\setlist{topsep=0.3em, itemsep=-0.3em}
\colorlet{darkishRed}{red!60!black}
\colorlet{darkishBlue}{blue!60!black}
\colorlet{darkishGreen}{green!50!black}
\colorlet{lightishGreen}{green!70!black}
\crefname{mainresult}{Theorem}{Theorems}
\let\setminus=\smallsetminus
\pgfplotsset{compat=1.18}
\newcommand{\COMMENT}[1]{{}}
\let\setminus=\smallsetminus
\renewcommand{\leq}{\leqslant}
\renewcommand{\geq}{\geqslant}
\renewcommand{\le}{\leq}
\let\rho=\varrho
\let\phi=\varphi
\newcommand{ \N } { \mathbb{N} }
\newcommand{\labtequtag}[3]{%
 \begin{equation} \label{#1} 	\begin{minipage}[c]{0.9\textwidth}  #2 \end{minipage} \ignorespacesafterend \tag{#3} \end{equation} }
\newcommand{\defn}[1]{{\color{darkishRed}{\emph{#1}}}}
\newcommand{\defnm}[1]{{\color{darkishRed}{#1}}}
\def\calCommandfactory#1{%
   \expandafter\def\csname c#1\endcsname{\mathcal{#1}}}
\def\frakCommandfactory#1{%
   \expandafter\def\csname frak#1\endcsname{\mathfrak{#1}}}
\newcounter{ctr}
  \edef\X{\@Alph\c@ctr}
  \edef\Y{\@alph\c@ctr}
\newtheorem{theorem}{Theorem}[section] 
\newtheorem{corollary}[theorem]{Corollary}
\newtheorem{lemma}[theorem]{Lemma}
\newtheorem{observation}[theorem]{Observation}
\crefname{observation}{Observation}{Observations}
\newtheorem{conjecture}[theorem]{Conjecture}
\crefname{conjecture}{Conjecture}{Conjectures}
\newtheorem{mainresult}{Theorem}
\crefname{mainresult}{Theorem}{Theorems}
\crefname{maincorollary}{Corollary}{Corollaries}
\crefname{mainconjecture}{Conjecture}{Conjectures}
\crefname{mainexample}{Example}{Examples}
\newenvironment{customthm}[1]
  {\innercustomthm}
  {\endinnercustomthm}
\newenvironment{customlem}[1]
  {\innercustomlem}
  {\endinnercustomlem}
\newtheorem{claim}{Claim}
\crefname{claim}{Claim}{Claims}
\newenvironment{claimproof}{\noindent\textit{Proof.}}{\hfill\ensuremath{\blacksquare}\medskip}
\theoremstyle{definition}
\newtheorem{construction}[theorem]{Construction}
\theoremstyle{remark}
\newcommand{\pdfOrNot}[2]{\ifbool{pdfBool}{{#1}}{{#2}}}
\newcommand{\arXivOrNot}[2]{\ifbool{arXiv}{{#1}}{{#2}}}
\newcommand{\dist}{d}
\newcommand{\td}{tree-decom\-posi\-tion}
\newcounter{mylabelcounter}
\newcommand{\labelText}[2]{%
#1\refstepcounter{mylabelcounter}%
\immediate\write\@auxout{%
  \string\newlabel{#2}{{1}{\thepage}{{\unexpanded{#1}}}{mylabelcounter.\number\value{mylabelcounter}}{}}%
}%
}
\tikzset{every picture/.style={line width=0.8pt}}
\newcommand\footnoteref[1]{\protected@xdef\@thefnmark{\ref{#1}}\@footnotemark}
\newcommand{\mylabel}[2]{#2\def\@currentlabel{#2}\label{#1}}
\newcounter{dummy}
\newcommand\myitem[1][]{\item[#1]\refstepcounter{dummy}\def\@currentlabel{#1}}
\def\namedlabel#1#2{\begingroup
   \def\@currentlabel{#2}%
   \label{#1}\endgroup
}
\DeclareMathOperator{\RR}{R}
\DeclareMathOperator{\SSS}{\mathbf{SS}}
\DeclareMathOperator{\TTT}{\mathbf{TT}}
\newcommand\thankssymb[1]{\textsuperscript{\@fnsymbol{#1}}}
\renewcommand\footnotemark{}
\begin{document}

\title{Counterexample to the conjectured coarse grid theorem}

\author{Sandra Albrechtsen$^{*{\includegraphics[height=.5\baselineskip]{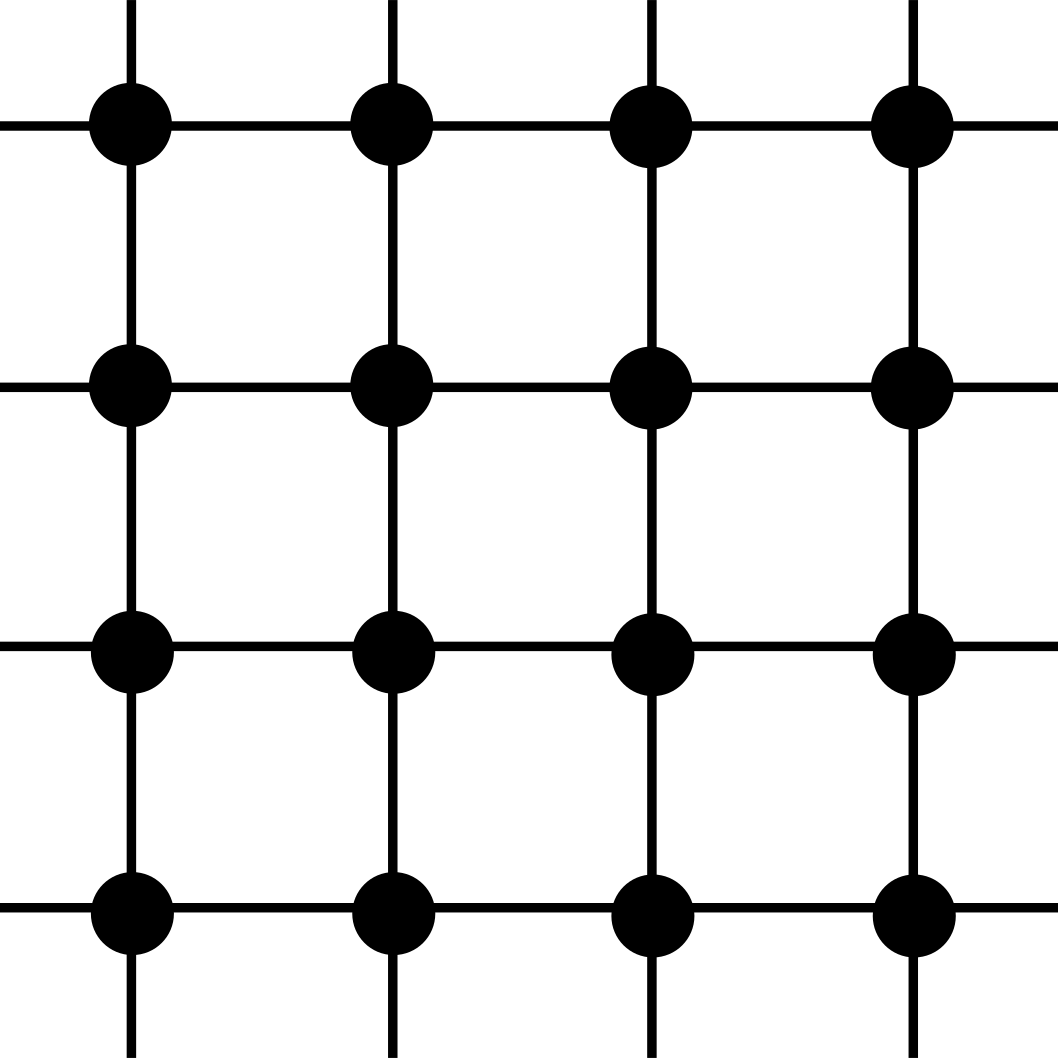}}}$}
\author{James Davies$^{*{\includegraphics[height=.5\baselineskip]{SmallGridHigh.png}}}$}

\affil{${\includegraphics[height=.5\baselineskip]{SmallGridHigh.png}}$ Institute of Mathematics, Leipzig University, Augustusplatz 10, 04109 Leipzig, Germany\\ sandra@albrechtsen-mail.de, jgdavies@uwaterloo.ca}

\date{\vspace{-5ex}}

\thanks{$^*$ Supported by the Alexander von Humboldt Foundation in the framework of the Alexander von Humboldt Professorship of Daniel Král' endowed by the Federal Ministry of Education and Research.}

\maketitle

\begin{abstract}
    We show that for every $M,A,n \in \mathbb{N}$ there exists a graph $G$ that does not contain the $(154\times 154)$-grid as a $3$-fat minor and is not $(M,A)$-quasi-isometric to a graph with no $K_n$ minor. This refutes the conjectured coarse grid theorem by Georgakopoulos and Papasoglu and the weak fat minor conjecture of Davies, Hickingbotham, Illingworth, and McCarty. 
    Our construction is a slight modification of the recent counterexample to the weak coarse Menger conjecture from Nguyen, Scott and Seymour.

    We further modify the construction to show that there are planar graphs that do not have the coarse Erd\H{o}s-P\'{o}sa property.
\end{abstract}

{\bf{Keywords:} } Coarse graph theory, graph-theoretic geometry, quasi-isometry, fat minors, counterexamples, grid theorem, Erd\H{o}s-P\'osa property

{\bf{MSC 2020 Classification:}} 51F30, 05C83, 05C10, 05C70

\section{Introduction}

Georgakopoulos and Papasoglu \cite{GP23} recently gave a systematic overview of the emerging area of coarse graph theory, which has received a lot of attention lately.
A key notion in this area is that of fat minors, which was introduced independently by Chepoi, Dragan, Newman, Rabinovich, and Vaxes \cite{FatK23Minor} and by Bonamy, Bousquet, Esperet, Groenland, Liu, Pirot, and Scott \cite{bonamy2023asymptotic}.
Fat minors provide a coarse analogue of graph minors and are inspired by connections to metric geometry.
It is hoped that classical theorems on graph minors could have natural coarse analogues, and this is one of the focuses of coarse graph theory \cite{GP23}.
We delay certain definitions such as that of fat minors, quasi-isometries, and tree-width to \Cref{sec:pre}.

Georgakopoulos and Papasoglu \cite{GP23} conjectured the following coarse analogue of the classical grid theorem of Robertson and Seymour \cite{robertson1986graph}.

\begin{conjecture}[Coarse Grid Theorem] \label{conj:CoarseGrid}
    Let $K,n \in \mathbb{N}$.
    Then, there exist some $M,A,g\in \mathbb{N}$ such that every graph with no $K$-fat $(n\times n)$-grid minor is $(M,A)$-quasi-isometric to a graph of tree-width at most $g$.
\end{conjecture}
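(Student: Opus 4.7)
The plan is to disprove \cref{conj:CoarseGrid} by exhibiting, for any prescribed $M, A, n \in \mathbb{N}$, a graph $G = G(M,A,n)$ that has no $3$-fat $(154 \times 154)$-grid minor yet is not $(M,A)$-quasi-isometric to any $K_n$-minor-free graph. This suffices because a graph of tree-width at most $g$ excludes $K_{g+2}$ as a minor, so any counterexample relative to the broader class of $K_n$-minor-free targets is in particular a counterexample relative to bounded-tree-width targets. As the abstract signals, the construction will be a mild modification of the Nguyen--Scott--Seymour counterexample to the weak coarse Menger conjecture.

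My first step would be to extract from the Nguyen--Scott--Seymour graphs the feature that obstructs coarse Menger: a tightly packed bundle of internally disjoint paths whose combinatorial arrangement cannot be replicated by any collection of coarsely disjoint paths. I would then argue that this same feature obstructs being quasi-isometric to a $K_n$-minor-free graph, essentially because the bundle encodes a $K_n$-minor at arbitrarily coarse scales: any $(M,A)$-quasi-isometry to a graph $H$ would transport the asymptotic clique structure to $H$, producing an honest $K_n$ minor in $H$ and contradicting its minor-freeness. This step should use the original construction essentially unchanged.

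The second step is the promised slight modification: surgically alter the construction so as to rule out a $3$-fat $(154 \times 154)$-grid minor while preserving the $K_n$-obstruction above. The heuristic is that the Nguyen--Scott--Seymour graphs are intrinsically one-dimensional path bundles, whereas a grid minor demands a genuinely two-dimensional lattice; local rerouting or subdivision should destroy any embedded lattice at scale $3$ without disturbing the one-dimensional clique witness. The specific constant $154$ ought to fall out of a combinatorial bound on how many branch sets of a prospective grid minor can coexist at pairwise distance $\ge 3$ inside the modified graph, together with an accounting of how many parallel paths are needed to carry the two "axis" directions of the grid.

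The main obstacle I anticipate is the decoupling of these two obstructions. In the classical theory, grid minors are the canonical source of clique minors, so one needs the $K_n$ minors in the construction to arise in a path-bundle rather than a lattice fashion; this is precisely the flavour the Nguyen--Scott--Seymour graphs have, and arguing that it survives the modification is where the real work lies. Once the two obstructions are shown to be compatible, absorbing the dependence on $M, A, n$ by scaling parameters in the construction should be routine bookkeeping.
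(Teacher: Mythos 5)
Your overall strategy coincides with the paper's: reduce bounded tree-width to $K_n$-minor-freeness, take a modified Nguyen--Scott--Seymour graph, and prove the two properties of \cref{main:Counterexample} separately. But both of your key steps are asserted rather than argued, and the heuristics you offer in their place would not carry the proof. For the $K_n$ obstruction, the claim that a quasi-isometry ``would transport the asymptotic clique structure to $H$'' is precisely what has to be proved, and the mechanism is not a direct transport of a clique: the paper first shows (via Menger's theorem applied in the \emph{target} $H$, combined with the fact that no $m-1$ balls far from the root separate $\SSS(G)$ from $\TTT(G)$ in $G$ --- \cref{lem:NoSmallSeparator}) that $H$ contains $m=n^2$ disjoint $f(\SSS(G))$--$f(\TTT(G))$ paths avoiding a ball around $f(\RR(G))$; it then exhibits $n(n-1)/2$ pairwise far-apart separators $S_i$ in $G$ whose images coarsely separate in $H$ (\cref{lem:QIPreservesSeps}), forcing every surviving path through a ball around each $f(S_i)$, discards the $O(n^2-n)$ paths that come near images of leaves, and builds branch paths from images of spines. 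Without the no-small-separator lemma the Menger step has no contradiction to run on, and your sketch contains no substitute for it.

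For the grid-exclusion step the gap is larger, and your proposed mechanism points the wrong way. The needed modification is not ``local rerouting or subdivision'': it is the \emph{deletion} of the connector paths among the vertices of each $V_i^{m-1}$ (and of $\SSS(G)$, $\TTT(G)$), and without this deletion the construction contains \emph{every} graph as a fat minor, so the original is not ``intrinsically one-dimensional.'' Moreover, ruling out a $3$-fat grid is not a local or dimensional argument: the paper builds an explicit tree-decomposition of $G_{h,d,m}$ whose bags and adhesion sets are unions of at most four sets $V_i^j$ plus boundedly many extra vertices, shows that a $3$-fat model of the $(154\times154)$-grid would produce a $3$-fat $77$-path-connected set that must be trapped (by an orientation/sink argument or by cutting across an adhesion set) so that some single $V_i^j$ receives $14$ of its vertices, and finally shows no $V_i^j$ contains a $3$-fat $7$-path-connected set --- the last step reducing to the original two-paths lemma of Nguyen--Scott--Seymour (\cref{lem:NoTwoPaths}). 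The constant $154$ falls out of this double pigeonhole, not out of packing branch sets at pairwise distance $3$. As written, your proposal identifies the destination but supplies neither of the two arguments that get there.
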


There had been a number of encouraging results related to this conjecture~\cite{abrishami2025coarse,albrechtsen2024menger,FatK4,berger2024bounded,chudnovsky2025coarse,GP23,hickingbotham2025graphs,nguyen2025coarse,nguyenasymptotic3}.
For instance, Albrechtsen, Jacobs, Knappe, and Wollan \cite{FatK4} characterized graphs quasi-isometric to graphs of tree-width at most 2 as exactly the graphs forbidding $K_4$ as a fat minor.
Nguyen, Scott, and Seymour \cite{nguyen2025coarse} and Hickingbotham \cite{hickingbotham2025graphs} independently proved that a graph is quasi-isometric to a graph of bounded tree-width if and only if it admits a tree-decomposition in which each bag is contained in the union of a
bounded number of balls of bounded radius.
Nguyen, Scott, and Seymour \cite{nguyenasymptotic3} even proved the coarse analogue of the path-width theorem~\cite{robertson1983graph}, that graphs forbidding some tree as a fat minor are quasi-isometric to graphs of bounded path-width.

On the other hand, there have been some related and now disproven conjectures in coarse graph theory~\cite{SmallCounterexFatMinorConj,CounterexAgelosPanosConjecture,nguyen2025counterexample,NewCounterexToCoarseMenger}. Albrechtsen, Huynh, Jacobs, Knappe, and Wollan \cite{albrechtsen2024menger} and Georgakopoulos and Papasoglu \cite{GP23} independently posed a coarse analouge of Menger's theorem where either there would be $k$ distant paths between vertex sets $X$ and $Y$ in a graph, or $k-1$ bounded radius balls hitting all paths between $X$ and $Y$. While both sets of authors~\cite{albrechtsen2024menger,GP23} proved the case $k=2$, their conjecture was disproved for $k \geq 3$ by Nguyen, Scott, and Seymour \cite{nguyen2025coarse}.
A modification of this construction was used by Davies, Hickingbotham, Illingworth, and McCarty \cite{CounterexAgelosPanosConjecture} to disprove Georgakopoulos and Papasoglu's \cite{GP23} fat minor conjecture that for every graph $H$, the graphs forbidding $H$ as a fat minor are quasi-isometric to graphs forbidding $H$ as a minor.
While their graph $H$ contained two (disjoint) cliques of size $15$, it was noted \cite{CounterexAgelosPanosConjecture} that $H$ can even be chosen to some (large) planar graph.
Albrechtsen, Distel, and Georgakopoulos \cite{SmallCounterexFatMinorConj} recently further showed that the fat minor conjecture is false even for several small graphs including the planar graph $H=K_{2,2,2}$ again using exactly Nguyen, Scott, and Seymour's \cite{nguyen2025coarse} counterexample to the coarse Menger conjecture. 
Still, there was hope for a weak fat minor conjecture where instead of being quasi-isometric to graphs forbidding $H$ as a minor, the graphs are quasi-isometric to graphs forbidding some possibly much larger graph $H'$ as a minor \cite{CounterexAgelosPanosConjecture}.

Nguyen, Scott, and Seymour \cite{NewCounterexToCoarseMenger} recently disproved the weak coarse Menger conjecture \cite{nguyen2025counterexample} (where either there would be $k$ distant paths or $f(k)$ bounded radius balls hitting all such paths) with a significant extension of their previous construction \cite{nguyen2025counterexample}.
We slightly modify their construction further to disprove both the coarse grid theorem of Georgakopoulos and Papasoglu \cite{GP23} (\Cref{conj:CoarseGrid}) in a strong sense (note that every graph containing $K_n$ as a minor has tree-width at least $n-1$), and also the weak fat minor conjecture of Davies, Hickingbotham, Illingworth, and McCarty \cite{CounterexAgelosPanosConjecture}.

\begin{mainresult} \label{main:Counterexample}
    For every $M, A, n \in \N$ with $M \geq 1$, there exists a graph $G$ with the following properties:
    \begin{enumerate}
        \item \label{itm:Counterex:NoGrid} The $(154 \times 154)$-grid is not a $3$-fat minor of $G$.
        \item \label{itm:Counterex:Kn} $G$ is not $(M,A)$-quasi-isometric to a graph with no $K_n$ minor.
    \end{enumerate}
\end{mainresult}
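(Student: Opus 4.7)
My plan is to build the required graph $G$ as a slight modification of the Nguyen--Scott--Seymour counterexample to the weak coarse Menger conjecture~\cite{NewCounterexToCoarseMenger}. That construction provides, for parameters $k$ and arbitrary $L$, a graph with two distinguished vertex sets $X,Y$ such that there are no $k$ pairwise $3$-apart $X$--$Y$ paths, yet $X$ and $Y$ cannot be separated by any bounded number of balls of bounded radius (the separation bound growing with $L$). I would fix $k$ to be at most $154$ and choose $L$ as a large linear function of $M+A$, then augment the construction so that the resulting graph has $n$ pairwise $L$-far ``terminal regions'' joined pairwise by copies (or identifications) of NSS's $X$--$Y$ gadgetry, thereby producing a robust $L$-fat $K_n$-like structure while preserving the ``no $k$ pairwise $3$-apart paths'' property between any two terminals.

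For~\ref{itm:Counterex:Kn}, the argument is the now-standard one that fat minors are preserved, with loss of fatness, under quasi-isometries: once $L$ exceeds a suitable linear function of $M+A$, any $(M,A)$-quasi-isometry from $G$ to a graph $H$ would transport the $L$-fat $K_n$ model in $G$ to a $1$-fat (i.e.\ ordinary) $K_n$ model in $H$, contradicting the $K_n$-minor-freeness of $H$. Thus, choosing the NSS parameter $L$ large enough in terms of $M$ and $A$ settles~\ref{itm:Counterex:Kn}, and is the easier of the two properties.

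For~\ref{itm:Counterex:NoGrid}, the idea is to derive a contradiction from a hypothetical $3$-fat $(154\times 154)$-grid minor of $G$: its $154$ columns give $154$ pairwise $3$-apart paths between two sides of the grid, and pigeonholing these across the $n$-way join would produce more pairwise $3$-apart $X$--$Y$ paths inside a single NSS gadget than the construction allows, contradicting the defining property of~\cite{NewCounterexToCoarseMenger}. The grid size $154$ is thus inherited directly from NSS's explicit upper bound on the number of pairwise $3$-apart paths in one gadget, plus a small slack introduced by the modification. The main obstacle, and the one I expect to be most delicate, is verifying that the ``slight modification'' does not create additional pairwise $3$-apart paths between terminal regions beyond those already controlled by NSS; this requires a careful local analysis at scale $3$ of how the gadgets are glued along their terminals, and it is where the explicit constant $154$ is actually forced rather than a smaller or larger value.
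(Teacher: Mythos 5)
There is a genuine gap, and it is in the core of your plan for~\ref{itm:Counterex:Kn}. You propose to build into $G$ an ``$L$-fat $K_n$-like structure'' and then push it forward through the quasi-isometry to obtain a $K_n$ minor in $H$. But an $L$-fat model of $K_n$ in $G$ with $L\geq 3$ is incompatible with~\ref{itm:Counterex:NoGrid} once $n\geq 154^2$: the $(154\times154)$-grid is a subgraph of $K_{154^2}$, so restricting a $3$-fat model of $K_n$ to the grid's vertices and edges yields a $3$-fat model of the grid in $G$. Since the theorem must hold for every $n$, your construction cannot simultaneously contain the fat $K_n$ structure you need for~\ref{itm:Counterex:Kn} and forbid the $3$-fat grid for~\ref{itm:Counterex:NoGrid}. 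This is precisely why the paper does \emph{not} place any fat $K_n$ in $G$. Instead, the $K_n$ minor is constructed directly in the target graph $H$: one applies Menger's theorem in (a contraction of) $H$ together with \cref{lem:NoSmallSeparator} (the surviving half of the coarse-Menger failure: no $m-1$ bounded balls separate $\SSS(G)$ from $\TTT(G)$ away from the root) to get $m=n^2$ disjoint $\SSS(H)$--$\TTT(H)$ paths in $H$; then \cref{lem:QIPreservesSeps} forces each such path to pass near the image of each of $\binom{n}{2}$ separators $S_i$, and after discarding paths near images of leaves, $n$ of these paths serve as branch sets with branch paths taken inside images of spines. The whole point of the counterexample is that $H$ is forced to contain structure that is \emph{not} fatly present in $G$; a push-forward of a fat minor cannot capture this.

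Your sketch for~\ref{itm:Counterex:NoGrid} is also substantially underpowered. The NSS gadget property is not ``at most $153$ pairwise $3$-apart $X$--$Y$ paths'' but the much stronger \cref{lem:NoTwoPaths}: any two $\SSS$--$\TTT$ paths avoiding the root come within distance $2$. A hypothetical $3$-fat grid does not hand you grid columns running between the terminals of a single gadget, so a direct pigeonhole over the ``$n$-way join'' does not localize the contradiction. The paper instead introduces $3$-fat $7$-path-connected sets, proves via \cref{lem:NoTwoPaths} that no gluing set $V_i^j$ contains one (\cref{lem:NoFatLinkedSet:Vi}), builds an explicit tree-decomposition whose bags and adhesion sets are essentially unions of four $V_i^j$'s plus $O(1)$ vertices, and then traps $14$ vertices of a grid row either in a sink bag or, after rerouting through the opposite half of the grid, in an adhesion set; the constant $154$ comes from this pigeonhole, not from NSS. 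Finally, note that the ``slight modification'' of NSS is not cosmetic: the dotted paths between terminals must be deleted because otherwise the construction contains every graph as a fat minor, which would destroy~\ref{itm:Counterex:NoGrid} outright.
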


We remark that we focus on the case that the grid is forbidden as a 3-fat minor since if a graph forbids $H$ as a $K$-fat minor, then it is quasi-isometric to a graph forbidding $H$ as a 3-fat minor \cite{CounterexAgelosPanosConjecture}.
We also optimized the proof for simplicity rather than for the size of the forbidden grid fat minor.
\medskip

The first application of Robertson and Seymour's grid theorem \cite{robertson1986graph} was to show that planar graphs have the Erd\H{o}s-P\'osa property, generalizing Erd\H{o}s and P\'osa's~\cite{posa1965independent} classical theorem for cycles. In other words, for every planar graph $H$, there is a function $f:\mathbb{N}\to \mathbb{N}$ such that a graph $G$ either contains $k$ disjoint copies of $H$ as a minor, or there exists some $X\subseteq V(G)$ with $|X|\le f(k)$ such that $G-X$ contains no $H$ minor. Planar graphs are the only graphs with the Erd\H{o}s-P\'osa property.

Georgakopoulos and Papasoglu \cite{GP23} introduced a coarse analogue of the Erd\H{o}s-P\'osa property and conjectured that cycles have the coarse Erd\H{o}s-P\'osa property.
As in \cite{robertson1986graph}, the conjectured coarse grid theorem \cite{GP23} would have straightforwardly implied the coarse analogue of Robertson and Seymour's \cite{robertson1986graph} theorem that a graph $H$ has the coarse Erd\H{o}s-P\'osa property if and only if $H$ is planar.
By further modifying the construction in \Cref{main:Counterexample}, we show that there are planar graphs (see \cref{fig:ErdosPosa:X}) that do not have the coarse Erd\H{o}s-P\'osa property.

\begin{mainresult} \label{main:ErdosPosaPlanar}
    There exists a planar graph $X$ such that for every $K,D,n \in \N$ there is a graph $G$ with the following properties:
    \begin{enumerate}[label=\rm{(\roman*)}]
        \item For every set $Z$ of at most $n$ vertices of $G$, there is a $K$-fat minor-model of $X$ in $G$ that avoids the ball~$B_{G}(Z,D)$ of radius $D$ around $Z$ in $G$.
        \item $G$ does not contain three $3$-fat minor-models of $X$ that are pairwise at distance at least $3$ from each other.
    \end{enumerate}
\end{mainresult}

One major problem on fat minors that remains open is whether classes forbidding a fat minor have bounded asymptotic dimension \cite{bonamy2023asymptotic}. For forbidding a grid as a fat minor, Georgakopoulos and Papasoglu \cite{GP23} conjecture a weakening of \Cref{conj:CoarseGrid} that such graphs have asymptotic dimension at most 1.
There are still several other weakening of \Cref{conj:CoarseGrid} that remain open, and we will discuss these further in \cref{sec:con}.
We will also discuss other possible variants of the coarse Erd\H{o}s-P\'osa property in \cref{sec:con}.

\section{Preliminaries}\label{sec:pre}

Our notions mainly follow~\cite{Bibel}; in particular, a set $U$ of vertices of a graph $G$ is \defn{connected} if $G[U]$ is connected.
We let $\defnm{[n]} := \{1, \dots, n\}$ for every $n \in \N$.

\subsection{Distances}

Let $G$ be a graph.
We write~\defn{$d_G(u, v)$} for the distance of the two vertices~$u$ and~$v$ in~$G$. 
For two sets~$U$ and~$U'$ of vertices of~$G$, we write~\defn{$d_G(U, U')$} for the minimum distance of two elements of~$U$ and~$U'$, respectively.
If one of~$U$ or~$U'$ is just a singleton, then we may omit the braces, writing $\dist_G(v, U') := \dist_G(\{v\}, U')$ for $v \in V(G)$.

Given a set~$U$ of vertices of~$G$, the \defn{ball (in~$G$) around~$U$ of radius $r \in \N$}, denoted as~\defn{$B_G(U, r)$}, is the set of all vertices in~$G$ of distance at most~$r$ from~$U$ in~$G$.
If~$U = \{v\}$ for some~$v \in V(G)$, then we omit the braces, writing~$B_G(v, r)$ for the ball (in $G$) around~$v$ of radius~$r$.

If $Y$ is a subgraph of $G$, then we abbreviate $\dist_G(U,V(Y))$ and $B_G(V(Y),r)$ as $d_G(U,Y)$, and $B_G(Y,r)$, respectively.

\subsection{Fat minors}\label{subsec:fatminors}

Let $G, X$ be graphs.
A \defn{model} $(\cU,\cE)$ of~$X$ in~$G$ is a collection $\cU$ of disjoint, connected sets $U_x \subseteq V(G)$ for vertices $x$ of~$X$, and a collection $\cE$ of internally disjoint $U_{x}$--$U_{y}$ paths~$E_{e}$ for edges $e=xy$ of~$X$ which are disjoint from every~$U_z$ with $z \neq x, y$. 
The $U_x$ are its \defn{branch sets} and the $E_e$ are its \defn{branch paths}.
A model $(\cU, \cE)$ of $X$ in $G$ is \defn{$K$-fat} for $K \in \N$ if $\dist_G(Y,Z) \geq K$ for every two distinct $Y,Z \in \cU \cup \cE$ unless $Y = E_e$ and $Z = U_x$ for some vertex $x \in V(X)$ incident to $e \in E(X)$, or vice versa.
Then $X$ is a \defn{($K$-fat) minor} of $G$ if $G$ contains a ($K$-fat) model of $X$.

\subsection{Quasi-isometries} \label{subsec:QI}

For~$M, A \in \N$ with $M \geq 1$, an \defn{$(M,A)$-quasi-isometry} from a graph~$G$ to a graph~$H$ is a map~$f\colon V(G) \to V(H)$ such that

\begin{enumerate}[label=(Q\arabic*)]
    \item \label{quasiisom:1} $M^{-1} \cdot \dist_G(u, v) - A \leq \dist_H(f(u),f(v)) \leq M \cdot \dist_G(u,v) + A$ for every~$u, v \in V(G)$, and
    \item \label{quasiisom:2} for every vertex $h$ of $H$, there exists a vertex $v$ of $G$ with $\dist_H(h,f(v)) \leq A$.
\end{enumerate}
We say that $G$ is \defn{$(M,A)$-quasi-isometric} to $H$ if there exists an $(M,A)$-quasi-isometry from $G$ to $H$.

The following fact about $(M,A)$-quasi-isometries is immediate from the definition, since for any edge~$uv$ of~$G$ it holds by \ref{quasiisom:1} that $d_H(f(u), f(v)) \leq M+A$. 

\begin{lemma} \label{lem:QIPreservesConn}
    Let $M \geq 1$, $A \geq 0$, and let $f$ be an $(M,A)$-quasi-isometry from a graph $G$ to a graph $H$. If $U \subseteq V(G)$ is connected, then $B_H(f(U), M+A)$ is connected. \qed
\end{lemma}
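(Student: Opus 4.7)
The plan is to lift the connectedness of $U$ in $G$ to connectedness of the vertex set $B_H(f(U), M+A)$ in $H$ using only the upper Lipschitz bound from~\ref{quasiisom:1}. First I would observe that for any edge $uv \in E(G)$ with $u, v \in U$, property~\ref{quasiisom:1} gives $d_H(f(u), f(v)) \leq M \cdot 1 + A = M+A$, so there exists a path $P_{uv}$ of length at most $M+A$ in $H$ from $f(u)$ to $f(v)$. Every vertex of $P_{uv}$ lies within distance $M+A$ of its endpoint $f(u) \in f(U)$, so $V(P_{uv}) \subseteq B_H(f(U), M+A)$.

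Next, for any two vertices $w, w' \in B_H(f(U), M+A)$, the definition of the ball supplies $u, u' \in U$ with $d_H(w, f(u)), d_H(w', f(u')) \leq M+A$, and shortest $w$--$f(u)$ and $w'$--$f(u')$ paths in $H$ consequently lie inside $B_H(f(U), M+A)$ by the same observation as above. Since $U$ is connected in $G$, I can pick a path $u = u_0 u_1 \dotsb u_k = u'$ in $G[U]$; concatenating the short paths $P_{u_i u_{i+1}}$ produced in the first step gives a walk inside $B_H(f(U), M+A)$ from $f(u)$ to $f(u')$. Gluing on the two initial short paths yields a walk from $w$ to $w'$ inside the ball, which establishes that $H[B_H(f(U),M+A)]$ is connected.

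There is no real obstacle here: the lower bound in~\ref{quasiisom:1} and axiom~\ref{quasiisom:2} are not needed at all, which matches the remark in the excerpt that the lemma is ``immediate from the definition''. The only subtlety worth double-checking in writing up is the conversion of the numerical distance bound $d_H(f(u), f(v)) \leq M+A$ into the set-theoretic statement that an \emph{entire} path between $f(u)$ and $f(v)$ lies inside $B_H(f(U), M+A)$, which is immediate because every vertex on a path of length at most $M+A$ lies within distance $M+A$ of each of its endpoints.
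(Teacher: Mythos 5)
Your proposal is correct and is exactly the argument the paper intends: the paper declares the lemma immediate from the observation that $d_H(f(u),f(v)) \leq M+A$ for every edge $uv$ of $G$, and you simply fill in the routine details of why this yields connectivity of the ball. No issues.
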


\cref{lem:QIPreservesConn} has the following converse:

\begin{lemma} \label{lem:QIPreservesConn:Converse}
    Let $M \geq 1$, $A \geq 0$, and let $f$ be an $(M,A)$-quasi-isometry from a graph $G$ to a graph $H$. Let $W \subseteq V(H)$ be connected, and for every $w \in W$, let $u_w$ be a vertex of $G$ with $d_H(f(u_w), w) \leq A$. Then $B_G(U, M(3A+1))$ is connected, where $U := \{u_w : w \in W\}$.
\end{lemma}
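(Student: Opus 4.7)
The plan is to prove the lemma by showing that (a) for every $w, w' \in W$ the vertices $u_w$ and $u_{w'}$ are connected in $B_G(U, M(3A+1))$, and (b) every vertex of $B_G(U, M(3A+1))$ is connected to some $u_w$ inside $B_G(U, M(3A+1))$.

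For (a), the first step is a one-line distance calculation. For any edge $ww'$ of $H[W]$, the triangle inequality gives $d_H(f(u_w), f(u_{w'})) \leq A + 1 + A = 2A+1$, so by the left inequality in \ref{quasiisom:1} we obtain $d_G(u_w, u_{w'}) \leq M(2A+1+A) = M(3A+1)$. Since $W$ is assumed connected (meaning $H[W]$ is connected, per the convention from \Cref{sec:pre}), for any two $w, w' \in W$ I can fix a $w$--$w'$ path $w = w_0 w_1 \dots w_k = w'$ in $H[W]$; then for each $i$ there is a $u_{w_i}$--$u_{w_{i+1}}$ path $P_i$ in $G$ of length at most $M(3A+1)$, and every vertex of $P_i$ lies in $B_G(u_{w_i}, M(3A+1)) \subseteq B_G(U, M(3A+1))$. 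Concatenating the $P_i$ yields a walk from $u_w$ to $u_{w'}$ in $B_G(U, M(3A+1))$.

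For (b), if $v \in B_G(U, M(3A+1))$, then by definition there is $w \in W$ with $d_G(v, u_w) \leq M(3A+1)$, and any shortest $v$--$u_w$ path in $G$ is contained in $B_G(u_w, M(3A+1)) \subseteq B_G(U, M(3A+1))$. Combining (a) and (b) gives that $B_G(U, M(3A+1))$ is connected. The only real content is the distance bookkeeping at the start; the remaining observations are routine, and the constant $M(3A+1)$ is exactly what is forced by \ref{quasiisom:1} and the triangle inequality, so no subtler argument is needed.
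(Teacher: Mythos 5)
Your proof is correct and follows essentially the same route as the paper: the same distance bound $d_G(u_w,u_{w'})\leq M(3A+1)$ for edges $ww'$ of $H[W]$ via \ref{quasiisom:1} and the triangle inequality, with the paper leaving the remaining connectivity bookkeeping (your steps (a) and (b)) implicit. No issues.
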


\noindent Note that such $u_w$ exists for every $w \in W$ by \ref{quasiisom:2}.

\begin{proof}
    Let $w, w' \in W$ such that $ww' \in E(H)$. Set $u := u_w$ and $u' := u_{w'}$. Then 
    \[
    d_G(u, u') \leq Md_H(f(u), f(u')) + MA \leq M(A+d_H(w,w')+A) + MA = M(2A+1) + MA = M(3A+1).
    \]
    Since $W$ is connected, it follows that $B_G(U, M(3A+1))$ is connected.
\end{proof}

The next lemma essentially says that quasi-isometries preserve separators (in a coarse sense).

\begin{lemma} \label{lem:QIPreservesSeps}
    Let $M \geq 1$, $A \geq 0$ and let $f$ be an $(M,A)$-quasi-isometry from a graph $G$ to a graph $H$. If $X \subseteq V(G)$ separates $Y \subseteq V(G)$ and $Z \subseteq V(G)$ in $G$, then $B_H(f(X), r)$ separates $f(Y)$ and $f(Z)$ in $H$ where $r := r(M,A) = M(M(3A+1)+2M^{-1}A+1)$.
\end{lemma}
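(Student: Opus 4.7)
The plan is to proceed by contradiction, lifting a witnessing path in $H$ back to a connected subset of $G$ that meets both $Y$ and $Z$ while avoiding $X$, in flat contradiction with the hypothesis that $X$ separates them in $G$. Concretely, suppose that $B_H(f(X), r)$ does not separate $f(Y)$ from $f(Z)$, and pick a path $P$ in $H$ from some $p = f(y) \in f(Y)$ to some $q = f(z) \in f(Z)$ with $V(P) \cap B_H(f(X), r) = \emptyset$. Since $f(X) \subseteq B_H(f(X), r)$, this automatically forces $y, z \notin X$, which will be essential at the very end.

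For the lift, I would use \ref{quasiisom:2} to pick for each $w \in V(P)$ a preimage $u_w \in V(G)$ with $d_H(f(u_w), w) \leq A$, and take $u_p := y$ and $u_q := z$ (both legal as $d_H(f(y), p) = d_H(f(z), q) = 0$). Applying \Cref{lem:QIPreservesConn:Converse} to the connected set $W := V(P)$ then produces a connected set $B_G(U, M(3A+1))$ in $G$ containing both $y \in Y$ and $z \in Z$, where $U := \{u_w : w \in W\}$.

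The main step, and the only place where the specific value of $r$ is used, is to verify that $B_G(U, M(3A+1))$ is disjoint from $X$. If instead some $x \in X$ were within $G$-distance $M(3A+1)$ of some $u_w$, then combining \ref{quasiisom:1} with the triangle inequality would give
\[
d_H(f(x), w) \leq M \cdot M(3A+1) + A + A = M^2(3A+1) + 2A = r - M < r,
\]
placing $w$ in $B_H(f(X), r)$ and contradicting the choice of $P$; this is precisely the calibration that forces the definition $r := M(M(3A+1) + 2M^{-1}A + 1)$. With $X$ avoided, $B_G(U, M(3A+1))$ is a connected subgraph of $G - X$ joining $y \in Y$ to $z \in Z$, contradicting that $X$ separates $Y$ from $Z$ in $G$. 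I do not anticipate any genuine obstacle: the only point requiring care is making sure the endpoints $y$ and $z$ themselves lie outside $X$, which is handled up front from $P$ being disjoint from $B_H(f(X), r) \supseteq f(X)$.
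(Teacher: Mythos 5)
Your proposal is correct and follows essentially the same route as the paper: lift the path $P$ to $G$ via \ref{quasiisom:2}, use \cref{lem:QIPreservesConn:Converse} to get the connected set $B_G(U, M(3A+1))$ containing $y$ and $z$, and then a distance estimate (you phrase it in contrapositive form, showing the ball misses $X$, while the paper bounds $d_G(Q,X)\ge 1$ for a path $Q$ inside the ball — the same inequality from \ref{quasiisom:1} and the same calibration of $r$). No gaps.
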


\begin{proof}
    Suppose for a contradiction that $B_H(f(X), r)$ does not separate $f(Y)$ and $f(Z)$ in $H$. Then there is an $f(Y)$--$f(Z)$ path $P$ in $H-B_H(f(X), r)$ with endvertices $y' \in f(Y)$ and $z' \in f(Z)$. 
    We pick for every $p \in V(P)$ some $q \in V(G)$ so that $d_H(p, f(q)) \leq A$, and let $U$ be the union of these vertices $q$. In particular, we pick for $y'$ some $y \in Y$ such that $y'=f(y)$ and for $z'$ some $z \in Z$ such that $z'=f(z)$.
    Then $B_G(U, M(3A+1))$ is connected by \cref{lem:QIPreservesConn:Converse}, so there is a $y$--$z$ path $Q$ in $G[B_G(U, M(3A+1))]$. Then 
    \begin{align*}
    d_G(Q,X) &\geq d_G(U, X) - M(3A+1)\\
    &\geq M^{-1}d_H(f(U), f(X)) - M^{-1}A - M(3A+1)\\
    &\geq M^{-1}(d_H(P, f(X))-A) - M^{-1}A-M(3A+1) \\
    &\geq M^{-1}r - 2M^{-1}A-M(3A+1) = 1
    \end{align*}
    and hence $Q$ avoids $X$. This contradicts that $X$ separates $Y$ and $Z$ in $G$.
\end{proof}

\subsection{Separations and tree-decompositions}

A \defn{separation} of a graph~$G$ is a pair $\{A,B\}$ of subsets $A,B$ of $V(G)$ such that $A \cup B = V(G)$ and there is no edge in $G$ between $A\setminus B$ and $B\setminus A$. We call $A, B$ the \defn{sides} of $\{A,B\}$.

A \defn{tree-decomposition} of a graph~$G$ is a pair $(T, \cV)$ that consists of a tree~$T$ and a family $\cV = (V_x)_{x \in V(T)}$ of subsets of~$V(G)$ indexed by the nodes of~$T$ such that:

\begin{enumerate}[label=(T\arabic*)]
    \item \label{TD1} $G = \bigcup_{x \in V(T)} G[V_x]$,
    \item \label{TD2} for every vertex $v$ of $G$, the subgraph of $T$ induced by $\{x \in V(T) : v \in V_x\}$ is connected. 
\end{enumerate}

\noindent Whenever a \td\ is introduced as $(T, \cV)$, we tacitly assume that $\cV = (V_x)_{x \in V(T)}$.

The sets $V_x$ are the \defn{bags} of the \td, and the sets $\defnm{V_e} := V_{x} \cap V_{y}$ for edges $e = xy$ of $T$ are the \defn{adhesion sets} of~$(T, \cV)$.
A \td\ has \defn{width} less than $k \in \N$ if all its bags have size at most~$k$, and a graph~$G$ has \defn{tree-width} at most~$k$ if it has a \td\ of width at most $k$.

In a \td\ $(T, \cV)$ of a graph~$G$ every edge~$e = xy$ of~$T$ induces a separation of~$G$ as follows.
Write~$T_x$ for the component of~$T - e$ containing~$x$ and~$T_y$ for the one containing~$y$.
Then~$\{A^x_e, A^y_e\}$ is a separation of~$G$ where $A^z_e := \bigcup_{u \in T_z} V_z$ for $z \in \{x,y\}$ \cite[Lemma 12.3.1]{Bibel}. We say that~$\{A_e^x, A_e^y\}$ is \defn{induced} by~$e$.
Note that $A^x_e \cap A^y_e = V_e$.

\section{Construction of the counterexample} \label{sec:Construction}

For the proof of \cref{main:Counterexample} we have to define some graph $G$ for every $M, A, n \in \N$. This graph $G$ will essentially be the graph used by Nguyen, Scott and Seymour \cite{NewCounterexToCoarseMenger} to disprove the weak coarse Menger conjecture, except for a small modification of not including certain paths in their construction. 

\begin{figure}[ht]
    \centering
    \begin{tikzpicture}[scale=0.49,auto=left]

\tikzstyle{every node}=[inner sep=1.5pt, fill=black,circle,draw]

\node[blue, ultra thick] (v33) at (33,0) {};
\node[blue,ultra thick] (v2) at (2,0) {};
\draw[dotted, thick] (v2)--(v33);
\node[red, ultra thick] (v3) at (3,0) {};
\node (v4) at (4,0) {};
\node (v5) at (5,0) {};
\node (v6) at (6,0) {};
\node (v7) at (7,0) {};
\node (v8) at (8,0) {};
\node (v9) at (9,0) {};
\node (v10) at (10,0) {};
\node (v11) at (11,0) {};
\node (v12) at (12,0) {};
\node (v13) at (13,0) {};
\node (v14) at (14,0) {};
\node (v15) at (15,0) {};
\node (v16) at (16,0) {};
\node (v17) at (17,0) {};
\node (v18) at (18,0) {};
\node (v19) at (19,0) {};
\node (v20) at (20,0) {};
\node (v21) at (21,0) {};
\node (v22) at (22,0) {};
\node (v23) at (23,0) {};
\node (v24) at (24,0) {};
\node (v25) at (25,0) {};
\node (v26) at (26,0) {};
\node (v27) at (27,0) {};
\node (v28) at (28,0) {};
\node (v29) at (29,0) {};
\node (v30) at (30,0) {};
\node (v31) at (31,0) {};
\node (v32) at (32,0) {};

\node[blue,ultra thick] (u2) at (2.5,1) {};
\node (u4) at (4.5,1) {};
\node (u6) at (6.5,1) {};
\node (u8) at (8.5,1) {};
\node (u10) at (10.5,1) {};
\node (u12) at (12.5,1) {};
\node (u14) at (14.5,1) {};
\node (u16) at (16.5,1) {};
\node (u18) at (18.5,1) {};
\node (u20) at (20.5,1) {};
\node (u22) at (22.5,1) {};
\node (u24) at (24.5,1) {};
\node (u26) at (26.5,1) {};
\node (u28) at (28.5,1) {};
\node (u30) at (30.5,1) {};
\node[blue,ultra thick] (u32) at (32.5,1) {};

\draw[dotted,thick] (u4) to [bend right=20] (v3);
\draw[dotted,thick] (u6) to [bend right=20] (v5);
\draw[dotted,thick] (u8) to [bend right=20] (v7);
\draw[dotted,thick] (u10) to [bend right=20] (v9);
\draw[dotted,thick] (u12) to [bend right=20] (v11);
\draw[dotted,thick] (u14) to [bend right=20] (v13);
\draw[dotted,thick] (u16) to [bend right=20] (v15);
\draw[dotted,thick] (u18) to [bend right=20] (v17);
\draw[dotted,thick] (u20) to [bend right=20] (v19);
\draw[dotted,thick] (u22) to [bend right=20] (v21);
\draw[dotted,thick] (u24) to [bend right=20] (v23);
\draw[dotted,thick] (u26) to [bend right=20] (v25);
\draw[dotted,thick] (u28) to [bend right=20] (v27);
\draw[dotted,thick] (u30) to [bend right=20] (v29);
\draw[dotted,thick] (u32) to [bend right=20] (v31);

\draw[dotted,thick] (u2) to [bend left=20] (v4);
\draw[dotted,thick] (u4) to [bend left=20] (v6);
\draw[dotted,thick] (u6) to [bend left=20] (v8);
\draw[dotted,thick] (u8) to [bend left=20] (v10);
\draw[dotted,thick] (u10) to [bend left=20] (v12);
\draw[dotted,thick] (u12) to [bend left=20] (v14);
\draw[dotted,thick] (u14) to [bend left=20] (v16);
\draw[dotted,thick] (u16) to [bend left=20] (v18);
\draw[dotted,thick] (u18) to [bend left=20] (v20);
\draw[dotted,thick] (u20) to [bend left=20] (v22);
\draw[dotted,thick] (u22) to [bend left=20] (v24);
\draw[dotted,thick] (u24) to [bend left=20] (v26);
\draw[dotted,thick] (u26) to [bend left=20] (v28);
\draw[dotted,thick] (u28) to [bend left=20] (v30);
\draw[dotted,thick] (u30) to [bend left=20] (v32);

\node (t3) at (3.5,2) {};
\node (t7) at (7.5,2) {};
\node (t11) at (11.5,2) {};
\node (t15) at (15.5,2) {};
\node (t19) at (19.5,2) {};
\node (t23) at (23.5,2) {};
\node (t27) at (27.5,2) {};
\node (t31) at (31.5,2) {};

\draw (u2) -- (t3)--(u4);
\draw (u6) -- (t7)--(u8);
\draw (u10) -- (t11)--(u12);
\draw (u14) -- (t15)--(u16);
\draw (u18) -- (t19)--(u20);
\draw (u22) -- (t23)--(u24);
\draw (u26) -- (t27)--(u28);
\draw (u30) -- (t31)--(u32);

\node (s5) at (5.5,3) {};
\node (s13) at (13.5,3) {};
\node (s21) at (21.5,3) {};
\node (s29) at (29.5,3) {};

\draw (t3) -- (s5)--(t7);
\draw (t11) -- (s13)--(t15);
\draw (t19) -- (s21)--(t23);
\draw (t27) -- (s29)--(t31);

\node (r9) at (9.5,4) {};
\node (r25) at (25.5,4) {};

\draw (s5) -- (r9)--(s13);
\draw (s21) -- (r25)--(s29);

\node (q17) at (17.5,5) {};
\draw (r9) -- (q17)--(r25);

\tikzstyle{every node}=[]
\draw (17.5,4.2) node []           {$r=\RR(G)$};
\draw[blue,below left] (v2) node []           {$s_1$};
\draw[blue,above left] (u2) node []  {$s_2$};
\draw[blue,below right] (v33) node []           {$t_1$};
\draw[blue,above right] (u32) node []  {$t_2$};
\draw[red,below] (v3) node [] {$V_2^1$};

\end{tikzpicture}
    \vspace{-2em}
    \caption{Depicted is the graph $G_{4,d,2}$. The dotted lines represent paths of length $d+1$.}
    \label{fig:Ghd2}
\end{figure}

For every $h, d, m \in \N$, with $m \geq 2$, we define a graph $G_{h,d,m}$. For $m = 2$ and $h, d \in \N$ the graph \defn{$G_{h,d,2}$} consists of a binary tree of height $h$, rooted in $r$ and denoted by \defn{$B(G_{h,d,2})$}, a path of length $(d+1)(2^{h+1}-1)$, called the \defn{base path}, and for every leaf of the binary tree two paths of length $d+1$, called \defn{spines}, connecting the leaf to the base path as indicated in \cref{fig:Ghd2}, except that we only add one path for the leftmost leaf and only one for the rightmost leaf.
We call the vertices of the base path of degree $1$ or $3$ the \defn{anchors}. We denote by~\defn{$V_i^1$} the singleton that contains the $i$-th anchor of the base path.
Note that there are $2^{h+1}$ anchors, and the subpath of the base path between any two consecutive anchors has length precisely $d+1$.
Let $s_1, s_2$ and $t_1, t_2$ be as in \cref{fig:Ghd2}. We let \defn{$\RR(G_{h,d,2})$} denote the root $r$ of the binary tree, which we also call the \defn{root} of $G_{h,d,2}$. Let $\defnm{\SSS(G_{h,d,2})} := \{s_1,s_2\}$ and $\defnm{\TTT(G_{h,d,2})} := \{t_1,t_2\}$.  

\begin{figure}[ht]
    \centering
    \begin{tikzpicture}[scale=0.49,auto=left]

\definecolor{dgreen}{rgb}{0,0.7,0}

\foreach \x in {2,4,6,8,10,12,14,16,18,20,22,24,26,28,30} {
\path [fill=gray,opacity=0.5]
(\x,0) to [bend right=20] (\x,2) to [bend right=20] (\x+2,2) to [bend right=20] (\x+2,0) to [bend right=20] (\x,0);
\draw[line width=0.7] (\x,0) to [bend right=20] (\x,2);
\draw[line width=0.7] (\x,2) to [bend right=20] (\x+2,2);
\draw[line width=0.7] (\x+2,2) to [bend right=20] (\x+2,0);
\draw[line width=0.7] (\x+2,0) to [bend right=20] (\x,0);
}

\foreach \x in {18,20,22} {
\path [fill=green,opacity=0.5]
(\x,0) to [bend right=20] (\x,2) to [bend right=20] (\x+2,2) to [bend right=20] (\x+2,0) to [bend right=20] (\x,0);
\draw[line width=0.7] (\x,0) to [bend right=20] (\x,2);
\draw[line width=0.7] (\x,2) to [bend right=20] (\x+2,2);
\draw[line width=0.7] (\x+2,2) to [bend right=20] (\x+2,0);
\draw[line width=0.7] (\x+2,0) to [bend right=20] (\x,0);
}

\tikzstyle{every node}=[inner sep=1.5pt, fill=black,circle,draw]

\node[blue,ultra thick] (v2) at (2,0) {};
\node[red] (v4) at (4,0) {};
\node (v6) at (6,0) {};
\node (v8) at (8,0) {};
\node (v10) at (10,0) {};
\node (v12) at (12,0) {};
\node (v14) at (14,0) {};
\node (v16) at (16,0) {};
\node[Orange,ultra thick] (v18) at (18,0) {};
\node[dgreen] (v20) at (20,0) {};
\node[dgreen] (v22) at (22,0) {};
\node[Orange,ultra thick] (v24) at (24,0) {};
\node (v26) at (26,0) {};
\node (v28) at (28,0) {};
\node (v30) at (30,0) {};
\node[blue,ultra thick] (v32) at (32,0) {};

\node[blue,ultra thick] (u2) at (2,2) {};
\node[red] (u4) at (4,2) {};
\node (u6) at (6,2) {};
\node (u8) at (8,2) {};
\node (u10) at (10,2) {};
\node (u12) at (12,2) {};
\node (u14) at (14,2) {};
\node (u16) at (16,2) {};
\node[Orange,ultra thick] (u18) at (18,2) {};
\node[dgreen] (u20) at (20,2) {};
\node[dgreen] (u22) at (22,2) {};
\node[Orange,ultra thick] (u24) at (24,2) {};
\node (u26) at (26,2) {};
\node (u28) at (28,2) {};
\node (u30) at (30,2) {};
\node[blue,ultra thick] (u32) at (32,2) {};

\node[blue, ultra thick] (t3) at (3,4) {};
\node (t7) at (7,4) {};
\node (t11) at (11,4) {};
\node (t15) at (15,4) {};
\node[Orange,ultra thick] (t19) at (19,4) {};
\node[Orange,ultra thick] (t23) at (23,4) {};
\node (t27) at (27,4) {};
\node[blue,ultra thick] (t31) at (31,4) {};

\draw[dotted,line width=1.4] (t3) to [bend left=20] (v6);
\draw[dotted,line width=1.4] (t3) to [bend left=20] (u6);
\draw[dotted,line width=1.4] (t7) to [bend left=20] (v10);
\draw[dotted,line width=1.4] (t7) to [bend left=20] (u10);
\draw[dotted,line width=1.4] (t11) to [bend left=20] (v14);
\draw[dotted,line width=1.4] (t11) to [bend left=20] (u14);
\draw[dotted,line width=1.4] (t15) to [bend left=20] (v18);
\draw[dotted,line width=1.4] (t15) to [bend left=20] (u18);
\draw[dotted,line width=1.4] (t19) to [bend left=20] (v22);
\draw[dotted,line width=1.4] (t19) to [bend left=20] (u22);
\draw[dotted,line width=1.4] (t23) to [bend left=20] (v26);
\draw[dotted,line width=1.4] (t23) to [bend left=20] (u26);
\draw[dotted,line width=1.4] (t27) to [bend left=20] (v30);
\draw[dotted,line width=1.4] (t27) to [bend left=20] (u30);

\draw[dotted,line width=1.4] (t7) to [bend right=20] (v4);
\draw[dotted,line width=1.4] (t7) to [bend right=20] (u4);
\draw[dotted,line width=1.4] (t11) to [bend right=20] (v8);
\draw[dotted,line width=1.4] (t11) to [bend right=20] (u8);
\draw[dotted,line width=1.4] (t15) to [bend right=20] (v12);
\draw[dotted,line width=1.4] (t15) to [bend right=20] (u12);
\draw[dotted,line width=1.4] (t19) to [bend right=20] (v16);
\draw[dotted,line width=1.4] (t19) to [bend right=20] (u16);
\draw[dotted,line width=1.4] (t23) to [bend right=20] (v20);
\draw[dotted,line width=1.4] (t23) to [bend right=20] (u20);
\draw[dotted,line width=1.4] (t27) to [bend right=20] (v24);
\draw[dotted,line width=1.4] (t27) to [bend right=20] (u24);
\draw[dotted,line width=1.4] (t31) to [bend right=20] (v28);
\draw[dotted,line width=1.4] (t31) to [bend right=20] (u28);

\node (s5) at (5,6) {};
\node (s13) at (13,6) {};
\node[dgreen,ultra thick] (s21) at (21,6) {};
\node (s29) at (29,6) {};

\draw (t3) -- (s5)--(t7);
\draw (t11) -- (s13)--(t15);
\draw (t19) -- (s21)--(t23);
\draw (t27) -- (s29)--(t31);

\node (r9) at (9,8) {};
\node (r25) at (25,8) {};

\draw (s5) -- (r9)--(s13);
\draw (s21) -- (r25)--(s29);

\node (q17) at (17,10) {};
\draw (r9) -- (q17)--(r25);

\draw[green,opacity=0.4,line width=4] (t19)--(s21)--(t23);
\draw[green,opacity=0.4,line width=4] (t19) to [bend left=20] (v22);
\draw[green,opacity=0.4,line width=4] (t19) to [bend left=20] (u22);
\draw[green,opacity=0.4,line width=4] (t23) to [bend right=20] (v20);
\draw[green,opacity=0.4,line width=4] (t23) to [bend right=20] (u20);

\tikzstyle{every node}=[]
\draw (17,9.1) node []           {$r=\RR(G)$};
\draw[blue,below left] (v2) node []           {$s_1$};
\draw[blue,above left] (u2) node []  {$s_2$};
\draw[blue,above left] (t3) node [] {$s_3$};
\draw[blue,below right] (v32) node []           {$t_1$};
\draw[blue,above right] (u32) node []  {$t_2$};
\draw[blue,above right] (t31) node [] {$t_3$};

\draw (3,1) node [] {\large $H_1^2$};
\draw[red] (4,-0.65) node [] {$V_2^2$};

\draw[dgreen,above left] (s21) node [] {$x$};
\draw[dgreen] (23.5,5) node [] {$\Delta(x)$}; 
\draw[Orange] (18,-0.65) node [] {$\SSS(\Delta(x))$};
\draw[Orange] (24,-0.65) node [] {$\TTT(\Delta(x))$};

\end{tikzpicture}
    \vspace{-2em}
    \caption{Depicted is the graph $G_{3,d,3}$, except that $r$ should be identified with the roots of the $H_i^2$'s. The dotted lines represent paths of length $d+1$. The green subgraph is $\Delta(x)$.}
    \label{fig:Ghdm}
\end{figure}

Let us now define \defn{$G_{h,d,m}$} for $m > 2$ and $h, d \in \N$ (see \cref{fig:Ghdm}). We start by taking $n: = 2^{h+1}-1$ disjoint copies $H_1^{m-1}, \dots, H_n^{m-1}$ of $G_{h,d,m-1}$. Then for every $i < n$, we identify the vertices in $\TTT(H_i^{m-1})$ with their `corresponding' vertices in $\SSS(H_{i+1}^{m-1})$. We denote this set of identified vertices by \defn{$V_{i+1}^{m-1}$}, and the resulting graph by $H$. Then $H$ is basically a `path of $G_{h,d,m-1}$'s of length $n$', and the $V_i^{m-1}$ are the `gluing points'. We then add a binary tree of height $h$, rooted in $r$, disjoint from $H$ and denoted by \defn{$B(G_{h,d,m})$}, and for every leaf of the binary tree, we add $2m-2$ paths of length~$d+1$, called \defn{spines}, from the leaves of~$B(G_{h,d,m})$ to $H$ as indicated in \cref{fig:Ghdm}. The graph $G_{h,d,m}$ is then obtained by identifying the roots of the~$H^{m-1}_i$ with $r$, and we call $r$ the \defn{root} of $G_{h,d,m}$.
Phrased differently, $G_{h,d,m}$ is obtained from $G_{h,d,2}$ by replacing each subpath of the base path between two anchors by a copy of $G_{h,d,m-1}$, and then identifying the roots.

Let $s_0, \dots, s_m$ and $t_0, \dots t_m$ be as in \cref{fig:Ghdm}, i.e.\ $s_0, \dots, s_{m-1}$ are the vertices in $\SSS(H_1^{m-1})$ and~$s_m$ is the leftmost leaf of $B(G_{h,d,m})$. We let \defn{$\RR(G_{h,d,m})$} denote the root $r$ of the binary tree, and we let $\defnm{\SSS(G_{h,d,m})} := \{s_1, \dots, s_m\}$ and $\defnm{\TTT(G_{h,d,m})} := \{t_1, \dots, t_m\}$. We also define $\defnm{V_1^{m-1}} := \{s_1, \dots, s_{m-1}\}$ and $\defnm{V_{n+1}^{m-1}} := \{t_1, \dots, t_{m-1}\}$. 
We denote by $H^j_i$, for $j \leq m-2$ and $i \leq n^{m-j+1}$, the copies of $G_{h,d,j}$ inside the~$H_i^{m-1}$, enumerated from left to right. Moreover, we denote by \defn{$V_i^{j}$}, for $j \leq m-2$ and $1 \leq i \leq n^{m-j+1}+1$, the sets $V_k^j$ inside the $H^j_\ell$, enumerated from left to right, e.g.\ for $j = m-2$, there are $n^2+1$ $V_i^{m-1}$'s, of which $n(n-1)$ are contained `strictly inside' some $H^{m-1}_k$ and the others are each a subset of some $V^{m-1}_k$.

Then for every $\ell, m \in \N$, the graph $G_{2\ell+2, 2\ell, m}$ is almost an $(\ell, m)$-block from \cite{NewCounterexToCoarseMenger}, except that we never added the dotted paths between vertices in $\SSS(G)$ or in $\TTT(G)$, and similarly, the vertices in the $V_i^{m-1}$ are also not connected by dotted paths.
This modification is necessary since otherwise the construction contains every graph as a fat minor.

For every vertex $x$ of the binary tree at level $\ell$, let $\Delta = \defnm{\Delta(x)}$ be the subgraph of $G_{h,d,m}$ consisting of all vertices of the binary tree~$B(G)$ below~$x$ together with all~$H^{m-1}_i$ that are joined to this subtree of~$B(G)$ via spines, except for the leftmost and rightmost such~$H^{m-1}_i$ (see \cref{fig:Ghdm} for an example). Note that~$\Delta$ is almost isomorphic to $G_{h-\ell, d, m}$, except that its root $x$ was not identified with the roots of the $H_i^{m-1}$ contained in~$\Delta$. We denote by \defn{$\RR(\Delta)$} the root $x$ of $\Delta$, and we let \defn{$\SSS(\Delta)$} and \defn{$\TTT(\Delta)$} be the set of vertices of~$\Delta$ corresponding to $\SSS(G_{h-\ell, d, m})$ and $\TTT(G_{h-\ell, d, m})$, respectively (see \cref{fig:Ghdm}).
\medskip

We need the following two observations about $G_{h,d,m}$, which follow easily from the construction:

\begin{observation} \label{obs:BlackDotsAreFarApart}
    Let $h,d, m \in \N$ with $m \geq 2$. Then any two vertices in $\bigcup_{j \leq m-1} \bigcup_{i \leq 2^{h+1}j} V^{j}_i$ are at least $2d+2$ apart in $G_{h,d,m}$. \qed
\end{observation}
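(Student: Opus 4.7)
I would prove the observation by induction on $m$, using the recursive structure of $G_{h,d,m}$.

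For the base case $m = 2$, the set $\bigcup_i V_i^1$ consists of the $2^{h+1}$ anchor singletons on the base path of $G_{h,d,2}$. For two distinct anchors $a$ and $b$, I would classify the possible $a$--$b$ paths: any path remaining on the base path traverses an integer number of subpaths of length $d+1$ and hence contributes at least $2(d+1) = 2d+2$ for appropriately separated anchor pairs, while any path that leaves the base path must ascend a spine of length $d+1$, cross part of the binary tree, and descend another spine of length $d+1$, yielding length at least $2(d+1) = 2d+2$. A direct case analysis along these lines, together with the fact that any spine and any base-path subpath between consecutive anchors has length exactly $d+1$, gives the bound.

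For the inductive step with $m \geq 3$, recall that $G_{h,d,m}$ is obtained from $n = 2^{h+1}-1$ copies $H_1^{m-1}, \dots, H_n^{m-1}$ of $G_{h,d,m-1}$ glued at the sets $V_i^{m-1}$, together with a top-level binary tree $B(G_{h,d,m})$ of height $h$ rooted at $r$ and with spines of length $d+1$ from its leaves to the sub-copies. For two distinct vertices $u \in V_i^j$ and $v \in V_{i'}^{j'}$ in $G_{h,d,m}$, I would consider two cases. First, if $u$ and $v$ lie in the same sub-copy $H_k^{m-1}$, apply the inductive hypothesis, noting that any potential shortcut leaving $H_k^{m-1}$ via the top-level tree must traverse two spines of length $d+1$ each and thus accumulates at least $2(d+1)$ outside of $H_k^{m-1}$, so it cannot beat the internal distance. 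Second, if $u$ and $v$ lie in different sub-copies or at distinct gluing points, then any $u$--$v$ path must either traverse at least one full intermediate sub-copy (whose diameter is at least $2d+2$ by the base path length $(d+1)(2^{h+1}-1)$) or take an up-and-down route through the top-level binary tree, again contributing at least $2(d+1) = 2d+2$ from two spine traversals.

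The main obstacle will be the careful enumeration of shortcut routes in the inductive step, particularly through the top-level binary tree and through the identified roots of the various sub-copies' binary trees (all identified with $r$). The key structural invariant is that leaving and re-entering any ``region'' of the construction (a sub-copy $H_k^{m-1}$, the top-level tree, a base path) always incurs a cost of at least two full subpaths of length $d+1$, giving the required $2(d+1) = 2d+2$ in every case.
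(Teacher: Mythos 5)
Your base case already fails as written. For $m=2$ the set in the observation consists of \emph{all} $2^{h+1}$ anchors $V_1^1,\dots,V_{2^{h+1}}^1$, and the construction explicitly makes the subpath of the base path between two consecutive anchors have length exactly $d+1$; hence two consecutive anchors are at distance exactly $d+1<2d+2$ (every route leaving the base path is longer, not shorter). Your hedge ``for appropriately separated anchor pairs'' concedes precisely this, but the statement quantifies over all pairs, so this is a gap, not a proof step; the same problem recurs at level $j=1$ inside every $G_{h,d,m}$, since the union in the observation includes $j=1$. (For comparison: the paper gives no argument at all here --- the observation is asserted to follow from the construction --- and in its applications only pairs coming from the top-level sets $V_i^{m-1}$ are used, so the literal statement is broader than what is needed; but your write-up asserts the bound for exactly the pairs where it is false.)

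The ``key structural invariant'' driving your inductive step is also false, because it ignores the binary trees and the identification of their roots. The topmost vertex of a gluing set $V_i^{m-1}$ is the extreme leaf of the binary tree of the copy $H_i^{m-1}$, whose root is identified with the global root $\RR(G)$, and the topmost vertex of $V_{i+1}^{m-1}$ is the opposite extreme leaf of that same tree; they are joined through the tree, via $\RR(G)$, by a path of length $2h$ that uses no spine and no base-path segment. So it is not true that leaving and re-entering a region always costs two subpaths of length $d+1$, and it is not true that traversing an intermediate sub-copy costs at least the base-path length: what you need there is a lower bound on the $\SSS$--$\TTT$ distance inside the sub-copy (not its ``diameter''), and the tree route bounds that distance above by $2h$, which is smaller than $2d+2$ whenever $h\le d$. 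Consequently your case analysis cannot close; any correct argument must either restrict the statement to the pairs and parameter ranges actually used (as in \cref{lem:KnMinors}, where $h=d+2$, or the weaker $d+1$-type bounds used in \cref{lem:NoSmallSeparator}) or add a hypothesis such as $h\ge d+1$, and in all cases must explicitly account for shortcuts through the shared root $\RR(G)$, which your enumeration of routes never does.
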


\begin{observation} \label{obs:SubtrianglesSeparateST}
    Let $h,d, m \in \N$ with $m \geq 2$ and $G:=G_{h,d,m}$. For every $L \in \N$ and every vertex $x$ on the $L$-th level of the binary tree $B(G)$, the set $V(\SSS(\Delta(x)) \cup B_G(\RR(G),L)$ separates $\SSS(G)$ from $\TTT(G)$. \qed
\end{observation}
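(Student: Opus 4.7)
The plan is to prove the separation claim by induction on $m$, exploiting the recursive left-right structure of $G = G_{h,d,m}$. The key structural fact is that $\Delta(x)$ is a connected subgraph whose only attachments to $G - V(\Delta(x))$ are (i) the root $x$ of $\Delta(x)$, which links upward into the binary tree $B(G)$, and (ii) the left interface $\SSS(\Delta(x))$ and the right interface $\TTT(\Delta(x))$ into the base structure of $G$. Since $x$ lies on level $L$ of $B(G)$, we have $\dist_G(\RR(G),x) = L$ and hence $x \in B_G(\RR(G),L)$.

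For the base case $m=2$, let $P$ be any $\SSS(G)$--$\TTT(G)$ path in $G$. Split into two cases based on whether $P$ visits some vertex of $B(G)$ at level at most $L$. If it does, that vertex already lies in $B_G(\RR(G),L)$ and we are done. Otherwise $P$ avoids the top $L+1$ levels of $B(G)$, so $P$ is confined to the subgraph formed by the subtrees $B(\Delta(y))$ rooted at level-$L$ vertices $y$ (with their roots $y$ themselves removed, since $y \in B_G(\RR(G),L)$), together with all spines and the base path. Since $\SSS(G)$ sits to the left of the base subpath of $\Delta(x)$ and $\TTT(G)$ sits to the right, and since $P$ cannot cross $B(G)$ above level $L+1$, the only route from one side to the other is through the base path; but the base subpath of $\Delta(x)$ separates the left portion of the base path from the right portion, so $P$ must traverse it and in particular pass through its leftmost anchor, which is precisely the base-path element of $\SSS(\Delta(x))$.

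For the inductive step $m > 2$, the same dichotomy applies with the top-level binary tree $B(G_{h,d,m})$ in place of $B(G_{h,d,2})$. If $P$ uses a vertex of $B(G_{h,d,m})$ at level at most $L$, it hits $B_G(\RR(G),L)$ and we are done. Otherwise $P$ has to cross from left to right through the chain of $H_i^{m-1}$'s glued at their roots to $r$ and glued to each other along the sets $V_i^{m-1}$. Here we invoke the induction hypothesis inside the relevant $H_i^{m-1}$: because $H_i^{m-1}$ meets the remainder of $G$ only at $r$ and at its two ``gluing sets'' $V_i^{m-1}, V_{i+1}^{m-1}$, shortest paths from $\RR(G)$ into $H_i^{m-1}$ stay inside $H_i^{m-1}$, so $B_G(\RR(G),L) \cap V(H_i^{m-1}) = B_{H_i^{m-1}}(\RR(H_i^{m-1}),L)$; moreover the portion of $\SSS(\Delta(x))$ lying in $H_i^{m-1}$ is exactly the $\SSS$-set of the sub-$\Delta$ cut out by $x$ inside this copy. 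The induction hypothesis applied to $H_i^{m-1}$ then forces $P$ to use either a vertex of $B_G(\RR(G),L)$ or a vertex of $\SSS(\Delta(x))$.

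The main obstacle I anticipate is the bookkeeping around the recursion: one must carefully check that the ``sub-$\Delta$'' of $x$ inside each $H_i^{m-1}$ crossed by $P$ really is a $\Delta$ of the form treated by the induction hypothesis, that its $\SSS$-set corresponds (in the identifications used in the construction) to the appropriate subset of $\SSS(\Delta(x))$, and that the shared gluing sets $V_i^{m-1}$ are handled correctly when $P$ crosses from one copy of $G_{h,d,m-1}$ to the next. Once this is verified, the observation follows from the clean left-right crossing argument outlined above.
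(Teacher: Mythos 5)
Your argument has a genuine gap at exactly the point that makes this observation non-trivial: you never use, or even mention, the binary-tree leaf that belongs to $\SSS(\Delta(x))$. Recall that $\SSS(\Delta(x))$ is not just the ``left interface into the base structure'': it consists of a gluing set $V_{i_0}^{m-1}$ (a single anchor when $m=2$) \emph{together with} the leftmost leaf $\lambda$ of the subtree of $B(G)$ rooted at $x$. Your base case asserts that a path avoiding $B_G(\RR(G),L)$ ``is confined to \dots the base path'' and ``must traverse it and in particular pass through its leftmost anchor''. That is false: a path can leave the base path at an anchor to the left of $V^1_{i_0}$, climb a spine to a leaf of $B(G)$ (which sits at level $h>L$ and is therefore not in the ball), and descend the leaf's other spine to an anchor to the right of $V^1_{i_0}$ --- the spines of each leaf deliberately ``jump over'' intermediate anchors (this is exactly the feature exploited in the proof of \cref{lem:NoSmallSeparator}). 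So your argument would equally ``prove'' that the anchor alone plus the ball separates, which is not true. A correct proof must observe (i) that the unique leaf whose two spine attachments straddle $V_{i_0}^{m-1}$ is precisely $\lambda\in\SSS(\Delta(x))$, so this bypass is blocked, and (ii) that any path inside $B(G)$ between a leaf whose spines attach to the left of $V_{i_0}^{m-1}$ and a leaf whose spines attach to its right passes through a common ancestor of those leaves, which is an ancestor of $x$ and hence lies at level at most $L$, i.e.\ in $B_G(\RR(G),L)$. Neither point appears in your write-up.

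The inductive step is also structurally off. The vertex $x$ lies in the top-level tree $B(G)$, not in any copy $H_i^{m-1}$, so there is no ``sub-$\Delta$ cut out by $x$ inside this copy'', and $\SSS(\Delta(x))\cap V(H_i^{m-1})$ is a whole gluing set $V_{i_0}^{m-1}=\TTT(H_{i_0-1}^{m-1})=\SSS(H_{i_0}^{m-1})$ plus nothing of $H_i$'s internal tree; the induction hypothesis you invoke does not apply to this configuration. In fact no induction on $m$ is needed: crossing the chain of $H_i^{m-1}$'s directly requires a vertex of $V_{i_0}^{m-1}\cup\{\RR(G)\}$ simply because consecutive copies meet only in their gluing set and the identified root, and the only other way across is through $B(G)$ via spines, which is handled by points (i) and (ii) above. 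With those two points added the observation does follow easily (the paper states it without proof for this reason), but as written your proof misses the very reason the leaf is part of $\SSS(\Delta(x))$.
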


In the next two sections, \cref{sec:NoFatGrids,sec:KnMinors}, we will prove the following two statements about the graphs~$G_{h,d,m}$, which together easily imply \cref{main:Counterexample}:

\begin{lemma} \label{lem:NoFatGrids}
    The $(154 \times 154)$-grid is not a $3$-fat minor of $G_{h,d,m}$ for any $h,d,m \in \N$.
\end{lemma}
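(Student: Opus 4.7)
I would argue by contradiction. Suppose $(\cU,\cE)$ is a $3$-fat model of the $(154\times 154)$-grid $\Gamma$ in $G := G_{h,d,m}$. The grid contains $154$ pairwise vertex-disjoint ``column'' paths connecting its top row to its bottom row, so the model contains $154$ walks $W_1, \ldots, W_{154}$ in $G$ which, by the $3$-fat condition applied to the pairwise disjoint branch sets and branch paths they traverse, are pairwise at distance at least $3$.

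The plan is then to derive a contradiction using the separators furnished by \cref{obs:SubtrianglesSeparateST}: for every level $L$ and every vertex $x$ at level $L$ of the top-level binary tree $B(G)$, the set $S_{L,x} := V(\SSS(\Delta(x))) \cup B_G(r,L)$ separates $\SSS(G)$ from $\TTT(G)$ in $G$. After an initial orientation step positioning the grid inside $G$ so that the $W_i$'s run between the $\SSS(G)$-side and the $\TTT(G)$-side of such a separator, every $W_i$ must intersect $S_{L,x}$. Since the $W_i$'s are pairwise $3$-far, each vertex of $S_{L,x}$ lies on at most one walk.

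Now I would use the tree-like structure of $G$ to bound the number of such crossings. The piece $\SSS(\Delta(x))$ has size $m$, while the piece $B_G(r,L)$ is a cone around $r$ in the top binary tree with attached spine fragments. Two walks crossing $B_G(r,L)$ via different branches of $B(G)$ are forced to come within distance $<3$ near $r$ (for $L$ small enough), so only boundedly many pairwise $3$-far walks can traverse $B_G(r,L)$. Together with \cref{obs:BlackDotsAreFarApart}, which spaces the special vertices $V_i^j$ apart so that walks cannot ``double-dip'' into both pieces of $S_{L,x}$, this should give a cap strictly less than $154$ on the number of walks, contradicting the assumption.

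The main obstacle I expect is uniformity in $m$: the size of $\SSS(\Delta(x))$ grows with $m$, so the naive bound $m + |B_G(r,L)|$ is not enough. Likely the proof chooses $x$ at a deeper level of the recursion (so that $\SSS(\Delta(x))$ and $B_G(r,L)$ are far apart and each contributes only a small, $m$-independent number of $3$-fat-admissible walks), or iterates the separator argument, peeling off walks through one $\SSS(\Delta(x))$ at a time in a recursive manner until only a fixed number remain. Getting this uniform bound exactly right, with the specific constant $154$, is the principal technical step.
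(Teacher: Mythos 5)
There is a genuine gap, and it sits exactly where you placed your ``initial orientation step''. A $3$-fat model of the $(154\times 154)$-grid can sit anywhere in $G=G_{h,d,m}$ --- for instance deep inside a single copy $H^j_i$ --- so there is no way to arrange that your $154$ column walks run between the two sides of a prescribed separator $\SSS(\Delta(x))\cup B_G(\RR(G),L)$, let alone between $\SSS(G)$ and $\TTT(G)$. Note that if you \emph{could} force even three pairwise $3$-far walks to join $\SSS(G)$ to $\TTT(G)$, then \cref{lem:NoTwoPaths} would already give a contradiction (at most one of them avoids $\RR(G)$, and walks through $\RR(G)$ meet), so this positioning step is not a preliminary normalisation --- it carries essentially all of the difficulty. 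Your second difficulty is the one you flagged yourself: the crossing bound fails uniformly in $m$. By \cref{obs:BlackDotsAreFarApart} the $m$ vertices of $\SSS(\Delta(x))$ are pairwise at distance at least $2d+2$, so up to $m$ pairwise $3$-far walks can cross such a separator, and neither choosing $x$ deeper nor ``peeling off'' walks helps, because the construction is self-similar and every separator of this type again contains a set $V^j_i$ of size growing with $m$. So no argument that merely counts how many $3$-far walks can traverse one separator can reach a bound like $154$ independent of $m$.

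The paper's resolution is of a different kind. It never bounds the number of walks crossing a separator. Instead it (i) proves, by applying \cref{lem:NoTwoPaths} recursively inside the copies $H^{j}_{i}$, that no $V^j_i$ contains a $3$-fat $7$-path-connected set (\cref{lem:NoFatLinkedSet:Vi}); (ii) constructs a tree-decomposition of $G$ (\cref{constr:TreeDecomp2}) whose bags are unions of four $V^j_i$'s plus boundedly many extra vertices and whose relevant adhesion sets are unions of two $V^j_i$'s plus two extra vertices; and (iii) takes one vertex from each branch set of the $77$-th row of the hypothetical grid model, which by \cref{obs:LinesInGridAreNConn} is a $3$-fat $77$-path-connected set, and traps many of these vertices either in a sink bag (via an orientation argument on the tree edges) or, in the balanced case, in an adhesion set, using the upper and lower halves of the grid to keep the trapped subset fat path-connected. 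Pigeonhole over the at most four $V^j_i$'s then puts at least $14$ of these vertices in one $V^j_i$, contradicting \cref{lem:NoFatLinkedSet:Vi}. The uniformity in $m$ thus comes from the interplay of the grid row's fat path-connectivity with \cref{lem:NoTwoPaths}, not from any cap on crossings; to repair your argument you would need to import both the path-connectivity notion and the tree-decomposition trapping step, at which point you have the paper's proof.
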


We remark that we optimised the proof for simplicity rather than the size of the grid.

\begin{lemma} \label{lem:KnMinors}
    For every $M, A, n \in \N$ with $M \geq 1$, there exist $h, d, m \in \N$ such that $G_{h,d,m}$ is not $(M,A)$-quasi-isometric to a graph with no $K_n$ minor.
\end{lemma}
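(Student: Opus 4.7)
The plan is to show that for $h$, $d$, $m$ sufficiently large in terms of $M$, $A$, and $n$, every graph $H$ that is $(M,A)$-quasi-isometric to $G := G_{h,d,m}$ contains $K_n$ as a minor. The argument leverages the recursive structure of $G$: the $m-1$ nested layers of base paths together with the binary tree $B(G)$ on top yield $m$ vertex-disjoint routes between $\SSS(G)$ and $\TTT(G)$, meeting only at the gluing vertices $V^j_i$, which by \cref{obs:BlackDotsAreFarApart} are pairwise at distance at least $2d+2$ in $G$.

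Given an $(M,A)$-quasi-isometry $f\colon V(G) \to V(H)$, we construct a $K_n$-topological minor in $H$ as follows. Taking $m \geq \binom{n}{2} + 2n$, we select $n$ widely-spaced gluing vertices $g_1,\ldots,g_n$ to serve as branch vertices, and for each pair $\{i,j\}$ a connecting path $P_{ij}$ in $G$ from $g_i$ to $g_j$, routed through one of $\binom{n}{2}$ distinct routes. Setting $U_i := \{f(g_i)\}$ and $E_{ij} := B_H(f(V(P_{ij})), M+A)$, \cref{lem:QIPreservesConn} gives that each $E_{ij}$ is connected in $H$; and for $d$ chosen large enough in terms of $M$ and $A$, the quasi-isometry property \cref{quasiisom:1} together with the distance lower bound from \cref{obs:BlackDotsAreFarApart} guarantees that the $E_{ij}$'s are pairwise disjoint in $H$ except at the required $U_i$-intersections, yielding a $K_n$-topological minor, hence a $K_n$-minor.

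The main obstacle is the constraint from \cref{lem:NoFatGrids}: if the paths $P_{ij}$ were chosen pairwise at distance at least $3$ in $G$, the resulting $3$-fat $K_n$-topological minor would, for $n \geq 154^2$, yield a $3$-fat $(154\times 154)$-grid minor in $G$ (since $K_n$ contains the grid as a subgraph and fat minors pass to minors of the model), contradicting \cref{lem:NoFatGrids}. Hence the $P_{ij}$'s must in fact share substantial structure in $G$, so a naive `fat topological minor in $G$' argument cannot work. The subtlety, and the crux of the proof, is to route the paths so that the shared structure is concentrated in the `hub' part of $G$ (near $\RR(G)$ and the upper portions of $B(G)$), while placing the $g_i$'s in pairwise distinct subtrees $\Delta(x)$ so that the `tails' of the $E_{ij}$'s near the $g_i$'s remain pairwise disjoint in $H$. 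Here \cref{obs:SubtrianglesSeparateST} is used to control the geometry between the hub and the subtrees, and the quantitative choice of $h$, $d$, $m$ in terms of $M$, $A$, $n$ enters precisely to ensure that the shared hub structure, whose image in $H$ has bounded diameter in terms of $M$, $A$, $h$, does not interfere with the required $K_n$-minor configuration in $H$.
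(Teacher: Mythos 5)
There is a genuine gap, and it lies exactly where you locate "the crux of the proof" without supplying it. Your mechanism is to choose branch vertices $g_i$ and connecting paths $P_{ij}$ \emph{in $G$} and push them forward, taking $E_{ij} := B_H(f(V(P_{ij})), M+A)$. But $G$ simply does not contain the required configuration: by the structure of the construction (this is the content of \cref{lem:NoTwoPaths} and its use in \cref{lem:NoFatLinkedSet:Vi}), any two paths that cross the same copy $H^j_i$ between its $\SSS$ and $\TTT$ and avoid the root come within distance $2$ of each other in $G$. Their images under $f$ are then within $2M+A$ of each other in $H$, so the fattened sets $E_{ij}$ intersect wherever the underlying paths come close --- which, for $\binom{n}{2}$ paths between widely spaced gluing vertices, is unavoidable and happens throughout $G$, not only "near the hub". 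Your proposed remedy does not repair this: even if all the pairwise intersections were confined to a region near $\RR(G)$ and the top of $B(G)$, the $E_{ij}$ would still fail to be internally disjoint there, and a putative $K_n$ model whose $\binom{n}{2}$ connecting paths all run through a common region is not a model at all. (Also, that region does not have bounded diameter: $h$ must be taken large, so $B(G)$ and its image in $H$ are large.) No choice of $h,d,m$ fixes this, because the obstruction is qualitative, not quantitative.

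The missing idea is that the disjoint structure must be found \emph{in $H$}, not transported from $G$. The paper's proof applies Menger's theorem in $H$ (after contracting small balls) to obtain $m = n^2$ pairwise disjoint $\SSS(H)$--$\TTT(H)$ paths avoiding a ball around $f(\RR(G))$; the existence of these paths is exactly what \cref{lem:NoSmallSeparator} guarantees, once a hypothetical small Menger separator in $H$ is pulled back to $G$ via \cref{lem:QIPreservesConn:Converse} and \cref{lem:QIPreservesSeps}. These long disjoint paths in $H$ --- not singletons $\{f(g_i)\}$ --- serve as the branch sets of the $K_n$ model; each of them is forced by \cref{obs:SubtrianglesSeparateST} and \cref{lem:QIPreservesSeps} to pass near every separator $f(S_i)$, and the branch paths are then built from images of spines. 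Your proposal never invokes Menger's theorem or \cref{lem:NoSmallSeparator}, which is the heart of the argument; without it there is no source of $n$ genuinely disjoint connected sets in $H$ that pairwise admit many connections.
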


\begin{proof}[Proof of \cref{main:Counterexample} given \cref{lem:KnMinors,lem:NoFatGrids}]
    Given $M,A,n \in \N$ with $M\geq 1$, we let $G := G_{h,d,m}$ as provided by \cref{lem:KnMinors} for $M,A,n$. Then \ref{itm:Counterex:NoGrid} follows from \cref{lem:NoFatGrids} and \ref{itm:Counterex:Kn} follows from \cref{lem:KnMinors}.
\end{proof}

In the remainder of this section we prove that the graphs~$G_{h,d,m}$ are still counterexamples to the weak coarse Menger conjecture; a fact that we need for the proofs of \cref{lem:KnMinors,lem:NoFatGrids}.
The proofs are similar to those given by Nguyen, Scott and Seymour \cite{NewCounterexToCoarseMenger}, just that slightly more work is required at one step due to the modification to the construction.
In fact, the next lemma is essentially immediate from one of their proofs.

\begin{lemma} \label{lem:NoTwoPaths}
    Let $h, d, m \in \N$ with $d,m \geq 2$. If $P,Q$ are paths in $G := G_{h,d,m}$ between $\SSS(G)$ and $\TTT(G)$, then either one of $P,Q$ contains $\RR(G)$, or $d_G(P,Q) \leq 2$.
\end{lemma}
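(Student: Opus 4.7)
I proceed by induction on $m \ge 2$.

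For the base case $m = 2$, I analyze $G = G_{h,d,2}$ after removing $r$: the binary tree $B(G)$ splits into two subtrees $T_L, T_R$ rooted at the two children of $r$, while the base path remains intact. By the construction, the spines from the leaves of $T_L$ attach to a specific set $L$ of anchors on the base path, and those of $T_R$ attach to a disjoint set $R$; these sets interleave near the middle of the base path. Every $\SSS(G)$--$\TTT(G)$ path in $G - r$ must use, via the base path, at least one ``transition'' subpath linking an $L$-anchor to an $R$-anchor, since the only connections between the $T_L$-attached and $T_R$-attached parts of $G - r$ lie along such subpaths of the base path. By inspecting the bounded cluster of transition anchors around the middle, one argues that any two such paths $P, Q$ either share a common anchor (distance $0$) or contain a pair of vertices within distance $\le 2$, proving the base case.

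For the inductive step $m \ge 3$, assume the lemma for $G_{h,d,m-1}$ and let $G = G_{h,d,m}$ with bottom chain $H^{m-1}_1, \ldots, H^{m-1}_n$. The goal is to identify a common central copy $H^{m-1}_{c^*}$ such that both $P$ and $Q$ contain $\SSS(H^{m-1}_{c^*})$--$\TTT(H^{m-1}_{c^*})$ subpaths. Such subpaths necessarily avoid $\RR(H^{m-1}_{c^*}) = r$, so the inductive hypothesis applied inside $H^{m-1}_{c^*} \cong G_{h,d,m-1}$ yields $d_{H^{m-1}_{c^*}}(P \cap H^{m-1}_{c^*}, Q \cap H^{m-1}_{c^*}) \le 2$, and hence $d_G(P,Q) \le 2$.

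The main obstacle is exhibiting such a common $H^{m-1}_{c^*}$. After removing $r$, the two tree halves $T_L, T_R$ have spines reaching disjoint ranges of gluing points $V^{m-1}_k$, and any $\SSS(G)$--$\TTT(G)$ path must use chain crossings to transition between these ranges. Paths that take long tree detours pass through shared internal tree vertices (for instance, the root of $T_L$ or $T_R$), while paths with short tree detours are confined to small subtrees whose spines reach overlapping $V^{m-1}_k$'s, forcing shared chain crossings; in either case $P$ and $Q$ come within distance $2$ or share a common crossed $H^{m-1}_{c^*}$. This case analysis mirrors the Nguyen--Scott--Seymour strategy, and the ``slightly more work'' mentioned in the paper stems from the modification of the construction (the removal of direct paths between vertices of the $V^{m-1}_i$): one must verify that no shortcut arises in the modified graph that would allow a path to bypass every central $H^{m-1}_c$ without either passing $r$ or being traced by the other path.
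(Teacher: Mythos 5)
Your plan (induction on $m$, reducing the inductive step to both paths crossing a common copy $H^{m-1}_{c^*}$ between its $\SSS$ and $\TTT$) is plausible, but the combinatorial core is asserted rather than proved, and the inference you draw in the base case is not valid as stated. For $m=2$, the middle-cut analysis only gives that each of $P,Q$ traverses one of the three consecutive base-path segments near the middle whose two end-anchors are attached to different halves of $B(G)-\RR(G)$. If $P$ traverses the leftmost and $Q$ the rightmost of these three segments, then near the middle \emph{no} pair of vertices of $P$ and $Q$ is within distance $2$: the relevant anchors are $d+1\geq 3$ apart, and every segment and spine has length $d+1\geq 3$. Concretely, let $P$ run along the base path from $s_1$, cross the leftmost of the three segments, and immediately climb the spine at its right end-anchor into the right half of the tree, ascending to the child of $\RR(G)$ and descending to $t_2$; let $Q$ go from $s_2$ through the left half of the tree, descend a spine to the left end-anchor of the rightmost segment, cross it, and follow the base path to $t_1$. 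Then the unique pair of vertices of $P,Q$ at distance $\leq 2$ is the pair of children of $\RR(G)$: the closeness is witnessed only high in the tree, and only because of how the paths are forced to continue after crossing. So ``inspecting the bounded cluster of transition anchors around the middle'' cannot close the base case; what is needed is exactly the pursuit/forcing argument showing that two paths avoiding $\RR(G)$ cannot dodge each other both along the base structure and up the tree. The same content is missing in your inductive step: the dichotomy ``either $P,Q$ come within distance $2$ or they cross a common $H^{m-1}_{c^*}$ from $\SSS(H^{m-1}_{c^*})$ to $\TTT(H^{m-1}_{c^*})$'' is precisely the statement requiring proof, and it is only asserted (note also that paths taking ``long tree detours'' need not share internal tree vertices; they merely come within distance $2$ across $\RR(G)$, as above).

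For comparison, the paper does not reprove this lemma from scratch: it observes that for $h=2\ell+2$, $d=2\ell$ the graph $G_{h,d,m}$ is a subgraph of an $(\ell,m)$-block of Nguyen, Scott and Seymour obtained by deleting only dotted paths, invokes their result \cite[\nopp 2.1]{NewCounterexToCoarseMenger}, and notes that a distance-$({\leq}\,2)$ witness there can never run through a dotted path (these have length $d+1\geq 3$), so it survives in $G$; general $h,d$ are handled by noting their proof needs only $d\geq 1$, $h\geq 0$. If you want a self-contained argument along your lines, you must supply the analogue of that lemma's case analysis; as written, your proposal leaves its hardest steps as claims.
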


\begin{proof}
    If $h = 2\ell+2$ and $d = 2\ell$ for some $\ell \in \N$, then $G := G_{h,d,m}$ is a subgraph of an $(\ell, m)$-block $H$ in~\cite{NewCounterexToCoarseMenger}. The assertion then follows immediately from \cite[\nopp 2.1]{NewCounterexToCoarseMenger}, since we can obtain $G$ from $H$ by deleting only dotted paths, and $d_H(P,Q) \leq 2$ will never be witnessed by a dotted path (as they have length $d+1 \geq 3$).
    
    For all other $h,d$, we remark that the proof of \cite[\nopp 2.1]{NewCounterexToCoarseMenger} only uses $d \geq 1$ and $h \geq 0$, so the assertion also follows for such $G_{h,d,m}$.
\end{proof}

\begin{lemma} \label{lem:NoSmallSeparator}
    Let $\ell, h, d, m \in \N$ with $d \geq 2\ell$, $h \geq 2\ell+2$, and $m \geq 2$. For every set $X$ of less than $m$ vertices of $G:=G_{h,d,m}$, there is a path $P$ in $G$ between $\SSS(G)$ and $\TTT(G)$ with $d_G(P, X \cup \{\RR(G)\}) > \ell$.
\end{lemma}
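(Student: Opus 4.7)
The plan is to proceed by induction on $m \geq 2$, mirroring the argument of Nguyen, Scott, and Seymour \cite{NewCounterexToCoarseMenger} while accounting for the absent dotted paths among $\SSS(G)$, $\TTT(G)$, and shared-set vertices in our modified construction.

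\textbf{Base case $m = 2$.} Here $|X| \leq 1$, so $X \cup \{\RR(G)\}$ has at most two vertices. The base path of $G_{h,d,2}$ from $s_1$ to $t_1$ lies at distance at least $h + (d+1)$ from $\RR(G)$, which exceeds $\ell$ by the hypothesis $h \geq 2\ell + 2$. If $X$ is empty or its lone vertex $x$ satisfies $d_G(x, \text{base path}) > \ell$, the base path is the desired path. Otherwise, we detour around $x$: walk the base path up to a suitable anchor on one side of $x$, ascend via a spine into the binary tree $B(G_{h,d,2})$, traverse within the tree at a level at least $\ell + 1$ below the root (which exists since $h \geq 2\ell + 2$) and separated from $x$ by enough anchors (using $d \geq 2\ell$), and descend via another spine to rejoin the base path past $x$.

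\textbf{Inductive step.} Assume the statement for $m - 1$. View $G := G_{h,d,m}$ as $n := 2^{h+1} - 1$ copies $H_1^{m-1}, \ldots, H_n^{m-1}$ of $G_{h,d,m-1}$ linked in sequence via their shared sets $V_i^{m-1}$, with all roots identified to $\RR(G)$ and a binary tree $B(G)$ attached by spines. Each vertex of $X$ lies in at most two consecutive copies, so $X$ touches at most $2(m-1) < n$ copies. For every copy $H_i^{m-1}$ with $|X \cap V(H_i^{m-1})| < m - 1$, the inductive hypothesis yields a sub-path from $\SSS(H_i^{m-1})$ to $\TTT(H_i^{m-1})$ avoiding $X \cap V(H_i^{m-1}) \cup \{\RR(G)\}$ at distance $> \ell$. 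The only obstructions are \emph{bad} copies with $|X \cap V(H_i^{m-1})| = m - 1$; since $|X| \leq m - 1$, these form a contiguous block of at most two copies (occurring only when $X$ is concentrated inside one copy or in a shared set $V_j^{m-1}$). We bypass the bad block through $B(G)$: ascend from an $X$-free copy on the left via a spine, traverse $B(G)$ at a level at least $\ell + 1$ below the root, and descend into an $X$-free copy on the right. Concatenating the pieces produces the desired path.

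\textbf{Main obstacle.} The principal difficulty lies in the bad case, where we must route through $B(G)$ avoiding both $\RR(G)$ (by distance $> \ell$) and the remaining vertices of $X$ simultaneously. The bound $h \geq 2\ell + 2$ provides enough vertical room in the binary tree to stay away from the root, while $d \geq 2\ell$ ensures that the spine detours do not come too close to obstacles on the base paths of the adjoining copies. A secondary subtlety is that our modification removes the dotted paths present in the original $(\ell, m)$-blocks of \cite{NewCounterexToCoarseMenger} between $\SSS$/$\TTT$ vertices and within each $V_i^{m-1}$; since these removed paths had length $d + 1 > \ell$, they could never have served as short detours, so their absence does not affect the argument, but some care is needed to check that the routing relies only on retained edges when crossing shared sets.
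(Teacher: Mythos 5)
There is a genuine gap at the heart of the inductive step: the concatenation of the sub-paths obtained from the induction hypothesis. As you state it, the hypothesis only provides, in each copy $H_i^{m-1}$, \emph{some} path between $\SSS(H_i^{m-1})$ and $\TTT(H_i^{m-1})$, with no control over which vertex of the shared set $V_{i+1}^{m-1}$ it ends at. But in this modified construction the $m-1$ vertices of $V_{i+1}^{m-1}$ are pairwise at distance at least $2d+2$ and are \emph{not} joined by the dotted paths, so if the piece from $H_i^{m-1}$ arrives at one vertex of $V_{i+1}^{m-1}$ and the piece from $H_{i+1}^{m-1}$ departs from another, there is no way to splice them without travelling through a copy again or through $\RR(G)$. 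This is exactly the ``care needed when crossing shared sets'' you flag at the end, but it is not a side issue: it forces a reformulation of the induction. The paper proves a \emph{stronger} statement by induction, namely that for any prescribed $s\in\SSS(G)$, $t\in\TTT(G)$ with $s,t\notin B_G(X,d+\ell+1)$ there is an $s$--$t$ path at distance at least $\ell$ from $X\cup\{\RR(G)\}$, and then, before applying it in the copies, it selects a common meeting vertex $v_i\in V_i^{m-1}$ lying outside $B_{H_{i-1}}(B_G(X,\ell),d+1)\cup B_{H_i}(B_G(X,\ell),d+1)$, so that consecutive pieces genuinely share an endpoint. Without prescribed endpoints your concatenation does not go through.

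A second, related gap is your classification of obstructions. You declare a copy ``bad'' only if it contains $m-1$ vertices of $X$, but the problematic configurations are measured in terms of the balls $B_G(x,\ell)$, not of membership of $X$ itself. For instance, the $m-1$ vertices of $X$ may sit near the $m-1$ distinct vertices of a single shared set $V_j^{m-1}$ but be split between the two adjacent copies: then no copy contains $m-1$ vertices of $X$, so your scheme sees no bad copy, yet every vertex of $V_j^{m-1}$ is close to $X$ and no admissible meeting vertex $v_j$ exists, so the crossing at $V_j^{m-1}$ must be bypassed through a leaf of $B(G)$ whose spines ``jump over'' $V_j^{m-1}$. The paper isolates exactly this case (no $V_i$ contained in $B_{H_i}(B_G(X,\ell),d+1)\cup B_{H_{i+1}}(B_G(X,\ell),d+1)$) and the case that a single copy meets all $m-1$ balls, and in both shows the binary tree away from $B_G(\RR(G),\ell)$ and the remaining copies are clean; these case distinctions are what make the detour through $B(G)$ legitimate. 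Finally, your counting claim that $X$ touches at most $2(m-1)<n$ copies is false in general: $m$ is not bounded in terms of $h$, so $2(m-1)$ can exceed $n=2^{h+1}-1$; the argument must not rely on the existence of wholly $X$-free copies, and the paper's does not.
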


\begin{proof}
    We prove the following statement: 
    \labtequtag{eq:NoSmallSep:IndHyp}
    {\emph{For every $s \in \SSS(G)$ and $t \in \TTT(G)$ such that $s, t \notin B_G(X, d+\ell+1)$, there is an $s$--$t$ path~$P$ in~$G$ with $d_G(P, X \cup \{\RR(G)\}) \geq \ell$.}}
    {$\ast$}
    Since any two vertices in $\SSS(G) \cup \TTT(G)$ have distance at least $3d + 3 > 2d+2\ell+2$ from each other by construction, and hence at least one vertex in $\SSS(G)$ and one vertex in $\TTT(G)$ does not lie in $B_G(X, d+\ell+1)$, this immediately implies, and strengthens the lemma.

    We proceed by induction on $m \in \N$. For $m = 2$ the assertion is straightforward to check.
    Now let $m \geq 3$, and assume that the result holds for $m-1$. Let $X \subseteq V(G)$ be a set of less than $m$ vertices, and let $s, t$ be as in~\eqref{eq:NoSmallSep:IndHyp}.
    Set $A_x := B_G(x, \ell)$ for $x \in X$ and $A := B_G(X, \ell)$, and abbreviate $H_i := H_i^{m-1}$ and $V_i := V^{m-1}_i$.

    \begin{claim} \label{claim:NoXIsAnInternalVertex}
        We may assume that for every path $P$ of $G$ of length $d+1$ in which all internal vertices have degree two in $G$, no internal vertex of $P$ belongs to $X$.
    \end{claim}

    \begin{claimproof}
        Let $p,q$ be the endvertices of $P$. By the definition of $G$, both $p, q$ have degree $3$. If $x \in X$ is an internal vertex of such a path $P$, then, since $P$ has length $d+1$, at most one of $p,q$ belongs to $A_x$, so we may assume that $p \notin A_x$. Let $X' = (X \setminus \{x\}) \cup \{p'\}$. If the result is true for $X'$ then it is also true for $X$, so by repeating this at most $|X|$ times, we may assume there is no such $x$.
    \end{claimproof}

    \begin{claim} \label{claim:NoViMeetsAllAx}
        We may assume that no $V_i$ is contained in $B_{H_i}(A, d+1) \cup B_{H_{i+1}}(A, d+1)$. 
    \end{claim}

    \begin{claimproof}
        Assume that some $V_i$ is contained in $B_{H_i}(A, d+1) \cup B_{H_{i+1}}(A, d+1)$. Since $|V_i|=m-1=|X|$ and because the vertices in $V_i$ have distance at least $3d+3 > 2d+2\ell+2$ from each other in $H_i$ as well as in~$H_{i+1}$, it follows that $H_i \cup H_{i+1}$ meets all $A_x$, which implies that $B(G) - B_G(\RR(G), \ell)$ avoids~$A$.
        Moreover, by \cref{obs:BlackDotsAreFarApart}, $V_{i-2}, V_{i-1}, V_{i+1}$ and $V_{i+2}$ avoid~$A$, which also implies that $H_j-B_G(\RR(G), \ell)$ avoids~$A$ for all $j \neq i-1,i$. 
        By the construction of~$G$, there is a leaf $L$ of the binary tree~$B(G)$ whose spines `jump over'~$V_i$, in that they connect $L$ to either $V_{i-2}$ and $V_{i+1}$ or to $V_{i-1}$ and $V_{i+2}$. By the previous two arguments and by \cref{claim:NoXIsAnInternalVertex}, we can thus use $L$ together with two of its spines to construct a path~$Q$ from $\bigcup_{j < i} H_i$ to $\bigcup_{j > i} H_i$ that avoids $A$. 
        Since also all $H_j-B_G(\RR(G), \ell)$ with $j \neq i-1, i$ avoid $A$, we can easily extend $Q$ to an $s$--$t$ path through the $H_j-B_G(\RR(G), \ell)$, which are clearly connected.
    \end{claimproof}

    By \cref{claim:NoViMeetsAllAx}, we may assume that no $V_i$ is contained in $B_{H_i}(A, d+1) \cup B_{H_{i+1}}(A, d+1)$, so we may pick from every $V_i$ some $v_i$ avoiding this set. By assumption, we may define $v_1 := s$ and $v_{n+1} := t$.

    \begin{claim} \label{claim:SomeHiMeetsAllAx}
        We may assume that some $H_i$ meets all $A_x$.
    \end{claim}

    \begin{claimproof}
        Assume that every $H_i$ meets at most $m-2$ of the $A_x$. 
        For every $x \in X$, the ball $A_x$ contains at most one vertex in $V_i \cup V_{i+1}$, since they pairwise have distance at least $d + 1 \geq 2\ell+1$ by \cref{obs:BlackDotsAreFarApart}. 
        Consequently, $A \cap V(H_i)$ is the union of at most $m-2$ balls of $H_i$ of radius at most $\ell$. Then for every $i \in \N$ we may apply the induction hypothesis \eqref{eq:NoSmallSep:IndHyp} in $H_i$ to $v_i$ and $v_{i+1}$. This yields for every $i \in \N$ an $v_i$--$v_{i+1}$ path in $H_i$ that avoids $A \cup B_G(\RR(G), \ell)$. Clearly, the concatenation of all $P_i$ is the desired $s$--$t$ path.
    \end{claimproof}

    By \cref{claim:SomeHiMeetsAllAx}, we may assume that some $H_i$ meets all $A_x$. Similarly as in the proof of \cref{claim:NoViMeetsAllAx}, we obtain that $B(G)-B_G(\RR(G), \ell)$ avoids $A$, so without loss of generality, we find a path $Q$ from either $v_{i-2}$ to~$v_{i+1}$ or from~$v_{i-1}$ to~$v_{i+2}$ using two suitable spines. 
    Now since every~$A_x$ meets $H_i$ but avoids $v_i$ and $v_{i+1}$, every other $H_j$ meets at most $m-2$ sets $A_x$. Hence, we may use the same argument as in the proof of \cref{claim:SomeHiMeetsAllAx} to find in every $H_j$ some $v_j$--$v_{j+1}$ path $P_j$ that avoids $A \cup B_G(\RR(G), \ell)$. Then $Q$ together with the paths $P_j$ is the desired $s$--$t$ path.
\end{proof}

Note that it follows from \cref{lem:NoTwoPaths,lem:NoSmallSeparator} that the graphs $G_{h,d,m}$ are still a counterexample to the weak coarse Menger conjecture.
For our proof of \cref{main:Counterexample} we will need to use \cref{lem:NoTwoPaths,lem:NoSmallSeparator} separately however.

We conclude the section with the following corollary of \cref{lem:NoTwoPaths}, which we will need for the proofs of \cref{lem:NoFatGrids} and \cref{main:ErdosPosaPlanar}.
Recall that $\SSS(G) = \{s_1, \dots, s_m\}$ (enumerated from `bottom' to `top') (see \cref{fig:Ghdm}).

\begin{corollary} \label{cor:NoTwoPathsBetweenSameAdhesion}
    Let $h,d,m \in \N$ with $d,m \geq 2$. Let $P,Q$ be paths in $G := G_{h,d,m}$ such that $P$ has endvertices $s_{k_1}, s_{\ell_1} \in \SSS(G)$ and $Q$ has endvertices $s_{k_2}, s_{\ell_2} \in \SSS(G)$. If $k_1, k_2 < \ell_1, \ell_2$, then either one of $P,Q$ contains $\RR(G)$, or $d_G(P,Q) \leq 2$.
\end{corollary}

We remark that by the symmetry of $G_{h,d,m}$, \cref{cor:NoTwoPathsBetweenSameAdhesion} is also true with $\TTT(G)$ instead of $\SSS(G)$.

\begin{proof}
    Let paths $P,Q$ be given, and let $j := \min\{\ell_1, \ell_2\}-1$; in particular, $V^{j}_{1} \subseteq V^m_1 = \SSS(G)$ contains $s_{k_1}, s_{k_2}$ but not $s_{\ell_1}, s_{\ell_2}$. Set $V' := V^{j}_{1}$, and let $H' := H^{j}_{1}$ be the (unique) copy of~$G_{h,d,j}$ in~$G$ with $\SSS(H') = V'$ (see \cref{fig:NoFatGrid:Proof}).
    Then every path in $G$ between $V'$ and $\SSS(G) \setminus V'$ that avoids $\RR(G)$ has to meet $\TTT(H')$ by the construction of~$G$ and as can be seen by \cref{fig:NoFatGrid:Proof}.
    Hence, either one of $P, Q$ meets $\RR(G)$, or $P, Q$ both meet $\TTT(H')$. Since we are done in the former case, we may assume the latter.
    But then $P, Q$ contain subpaths $P', Q'$ between $\SSS(H')$ and $\TTT(H')$ that are contained in~$H'$ and that avoid $\RR(H') = \RR(G)$. By \cref{lem:NoTwoPaths} (and because $H'$ is isomorphic to $G_{h,d,j}$), we have $d_{H'}(P', Q') \leq 2$, and hence $d_G(P,Q) \leq 2$, as desired.
     \begin{figure}[ht]
        \centering
        \include{Sketch_Lemma42_new}
        \vspace{-2em}
        \caption{A sketch of the situation in the proof of \cref{cor:NoTwoPathsBetweenSameAdhesion}. Every path in $G$ between $\{s_{k_1},s_{k_2}\}$ and $\{s_{\ell_1},s_{\ell_2}\}$ has to intersect $\TTT(H') \cup \{\RR(G)\}$.}
        \label{fig:NoFatGrid:Proof}
    \end{figure}
\end{proof}

\section{No large fat grid minors} \label{sec:NoFatGrids}

In this section we prove \cref{lem:NoFatGrids}, which we recall here for convenience:

\begin{customlem}{\cref*{lem:NoFatGrids}}
    The $(154 \times 154)$-grid is not a $3$-fat minor of $G_{h,d,m}$ for any $h,d,m \in \N$ with $m \geq 2$.
\end{customlem}

Let $G$ be a graph and $K, n \in \N$. We call a set $W$ of at least $2n$ vertices of $G$ \defn{$K$-fat $n$-path-connected (in~$G$)} if $d_G(u, v) \geq K$ for all $u,v \in W$ and if for every $\ell \leq n$ and for every two subsets $A, B \subseteq U$ of size $\ell$, there are $\ell$ paths in $G$ between $A$ and $B$ that are pairwise at least $K$ apart in $G$. 

It is well known that in a $(2n \times n)$-grid, for $n \in \N$, the vertices of any row are $1$-fat $n$-path-connected. By the definition of fat minors, the following is immediate:

\begin{observation} \label{obs:LinesInGridAreNConn}
    Let $K,n \in \N$, and let $(\cU, \cE)$ be a $K$-fat model of the $(2n \times n)$-grid $H$ in a graph~$G$. Then for every $i \leq n$, every set $W$ consisting of one vertex of each branch set in $\cU$ corresponding to a vertex of the $i$-th row of $H$ is $K$-fat $n$-path-connected. \qed
\end{observation}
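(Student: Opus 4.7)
The plan is to verify directly the two conditions in the definition of $K$-fat $n$-path-connectedness for $W$. First, any two distinct vertices of $W$ lie in two distinct branch sets $U_v, U_{v'} \in \cU$, so the $K$-fatness of the model yields $d_G(U_v, U_{v'}) \geq K$, and hence the two vertices themselves are at distance at least $K$ in $G$.

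For the second condition, fix $\ell \leq n$ and subsets $A, B \subseteq W$ of size $\ell$, and let $A_H, B_H$ denote the corresponding size-$\ell$ subsets of the $i$-th row of $H$; that is, $A_H$ consists of those $v$ in row $i$ whose branch set $U_v$ meets $A$, and similarly for $B_H$. By the fact recalled just before the observation, the $i$-th row is $1$-fat $n$-path-connected in $H$, so there exist $\ell$ pairwise vertex-disjoint paths $P_1, \dots, P_\ell$ in $H$ from $A_H$ to $B_H$. I will lift each $P_j = v_0^j v_1^j \dots v_{k_j}^j$ to a walk in $G$ by starting at the vertex of $A$ in $U_{v_0^j}$, traversing $U_{v_0^j}$ (which is connected) to an endpoint of the branch path $E_{v_0^j v_1^j}$, crossing that branch path to $U_{v_1^j}$, and continuing in this fashion until reaching the vertex of $B$ in $U_{v_{k_j}^j}$. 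Let $Q_j$ be any $A$--$B$ path extracted from this walk; its vertex set is contained in $\bigcup_t U_{v_t^j} \cup \bigcup_t V(E_{v_t^j v_{t+1}^j})$.

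It then remains to show $d_G(Q_j, Q_{j'}) \geq K$ for $j \neq j'$. Since $P_j$ and $P_{j'}$ are vertex-disjoint in $H$, the collection of branch sets and branch paths indexed by the vertices and edges of $P_j$ is entirely disjoint from the analogous collection for $P_{j'}$; moreover, no branch path in the $P_j$-collection is $H$-incident to any branch set in the $P_{j'}$-collection, since this would force a vertex of $P_j$ to lie on $P_{j'}$. The $K$-fatness of the model therefore guarantees that each pair of elements of $\cU \cup \cE$, one drawn from each collection, is at distance at least $K$ in $G$; taking the minimum over all such pairs gives $d_G(Q_j, Q_{j'}) \geq K$. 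The only point that requires care in this argument is precisely the incidence clause of the $K$-fat definition, and this is exactly what vertex-disjointness of the paths $P_j$ is tailored to rule out; the rest of the argument is routine bookkeeping.
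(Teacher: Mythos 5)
Your proof is correct and follows the same route the paper intends: the paper leaves the observation as immediate from the definition of $K$-fat models, and your argument simply spells out the standard lifting of the $\ell$ vertex-disjoint row-to-row paths in the grid through branch sets and branch paths, with the key point — that vertex-disjointness of the grid paths rules out the incidence exemption in the $K$-fatness condition — handled correctly.
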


The proof of \cref{lem:NoFatGrids} consists of two steps. We first show that no $V_i^j$ in some $G_{h,d,m}$ contains a $3$-fat $7$-path-connected set (see \cref{lem:NoFatLinkedSet:Vi} below).  
To conclude the proof of \cref{lem:NoFatGrids}, we suppose for a contradiction that $G$ contains a $3$-fat model of the $(154\times 154)$-grid, which provides a $3$-fat $77$-path-connected set $W$ in $G$ by \cref{obs:LinesInGridAreNConn}. We then construct a \td\ $(T, \cV)$ of $G_{h,d,m}$, and use $(T, \cV)$ to `trap' a large subset of our hypothetical $3$-fat $77$-path-connected set $W$ inside a bag in $\cV$. But the bags in $\cV$ will essentially consist of the union of four $V_i^j$'s (plus up to eight additional vertices), which by the pigeon hole principle implies that one such $V^j_i$ contains a subset of $W$ of size at least~$14$. But this contradicts \cref{lem:NoFatLinkedSet:Vi}.

\begin{lemma} \label{lem:NoFatLinkedSet:Vi}
    Let $h, d, m \in \N$ with $m \geq 2$. Then no $V_i^j$ in $G_{h,d,m}$ contains a $3$-fat $7$-path-connected set. 
\end{lemma}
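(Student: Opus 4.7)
The plan is to induct on $j$. For $j \leq 13$ the lemma is vacuous, since $|V_i^j| = j < 14 = 2\cdot 7$ while every $3$-fat $7$-path-connected set has at least $14$ vertices by definition.

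For the inductive step $j \geq 14$, I exploit the recursive construction. Unpacking the definition of $\SSS$ in $G_{h,d,j}$ yields a decomposition
\[
V_i^j \;=\; V_{i'}^{j-1} \,\cup\, \{u_j\},
\]
where $V_{i'}^{j-1}$ is the leftmost level-$(j-1)$ gluing set sitting inside the level-$j$ copy $H \cong G_{h,d,j}$ with $V_i^j = \SSS(H)$, and $u_j$ is the leftmost leaf of the binary tree $B(H)$. A key easy observation is that the $3$-fat $7$-path-connected property is inherited by subsets of size at least $14$: any $A, B$ of size $\ell \leq 7$ inside such a subset are also subsets of the ambient set. Combining these two facts, a hypothetical $3$-fat $7$-path-connected $W \subseteq V_i^j$ with $|W| \geq 14$ yields a contradiction to the inductive hypothesis whenever $u_j \notin W$ or $|W \setminus \{u_j\}| \geq 14$, since then $W \setminus \{u_j\} \subseteq V_{i'}^{j-1}$ is still $3$-fat $7$-path-connected. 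This reduces everything to the boundary case $|W| = 14$ with $u_j \in W$.

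For this boundary case I would take the partition $A, B$ of $W$ into halves of size $7$ with $u_j \in A$ and argue that there are not $7$ pairwise $3$-distant $A$-$B$ paths in $G$. Since such paths are vertex-disjoint, by Menger's theorem it suffices to exhibit an $A$-$B$ vertex separator in $G$ of size at most $6$. The $14$ vertices of $W$ sit on a nested chain of binary trees $B(H^2) \subset B(H^3) \subset \cdots \subset B(H^j)$ (with one vertex on the base path of $H^2$), so any $A$-$B$ path must traverse either a small local bottleneck (a spine or a binary-tree edge near some $u_k$) or the unique global bottleneck $\RR(G)$. The main obstacle is turning this picture into a rigorous bound: \cref{lem:NoTwoPaths} controls pairwise-distant paths only between $\SSS$ and $\TTT$ of a $G_{h,d,\bullet}$-type subgraph, whereas here $A, B \subseteq \SSS(H)$ lie on the same side of $H$. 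Closing the gap will likely require choosing an auxiliary subgraph (for instance a $\Delta(x)$ with $x$ well inside $B(H)$) so as to reinstate a usable $\SSS$-$\TTT$ split, or else a direct path-counting argument that exploits the $3$-distance condition to rule out enough parallel routes.
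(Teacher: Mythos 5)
Your reduction (splitting $V_i^j$ into the level-$(j-1)$ gluing set plus the one binary-tree leaf, and using that subsets of size at least $14$ inherit $3$-fat $7$-path-connectedness) is correct as far as it goes, but it is essentially cosmetic: the surviving boundary case ($|W|=14$ with the leaf in $W$) carries all of the difficulty, and it is exactly this case that you leave open. Moreover, the route you sketch for it cannot work. Passing to vertex-disjoint paths and Menger's theorem discards precisely the hypothesis that makes the lemma true, namely that the $7$ paths are pairwise at distance at least $3$. The graph $G_{h,d,m}$ is built to be a counterexample to coarse Menger: by \cref{lem:NoSmallSeparator} it is full of pairwise \emph{disjoint} connections (no fewer than $m$ vertices separate $\SSS(G)$ from $\TTT(G)$, even avoiding the root), and the same abundance of parallel routes holds locally. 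A vertex of $V_i^j$ at level $k$ of the nesting has degree roughly $2k$ (its tree-parents plus $k-1$ spines into each adjacent copy), and any balanced split of $14$ vertices of $V_i^j$ puts vertices of high level on both sides; between two such vertices one can route well over $6$ internally disjoint paths through the spines and neighbouring gluing columns. Hence no $A$--$B$ separator of size at most $6$ exists for your split, Menger produces the $7$ disjoint paths rather than refuting them, and only the $3$-apartness condition can yield a contradiction. Your closing remark that \cref{lem:NoTwoPaths} "only controls $\SSS$--$\TTT$ paths" identifies the right missing idea, but you do not supply the mechanism to reinstate such a split. (A smaller quibble: for the rightmost gluing sets, e.g.\ $V^{m-1}_{n+1}=\TTT(H^{m-1}_n)$, there is no copy $H$ with $V_i^j=\SSS(H)$, so your decomposition needs the symmetric $\TTT$-version.)

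For comparison, the paper's proof does not induct at all; it takes $j$ maximal such that some $V_i^j$ contains a $3$-fat $7$-path-connected set $W$, orders $W$ "bottom to top" along the column, and splits it into the bottom and top halves $A,B$ (this ordered choice, unlike your arbitrary split, is what later produces crossing subpaths). Of the seven pairwise $3$-apart $A$--$B$ paths, at most one contains $\RR(G)$ and at most one the leaf $L$ attached to $V_i$ by spines; of the remaining five, at most one meets each neighbouring column $V_{i\pm 1}$, since two $3$-apart paths from $\SSS(H_i)$ to $\TTT(H_i)$ avoiding the root would already contradict \cref{lem:NoTwoPaths}. The three surviving paths are trapped in $H_{i-1}\cup H_i$, and each contains a subpath crossing from the lower part $C$ to the upper part $D$ of $V_i$; two of these, say $Q_1,Q_2$, land in the same copy $H_i$. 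The key step you are missing is then to choose the maximal $j'<j$ for which the set $V'=V^{j'}_{i'}$ containing the lower endpoints of $Q_1,Q_2$ excludes their upper endpoints: every path in $H_i$ from $V'=\SSS(H')$ to $V_i\setminus V'$ avoiding $\RR(H')=\RR(G)$ must cross $\TTT(H')$, so $Q_1,Q_2$ yield two $3$-apart $\SSS(H')$--$\TTT(H')$ paths in $H'$ avoiding its root, contradicting \cref{lem:NoTwoPaths}. This is exactly the "reinstated $\SSS$--$\TTT$ split" you hoped for; making it precise is the heart of the proof, and without it (or some substitute that genuinely uses the distance-$3$ condition) your argument does not establish the lemma.
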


\begin{proof}
    Suppose for a contradiction that some $V_i^j$ of $G := G_{h,d,m}$ contains a $3$-fat $7$-path-connected set $W$. Let $j$ be maximal with this property, i.e.\ $V_i^j \not\subseteq V_k^{j+1}$ for any $k$, and abbreviate $V_i := V^j_i$. Let $H_{k-1}^j$ and $H_k^j$ be the two copies of $G_{h,d,j}$ that were glued together along $V_i$, and abbreviate them as $H_{i-1}$ and $H_i$ (in the case where $V_i = V_1^m$ or $V_i = V_{2^{h+1}}^m$, we let $H_{i-1}$ or $H_{i}$, respectively, be the empty graph). Also, let $H$ be the (unique) copy of $G_{h,d,j+1}$ in $G$ that contains $V_i = V_i^j$, i.e.\ if $j \leq m-2$, then $H = H^{j+1}_\ell$ for some $\ell$, and if $j=m$, then $H = G$.
    
    Without loss of generality assume that $|W| = 14$. We enumerate $W = \{w_1, \dots, w_{14}\}$ from `bottom to top'. More precisely, recall that $V_i = \TTT(H_{i-1})$, so every $w \in W$ is some $t_a \in \TTT(H_{i-1})$, which are enumerated by definition. We now enumerate the vertices in $W$ so that $j_a \leq j_b$ whenever $a \leq b$ and $w_a = t_{j_a}$ and $w_b = t_{j_b}$. Let $A := \{w_1, \dots, w_7\}$ and $B := \{w_{8}, \dots, w_{14}\}$. Since $W$ is $3$-fat $7$-path-connected, there are seven $A$--$B$ paths $P_1, \dots, P_7$ in $G$ that are pairwise at least $3$ apart. At most one $P_k$ contains $\RR(G)$ and at most one $P_k$ contains the unique leaf $L$ of the binary tree $B(H)$ that is joined to $V_i$ via spines. Without loss of generality, assume that $P_1, \dots, P_5$ avoid $\RR(G)$ and $L$. 
    
    By the construction of $G$, the sets $V_{i-1} := V^j_{i-1}$ and $V_{i+1} := V^j_{i+1}$ together with $\RR(G)$ and $L$ separate $H_{i-1} \cup H_{i}$ from $G-(H_{i-1} \cup H_i)$. Hence, every $P_k$, with $k \in [5]$, is either contained in $H_{i-1} \cup H_i$ or meets $V_{i-1} \cup V_{i+1}$. We claim that at most one $P_k$ intersects~$V_{i+1}$ and, analogously, at most one~$P_k$ intersects~$V_{i-1}$. Indeed, the $P_k$ start in $V_i = \SSS(H_{i})$ and are pairwise at least $3$-far apart, so if two of them meet $V_{i+1} = \TTT(H_{i})$, then they contain two subpaths between $\SSS(H_{i})$ and $\TTT(H_{i})$ that are contained in $H_{i}$ and that are at least $3$-far apart. But since they avoid $\RR(G) = \RR(H_{i})$, and because $H_{i}$ is isomorphic to $G_{h,d,j-1}$, this contradicts \cref{lem:NoTwoPaths}.  
    Hence, at most one $P_k$ meets $V_{i+1}$ and, analogously, at most one~$P_k$ meets $V_{i-1}$. 

    Without loss of generality assume that $P_1, P_2, P_3$ avoid $V_{i-1} \cup V_{i+1}$. In particular, $P_1, P_2, P_3$ are contained in $H_{i-1} \cup H_i$.
    Let $A' \subseteq A$ and $B'\subseteq B$ comprise the endvertices of $P_1, P_2, P_3$ in $A$ and $B$, respectively.
    Let $A'=:\{a_1,a_2,a_3\}$ and $B'=:\{b_1,b_2,b_3\}$ be again enumerated from `bottom to top' as described above. Let further $C \supseteq A'$ comprise all vertices of~$V_i$ up to $a_3$, and let $D := V_i \setminus C$ comprise all vertices of~$V_i$ above~$a_3$; in particular, $C \cup D = V_i$ and $C\cap D = \emptyset$. Since every~$P_k$, with $k \in [3]$, has one endvertex in~$C$ and one in~$D$, it contains a subpath $Q_k \subseteq P_k$ that has one endvertex in~$C$ and one in $D$ and that is internally disjoint from $C \cup D$. As $V_i$ separates $H_{i-1}$ and $H_i$ in $(H_{i} \cup H_{i-1}) - \RR(G)$, and because $C \cup D = V_i$, each $Q_k$ is contained in either $H_{i-1}$ or in $H_i$. 
    
    By symmetry and the pigeon hole principle, we may assume that $Q_1, Q_2 \subseteq H_i$. Now let us recall that $Q_1, Q_2$ have both endvertices in $V_i$, they are both contained in $H_i$, and they do not contain $\RR(G) = \RR(H_i)$. Moreover, $V_i = \SSS(H_i) = \{s_1, \dots, s_{j-1}\}$ (enumerated from `bottom to top'), and $Q_1, Q_2$ were chosen so that if $s_{k_1}, s_{\ell_1}$ are the endvertices of $Q_1$, and $s_{k_2}, s_{\ell_2}$ are the endvertices of $Q_2$, then $k_1, k_2 \leq \ell_1, \ell_2$. 
    Since $H_i$ is isomorphic to $G_{h,d,j}$, applying \cref{cor:NoTwoPathsBetweenSameAdhesion} to $H_i$ and $Q_1,Q_2$ yields $d_G(Q_1, Q_2) \leq 2$, a contradiction.
\end{proof}

Next, we give the construction of a \td\ of $G_{h,d,m}$. This construction is divided into two steps.
The \td\ from \cref{constr:TreeDecomp1} is a generalisation of the one from \cite{SmallCounterexFatMinorConj} that was considered for the original counterexample to the coarse Menger conjecture \cite{nguyen2025counterexample}.

\begin{construction} \label{constr:TreeDecomp1}
    Let $h,d,m \in \N$ with $m \geq 2$, and let $G:=G_{h,d,m}$. Let $T$ be the tree obtained from the binary tree $T':=B(G)$ in $G$ by adding for every vertex $x$ of $T'$ new vertices $v_x, u^1_x, \dots, u^{2m-2}_x$ and joining them to $x$. For every $x \in V(T')$ with (ordered from left to right) children $x_1,x_2$, we let 
    \begin{equation} \label{eq:Bags}
        V_x := \{\RR(G), x, x_1, x_2\} \cup \SSS(\Delta(x)) \cup \TTT(\Delta(x)) \cup \TTT(\Delta(x_1)) \cup \SSS(\Delta(x_2)),
    \end{equation}
    and $V_{v_x} := V(H_{i_x}^{m-1})$ where $i_x$ is such that $H^{m-1}_{i_x}$ lies directly `below' $x$, as indicated in \cref{fig:TreeDecomp}. Moreover, we let each of the bags of nodes $u^j_x$ consist of precisely one spine between $B(G)$ and $H^{m-1}_{i_x}$.
    It is easy to check that $(T, \cV)$ is a \td\ of $G$. Every adhesion set $V_e$ of an edge $e=xy \in V(T)$ that is not of the form $xv_x$ or $xu_x^i$ and where $y$ is a child of $x$ in $T$ is of the form
    \begin{equation} \label{eq:AdhesionSets}
        V_e = \{\RR(G), y\} \cup \SSS(\Delta(y)) \cup \TTT(\Delta(y)).
    \end{equation}
    Moreover, the adhesion set of the unique edge $e=xv_x$ of $T$ is $V_e = \SSS(H^m_{i_x}) \cup \TTT(H^m_{i_x}) \cup \{\RR(G)\}$.
\end{construction}

\begin{figure}[ht]
    \centering
    \begin{tikzpicture}[scale=0.49,auto=left]

\foreach \x in {2,4,6,8,10,12,14,16,18,20,22,24,26,28,30} {
\path [fill=gray,opacity=0.5]
(\x,0) to [bend right=20] (\x,2) to [bend right=20] (\x+2,2) to [bend right=20] (\x+2,0) to [bend right=20] (\x,0);
\draw[line width=0.7] (\x,0) to [bend right=20] (\x,2);
\draw[line width=0.7] (\x,2) to [bend right=20] (\x+2,2);
\draw[line width=0.7] (\x+2,2) to [bend right=20] (\x+2,0);
\draw[line width=0.7] (\x+2,0) to [bend right=20] (\x,0);
}

\tikzstyle{every node}=[inner sep=1.5pt, fill=black,circle,draw]

\node[red,ultra thick] (v2) at (2,0) {};
\node (v4) at (4,0) {};
\node (v6) at (6,0) {};
\node (v8) at (8,0) {};
\node (v10) at (10,0) {};
\node (v12) at (12,0) {};
\node (v14) at (14,0) {};
\node[red,ultra thick] (v16) at (16,0) {};
\node[red,ultra thick] (v18) at (18,0) {};
\node (v20) at (20,0) {};
\node (v22) at (22,0) {};
\node (v24) at (24,0) {};
\node (v26) at (26,0) {};
\node (v28) at (28,0) {};
\node (v30) at (30,0) {};
\node[red,ultra thick] (v32) at (32,0) {};

\node[red,ultra thick] (u2) at (2,2) {};
\node (u4) at (4,2) {};
\node (u6) at (6,2) {};
\node (u8) at (8,2) {};
\node (u10) at (10,2) {};
\node (u12) at (12,2) {};
\node (u14) at (14,2) {};
\node[red,ultra thick] (u16) at (16,2) {};
\node[red,ultra thick] (u18) at (18,2) {};
\node (u20) at (20,2) {};
\node (u22) at (22,2) {};
\node (u24) at (24,2) {};
\node (u26) at (26,2) {};
\node (u28) at (28,2) {};
\node (u30) at (30,2) {};
\node[red,ultra thick] (u32) at (32,2) {};

\node[red, ultra thick] (t3) at (3,4) {};
\node (t7) at (7,4) {};
\node (t11) at (11,4) {};
\node[red,ultra thick] (t15) at (15,4) {};
\node[red,ultra thick] (t19) at (19,4) {};
\node (t23) at (23,4) {};
\node (t27) at (27,4) {};
\node[red,ultra thick] (t31) at (31,4) {};

\draw[dotted,line width=1.4] (t3) to [bend left=20] (v6);
\draw[dotted,line width=1.4] (t3) to [bend left=20] (u6);
\draw[dotted,line width=1.4] (t7) to [bend left=20] (v10);
\draw[dotted,line width=1.4] (t7) to [bend left=20] (u10);
\draw[dotted,line width=1.4] (t11) to [bend left=20] (v14);
\draw[dotted,line width=1.4] (t11) to [bend left=20] (u14);
\draw[dotted,line width=1.4] (t15) to [bend left=20] (v18);
\draw[dotted,line width=1.4] (t15) to [bend left=20] (u18);
\draw[dotted,line width=1.4] (t19) to [bend left=20] (v22);
\draw[dotted,line width=1.4] (t19) to [bend left=20] (u22);
\draw[dotted,line width=1.4] (t23) to [bend left=20] (v26);
\draw[dotted,line width=1.4] (t23) to [bend left=20] (u26);
\draw[dotted,line width=1.4] (t27) to [bend left=20] (v30);
\draw[dotted,line width=1.4] (t27) to [bend left=20] (u30);

\draw[dotted,line width=1.4] (t7) to [bend right=20] (v4);
\draw[dotted,line width=1.4] (t7) to [bend right=20] (u4);
\draw[dotted,line width=1.4] (t11) to [bend right=20] (v8);
\draw[dotted,line width=1.4] (t11) to [bend right=20] (u8);
\draw[dotted,line width=1.4] (t15) to [bend right=20] (v12);
\draw[dotted,line width=1.4] (t15) to [bend right=20] (u12);
\draw[dotted,line width=1.4] (t19) to [bend right=20] (v16);
\draw[dotted,line width=1.4] (t19) to [bend right=20] (u16);
\draw[dotted,line width=1.4] (t23) to [bend right=20] (v20);
\draw[dotted,line width=1.4] (t23) to [bend right=20] (u20);
\draw[dotted,line width=1.4] (t27) to [bend right=20] (v24);
\draw[dotted,line width=1.4] (t27) to [bend right=20] (u24);
\draw[dotted,line width=1.4] (t31) to [bend right=20] (v28);
\draw[dotted,line width=1.4] (t31) to [bend right=20] (u28);

\node (s5) at (5,6) {};
\node (s13) at (13,6) {};
\node (s21) at (21,6) {};
\node (s29) at (29,6) {};

\draw (t3) -- (s5)--(t7);
\draw (t11) -- (s13)--(t15);
\draw (t19) -- (s21)--(t23);
\draw (t27) -- (s29)--(t31);

\node[red,ultra thick] (r9) at (9,8) {};
\node[red,ultra thick] (r25) at (25,8) {};

\draw (s5) -- (r9)--(s13);
\draw (s21) -- (r25)--(s29);

\node[red,ultra thick] (q17) at (17,10) {};
\draw (r9) -- (q17)--(r25);

\tikzstyle{every node}=[]
\draw[red] (17,9.2) node [] {$x=r$};
\draw[red,below] (9,7.7) node [] {$x_1$}; 
\draw[red,below] (25,7.7) node [] {$x_2$}; 
\draw[gray] (17,-0.6) node [] {\large $H_{i_x}^2$};

\end{tikzpicture}
    \vspace{-2em}
    \caption{Indicated in red is the bag $V_x$ from \cref{constr:TreeDecomp1} for the case $x=r$ and $h, m = 3$.}
    \label{fig:TreeDecomp}
\end{figure}

\begin{construction} \label{constr:TreeDecomp2}
    Let $h,d,m \in \N$ with $m \geq 2$. For $m = 2$, we let $(T, \cV)$ be the \td\ of $G := G_{h,d,2}$ given by \cref{constr:TreeDecomp1} (where $H_{i_x}^1$ is a path of length $d+1$ between two anchors).

    Now let $m > 2$, and assume that we have defined a \td\ of $G_{h,d,m-1}$. Let $G := G_{h,d,m}$, and let $(T', \cV')$ be the \td\ of $G$ provided by \cref{constr:TreeDecomp1}. For every $H^{m-1}_i$, let $(T^i, \cV^i)$ be the \td\ of $H^{m-1}_i$ provided by \cref{constr:TreeDecomp2} for $h,d,m-1$ (where we remark that~$H^{m-1}_i$ is isomorphic to $G_{h,d,m-1}$). 

    Let $T$ be obtained from the disjoint union of tree $T'$ and the trees $T^i$'s by identifying each leaf~$v_x$ of~$T'$ with the root~$r_{i_x}$ of~$T^{i_x}$. Moreover, we set $V_x := V'_x$ for every node $x \in V(T')$ that is not a leaf, and $V_x := V^i_x$ for every other node of $T$ where $i$ is the unique integer such that $x \in V(T^i)$. 
    Since the adhesion set of an edge $e = xv_x$ in $T'$ is $V_e = \SSS(H^{m-1}_{i_x}) \cup \TTT(H^{m-1}_{i_x}) \cup \{\RR(G)\}$ and this set is also contained in the bag $V^{i_x}_{r_{i_x}}$ associated with the root $r_{i_x}$ of $T^{i_x}$, it follows that $(T, \cV)$ is a \td\ of $G$. 
\end{construction}

We are now ready to prove \cref{lem:NoFatGrids}.

\begin{proof}[Proof of \cref{lem:NoFatGrids}]
    Let $G := G_{h,d,m}$. Suppose for a contradiction that $G$ contains a $3$-fat model $(\cU, \cE)$ of the $(154\times 154)$-grid.
    Let $W$ be a set of $154$ vertices, one from each branch set in $\cU$ that corresponds to a vertex of the $77$-th row in the $(154\times 154)$-grid.

    Let $(T, \cV)$ be the \td\ of $G$ provided by \cref{constr:TreeDecomp2}. 
    We distinguish two cases according to how $W$ interacts with the separations of $G$ induced by $(T, \cV)$. We remark that $|W| = 154$, and hence the two cases below cover all possibilities (though they are not mutually exclusive).
    \medskip

    \noindent \textbf{Case 1:} \textit{For every edge $e = xy$ of $T$ precisely one of the two sides of the separation $\{A^x_e, A^y_e\}$ induced by~$e$ contains at least $123$ vertices of $W$.}

    We orient each edge of $T$ towards the unique side of $\{A^x_e, A^y_e\}$ that contains at least $123$ vertices of $W$. This defines an orientation of $E(T)$. Since $T$ is finite, there is a sink $x \in V(T)$, i.e.\ a node~$x$ of~$T$ all whose incident edges are oriented towards $x$. Since $x$ has degree at most $3$ in $T$, at least $|W| - 3\cdot (|W|-123) = 61$ vertices of $W$ are contained in $V_x$. By \eqref{eq:Bags} and the pigeon hole principle, it follows that at least one of the four $V_i^j$'s contained in $V_x$ contains at least $14$ vertices of $W$. This contradicts \cref{lem:NoFatLinkedSet:Vi}.  
    \medskip

    \noindent \textbf{Case 2:} \textit{There is an edge $e = xy$ of $T$ such that both sides of the separation $\{A^x_e, A^y_e\}$ induced by~$e$ contain at least $32$ vertices of $W$.}

    Let $W_z$, for $z \in \{x,y\}$, be a set of $32$ vertices in $W \cap A^z_e$. Now recall that $W$ consists of one vertex from each branch set corresponding to a vertex of the $77$-th row of the $(154 \times 154)$-grid. Since the `upper half' of the $(154 \times 154)$-grid is isomorphic to the $(154 \times 77)$-grid, the $77$-th row of the $(154 \times 154)$-grid is $1$-fat $77$-path-connected in the `upper half' of the $(154 \times 154)$-grid. Since our model of the $(154 \times 154)$-grid is $3$-fat in~$G$, it follows that there are $32$ paths $P_1, \dots, P_{32}$ between $W_x$ and $W_y$ in $G$ that are pairwise at least $3$ apart in $G$ and such that each $P_i$ is contained in the union of branch sets and paths of $(\cU, \cE)$ corresponding to the `upper half' of the $(154 \times 154)$-grid. 

    Since $V_e$ separates $W_x \subseteq A^x_e$ from $W_y \subseteq A^y_e$ in $G$, each $P_i$ meets $V_e$. For every $i \in [32]$ pick a vertex $w_i \in V(P_i) \cap V_e$, and let $W'$ be the union of the $w_i$. We show that $W'$ is $3$-fat $16$-connected in $G$. For every $w_i \in W'$ let $p_i$ be one of the two endvertices of $P_i$. Then the $p_i$ lie in branch sets corresponding to distinct vertices of the $77$-th row of the $(154 \times 154)$-grid. 
    Now for every choice of two sets $X, X' \subseteq W'$ of size at most $16$ and with $|X|=|X'|$, applying \cref{obs:LinesInGridAreNConn} to the $77$-th row and the `lower half' of the $(154 \times 154)$-grid yields $16$ paths $Q_1, \dots, Q_{16}$ between $\{p_i : w_i \in X\}$ and $\{p_i : w_i \in X'\}$ that are pairwise at least $3$ apart in $G$ and that are contained in the union of the branch sets and paths of $(\cU, \cE)$ corresponding to the `lower half' of the $(154 \times 154)$-grid. This in particular means that the paths $Q_i$ are internally disjoint from the $P_i$, so we can extend the $Q_i$ to $X$--$X'$ paths by adding suitable subpaths of the~$P_i$. By construction, these extend paths are still pairwise at least $3$ apart in $G$. 

    Hence, $W'$ is $3$-fat $16$-path-connected in $G$; in particular, $|W'| = 32$. Since $W' \subseteq V_e$, it follows by~\eqref{eq:AdhesionSets} and the pigeon hole principle that there is some $V_i^j$ that meets at least $14$ vertices of $W'$. Then $W' \cap V_i^j$ is $3$-fat $7$-path-connected, which contradicts \cref{lem:NoFatLinkedSet:Vi}.
\end{proof}

For later use in the proof of \cref{main:ErdosPosaPlanar}, let us remark that one can prove along the same lines the following stronger version of \cref{lem:NoFatGrids}.

For every $h,d,m \in \N$, let $\tilde{G}_{h,d,m}$ be the graph obtained from $G := G_{h,d,m}$ by adding, for every $i \in [m]$, a path $W_i$ of length $d+1$ between $t_i \in \TTT(G)$ and $s_{m-i+1} \in \SSS(G)$ (and otherwise disjoint from $G$ and from all other $W_j$).

\begin{customlem}{\cref*{lem:NoFatGrids}$^\prime$} \label{lem:NoFatGrids:EP}
    The $(154 \times 154)$-grid is not a $3$-fat minor of $
    \tilde{G}_{h,d,m}$ for any $h,d,m \in \N$ with $m \geq 2$.
\end{customlem}

\begin{proof}[Proof sketch]
    The proof is essentially the same as for \cref{lem:NoFatGrids}: The proof of \cref{lem:NoFatLinkedSet:Vi} stays the same except that in the case where $V_i = V_1^m (=\SSS(G))$ we let $H_{i-1}$ be $H^{m-1}_n$ (that is the (unique) copy of $G_{h,d,m-1}$ with $\TTT(H^{m-1}_n) \subseteq \TTT(G)$), and in the case where $V_i = V_n^m (= \TTT(G))$ we let $H_{i}$ be $H_1^{m-1}$. 
    Moreover, since $\SSS(G) \cup \TTT(G) \subseteq V_r$ in the \td\ $(T, \cV)$ from \cref{constr:TreeDecomp2}, we can add for every path $W_i$ a node $u_i$ to $T$, make it incident to~$r$, and associate with $u_i$ the bag $V(W_i)$. This clearly yields a \td\ of $\tilde{G}_{h,d,m}$. Moreover, since no branch set of the $77$-th row of any model of the $(154 \times 154)$-grid can be contained inside a path, we can choose the set $W$ in the proof of \cref{lem:NoFatGrids} so that it is contained in $G$. Then in Case~1 in the proof of \cref{lem:NoFatGrids}, we still find a node $x$ whose bag~$V_x$ contains at least $61$ vertices of $W$ (as no newly added bag contains any vertices from $W$), and in Case~2, the edge $e$ will still be an edge of the original tree $T$. Therefore, the proof of \cref{lem:NoFatGrids} also yields \cref{lem:NoFatGrids:EP}.
\end{proof}

\section{No quasi-isometry to a \texorpdfstring{$K_n$}{Kn}-minor-free graph} \label{sec:KnMinors}

In this section we prove \cref{lem:KnMinors}, which we recall here for convenience:

\begin{customlem}{\cref*{lem:KnMinors}}
    For every $M, A, n \in \N$ with $M \geq 1$, there exist $h, d, m \in \N$ such that $G_{h,d,m}$ is not $(M,A)$-quasi-isometric to a graph with no $K_n$ minor.
\end{customlem}

We remark that as in \cite{CounterexAgelosPanosConjecture}, it is possible to strengthen \Cref{lem:KnMinors} so that $G_{h,d,m}$ is not $(M,A)$-quasi-isometric to a graph with no 2-fat $K_n$ minor.
However, we omit the details since the proof becomes much more technical, partly due to a trick we use of applying Menger's theorem as in \cite{CounterexAgelosPanosConjecture} not working in the 2-fat setting. 

Let us first give a brief sketch of the proof of \Cref{lem:KnMinors}. For this, let $M,A, n \in \N$, let $G:=G_{h,d,m}$ for some large enough $h,d,m \in \N$ that we will specify later in the proof, and let $f$ be an $(M,A)$-quasi-isometry from~$G$ to some graph~$H$. To conclude the proof, we need to find a model of $K_n$ in $H$.

We first show that in $H$, there are $m$ pairwise disjoint paths between $f(\SSS(G))$ and $f(\TTT(G))$ that avoid $B:=B_H(f(\RR(G)), q)$ for some carefully chosen $q \in \N$. For this, we employ Menger's theorem, which yields a set $U \subseteq V(H)$ of at most $m-1$ vertices that separates $f(\SSS(G))$ and $f(\TTT(G))$ in~$H-B$ otherwise. We can then use $f$ to translate $U$ back to a set of $m-1$ vertices in~$G$ such that bounded-radius balls around them separate $\SSS(G)$ from $\TTT(G)$ in~$G-B_G(\RR(G), q')$. But this contradicts that~$G$ is a counterexample to the weak coarse Menger conjecture (and, more precisely, it contradicts \cref{lem:NoSmallSeparator}).

Hence, there are $m$ pairwise disjoint paths $P_1, \dots, P_m$ between $f(\SSS(G))$ and $f(\TTT(G))$ in~$H-B$, where $m := n^2$. We further find $n(n-1)/2$ sets $S_i$ of size $m+1$ that are pairwise far apart and that each separate $\SSS(G)$ from $\TTT(G)$ in $G-B_G(\RR(G),q')$ (essentially, $S_i$ will consist of some $V_j^{m-1}$ together with two leaves of $B(G)$). It follows that every path $P_j$ meets a bounded-radius ball around every such~$f(S_i)$. In fact, we will have chosen the paths $P_i$ so that they intersect balls around distinct vertices in each~$S_i$. We then throw away those paths $P_j$ that come close to the image under~$f$ of a leaf of $B(G)$ in some~$S_i$. These are at most $n(n-1)$ many, so we end up with $n$ paths $P_j$ that each meet a bounded-radius ball around every $f(V_j^{m-1})$ contained in some $f(S_i)$, but which avoid a bounded-radius ball around the image under~$f$ of the leaf of~$B(G)$ which is connected to $V_j^{m-1}$ via spines.
We then make these $n$ paths $P_j$ the branch sets of our model of~$K_n$, and as branch paths we roughly choose images under~$f$ of two suitable spines.

\begin{proof}[Proof of \cref{lem:KnMinors}]
    Let $M, A, n \in \N$ so that $M \geq 1$. 
    Let $r = r(M,A)$ be given by \cref{lem:QIPreservesSeps}, and
    set 
    \begin{align*}
        N &:= \frac{n(n-1)}{2},\\
        q &:= \lceil \log_2(N+1)\rceil,\\
        d &:= 4rM((2r+q)M+(6r+1)A+1),\\
        h &:= d+2, \text{ and}\\
        m &:= n^2. 
    \end{align*}
    
    Set $G := G_{k,d,m}$. Let $f:V(G) \to V(H)$ be an $(M,A)$-quasi-isometry from $G$ to a graph~$H$.
    We denote $\SSS(H) := f(\SSS(G))$, $\TTT(H) := f(\TTT(G))$ and $\RR(H) := f(\RR(G))$. Note that by \ref{quasiisom:1} and \cref{obs:BlackDotsAreFarApart} and because $d \geq M(A+1)$, we have that $|\SSS(H)| = m = |\TTT(H)|$ and that $\SSS(H), \TTT(H)$ are disjoint and do not contain $\RR(H)$.

    By the choice of $q$, the $q$-th level of the binary tree $B(G)$ has at least $N+1$ vertices, which we consider to be enumerated from left to right as $x_0, \dots, x_N$. For every $i \in [N]$ (recall that $[N]$ does not contain $0$), let $\Delta_i := \Delta(x_i)$ and $S_i := \SSS(\Delta_i) \cup \{\ell_i\}$ where $\ell_i$ is the (unique) leaf of $B(G)$ that is joined to $\SSS(\Delta_i)$ via spines (equivalently, $\ell_i$ is the leaf of $B(G)$ left of the (unique) leaf of $B(G)$ contained in $\SSS(\Delta_i)$).\footnote{We excluded $0$ to ensure that $\ell_i$ exists.}
    Let us remark for later use that 
    \begin{enumerate}[label=\rm{(\roman*)}]
        \item \label{itm:Nabla:FarApart} $d_G(S_i, S_j) \geq M(A+2r+1)$ for all $i \neq j$, and
        \item \label{itm:Nabla:FarApart:2} $d_G(u,v) \geq M(A+2r+1)$ for all $u\neq v \in S_i$ and $i \in [N]$.
    \end{enumerate}
    Indeed, for every $u,v \in S_i \cup S_j$ we have $d_G(u, v) \geq 2d+2$ by \cref{obs:BlackDotsAreFarApart}, unless at least one of $u,v$, say~$v$, is a leaf of $B(G)$, in which case either $d_G(u, v) \geq d+1$ if a shortest $u$--$v$ path contains a spine, or $d_G(u, v) \geq 2(h-q)$ if a shortest $u$--$v$ path is contained in $B(G)$. Hence, the assertion follows by the choice of $d$ and $h$ (we remark that for this argument $d \geq M(A+2r+1)$ and $h \geq M(A+2r+1)/2+q$ would suffice, but we needed to choose $d,h$ much larger for other reasons, which will become apparent later).
    
    Let $U := \bigcup_{i \in [m]} S_i$.

    \begin{claim} \label{claim:ManySTPathsInH}
        There are $m$ pairwise disjoint $\SSS(H)$--$\TTT(H)$ paths $P_1, \dots, P_m$ in $H - B_H(\RR(H), Mq+A)$ such that, for every $u \in U$, at most one $P_i$ meets $B_H(f(u), r)$.
    \end{claim}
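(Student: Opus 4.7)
The plan is to argue by contradiction: assume no such family of paths exists, apply Menger's theorem in a suitable contraction of $H$ to produce a separator, and pull that separator back to $G$ via the quasi-isometry in a way that contradicts \cref{lem:NoSmallSeparator}.

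First I would set up the auxiliary graph. Let $H_0 := H - B_H(\RR(H), Mq + A)$ and, for each $u \in U$, set $C_u := B_H(f(u), r) \cap V(H_0)$. Using \ref{itm:Nabla:FarApart} and \ref{itm:Nabla:FarApart:2} together with \ref{quasiisom:1}, the balls $B_H(f(u), r)$ for $u \in U$ are pairwise disjoint; analogous distance estimates (exploiting that $\SSS(G)$ and $\TTT(G)$ lie far from $U$ and from $\RR(G)$ in $G$, which follows from the construction and the choices of $d$ and $h$) show that $\SSS(H)$ and $\TTT(H)$ are disjoint from every $C_u$ and from $B_H(\RR(H), Mq+A)$. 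Since every ball in $H$ is connected, each $C_u$ is connected in $H_0$. Let $H_1$ be obtained from $H_0$ by contracting each $C_u$ to a single vertex $\tilde{u}$.

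Next, $m$ pairwise vertex-disjoint $\SSS(H)$--$\TTT(H)$ paths in $H_1$ correspond exactly to $m$ vertex-disjoint $\SSS(H)$--$\TTT(H)$ paths in $H_0$ in which each $C_u$ is met by at most one path: the nontrivial direction uses the connectedness of $C_u$ to reroute each visit to $\tilde{u}$ through $C_u$. Assuming the claim fails, Menger's theorem then supplies a set $T \subseteq V(H_1)$ with $|T| \leq m - 1$ separating $\SSS(H)$ from $\TTT(H)$ in $H_1$. Writing $T_0 := T \cap V(H_0)$ and $U' := \{u \in U : \tilde{u} \in T\}$, the set
\[
T_0 \;\cup\; \bigcup_{u \in U'} C_u \;\cup\; B_H(\RR(H), Mq + A)
\]
separates $\SSS(H)$ from $\TTT(H)$ in $H$. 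For each $t \in T_0$ I would choose $v_t \in V(G)$ with $d_H(f(v_t), t) \leq A$ via \ref{quasiisom:2}, and set $X := \{v_t : t \in T_0\} \cup U'$, so that $|X| \leq m - 1 < m$.

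The coarse-pullback step then shows that $B_G(X \cup \{\RR(G)\}, \ell)$ separates $\SSS(G)$ from $\TTT(G)$ in $G$ for a suitable $\ell = \ell(M, A, r, q)$, mirroring the proof of \cref{lem:QIPreservesSeps}: any $\SSS(G)$--$\TTT(G)$ path in $G$ avoiding such a ball would, after expanding each edge into a shortest $H$-path between the images of its endpoints, yield an $\SSS(H)$--$\TTT(H)$ walk in $H$ avoiding the above separator, a contradiction. Once such an $\ell$ is produced, \cref{lem:NoSmallSeparator} applied to $X$ (valid because $|X| < m$) yields an $\SSS(G)$--$\TTT(G)$ path at distance more than $\ell$ from $X \cup \{\RR(G)\}$, contradicting the existence of the separator. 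The main obstacle will be the arithmetic of this pullback step: the resulting $\ell$ must satisfy $\ell \leq d/2$ and $\ell \leq (h - 2)/2$ in order for \cref{lem:NoSmallSeparator} to apply, and the specific choices $d = 4rM((2r+q)M + (6r+1)A + 1)$ and $h = d + 2$ made at the start of the proof are calibrated precisely so that the quasi-isometric losses contributed by the radius-$r$ balls $C_u$, the radius-$(Mq+A)$ ball around $\RR(H)$, and the $M+A$ slack from edge expansion all fit within this bound.
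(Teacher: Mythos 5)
Your proposal is correct and follows essentially the same route as the paper: contract the disjoint balls $B_H(f(u),r)$, apply Menger's theorem in the contracted graph minus the root-ball, translate the resulting separator of size less than $m$ back to a vertex set $X$ in $G$, and contradict \cref{lem:NoSmallSeparator}; the paper merely packages the contraction as a $(2r,0)$-quasi-isometry composed with $f$, whereas you handle the contraction vertices in the separator directly. The only point worth tightening is your assertion that each $C_u$ is connected in $H_0$: this needs the additional observation that $B_H(f(u),r)$ is disjoint from $B_H(\RR(H),Mq+A)$, which follows from $d_G(u,\RR(G))\geq h$ and \ref{quasiisom:1} exactly as in the paper.
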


    \begin{claimproof}
        Set $q':=Mq+A$.
        Let $H'$ be obtained from $H$ by contracting $B_G(f(u), r)$ down to a vertex $h_u$ for every $u \in U$. Note that the contraction vertices $h_u$ are pairwise distinct by \ref{itm:Nabla:FarApart}, \ref{itm:Nabla:FarApart:2} and \ref{quasiisom:1} of $f$. Note further that $H$ is $(2r,0)$-quasi-isometric to $H'$, as witnessed by the map $g:V(H) \to V(H')$ that maps every $h \in V(H) \setminus B_H(f(U),r)$ to $h$ and every $h \in B_H(f(U), r)$ to the unique contraction vertex $h_u$ such that $h \in B_H(f(u), r)$.

        Then $f':=g \circ f$ is an $(M',A')$-quasi-isometry from $G$ to $H'$ where $M':= 2rM$ and $A':= 2rA$. In particular, we may define $\SSS(H'), \TTT(H')$ and $\RR(H')$ analogously for $H'$, e.g.\ $\SSS(H') := f'(\SSS(G))$. Set also $B := B_{H'}(\RR(H'), q')$.
        We claim that there are $m$ pairwise disjoint $\SSS(H')$--$\TTT(H')$ paths $P'_1, \dots, P'_m$ in $H'-B$.
        We then conclude the proof of the claim as follows. Since every $B_H(f(u), r)$ is connected by \cref{lem:QIPreservesConn} and because $r \geq M+A$, we obtain for every $i \leq m$ a path $P_i$ in $H$ by replacing each contraction vertex~$h_u$ in~$P'_i$ with a path through $B_H(f(u),r)$. Since the balls $B_H(f(u), r)$ are pairwise disjoint as shown above, the paths $P_i$ are pairwise disjoint. Moreover, every $B_H(f(u), r)$ meets at most one~$P_i$. Finally, observe that $d_G(u,\RR(G)) \geq h \geq M(q'+r+A+1)$ for all $u \in U$, which implies by \ref{quasiisom:1} that $d_H(f(u), \RR(H)) \geq q'+r+1$, and thus the $P_i$ lie in $H-B_H(\RR(H),q')$. Hence, the $P_i$ are as desired.
    
        Now suppose for a contradiction that we cannot find $m$ pairwise disjoint $\SSS(H')$--$\TTT(H')$ paths in $H' - B$. Then by Menger's theorem, there is a set $Y \subseteq V(H')\setminus B$ of size at most $m-1$ that intersects all $\SSS(H')$--$\TTT(H')$ paths in $H'-B$.     
        
        Since $f'$ is an $(M',A')$-quasi-isometry, there exists for every $y \in Y$ some $x \in V(G)$ such that $d_{H'}(y, f'(x)) \leq A'$, we fix such a vertex $x$ for every $y \in Y$, and let $X$ comprise all of them. 

        Set $\ell := M'(M'+3A'+q'+1)$. By \cref{lem:NoSmallSeparator} and because $h = 2\ell+2$ and $d = 2\ell$, there is an $\SSS(G)$--$\TTT(G)$ path $P$ in $G$ such that $d_G(P, X \cup \{\RR(G)\}) \geq \ell$. 
        Since $B_{H'}(f(P), M'+A')$ is connected by \cref{lem:QIPreservesConn}, there exists an $s$--$t$ path $Q$ in $H'[B_{H'}(f(P), M'+A')]$ where $s,t$ are the images under $f'$ of the endvertices of $P$ in $\SSS(G)$ and $\TTT(G)$, respectively. In particular, $s \in \SSS(H')$ and $t \in \TTT(H')$. 

        To obtain the desired contradiction, in remains to show that $Q$ avoids $Y \cup B$. We have
        \begin{align*}
        d_{H'}(Q, \RR(H')) &\geq d_{H'}(f'(P), f'(\RR(G))) - (M'+A')\\
        \intertext{since $Q \in B_{H'}(f'(P), M'+A')$, and hence, by \ref{quasiisom:1} and since $f'$ is an $(M',A')$-quasi-isometry,}
        d_{H'}(Q, \RR(H')) &\geq \Big(\frac{1}{M'}d_G(P, \RR(G))-A'\Big) -(M'+A') \geq \frac{1}{M'}(M'(M'+3A'+q+1))-M'-2A' > q'.
        \end{align*}
        Thus, $Q$ avoids $B = B_{H'}(f'(\RR(G)), q')$. Moreover, for every $y \in Y$ we have
        \begin{align*}
            d_{H'}(Q, y) &\geq d_{H'}(f'(P), f'(x)) - (M'+A')-A',\\
            \intertext{where $x \in X \subseteq V(G)$ is such that $d_{H'}(y, f'(x)) \leq A'$, and hence}
            d_{H'}(Q, y) &\geq \Big(\frac{1}{M'}d_G(P, x) - A'\Big) - M' -2 A' \geq \frac{1}{M'}(M'(M'+3A'+q+1))-M'-3A' \geq 1.
        \end{align*}
        Thus, $Q$ also avoids $Y$, which contradicts that $Y$ separates $\SSS(H')$ and $\TTT(H')$ in $H'-B$.
    \end{claimproof}

    Let $P_1, \dots, P_m$ be pairwise disjoint $\SSS(H)$--$\TTT(H)$ paths in $H - B_H(\RR(H), Mq+A)$ as provided by \cref{claim:ManySTPathsInH}; in particular, for every $u \in U$, at most one $P_i$ meets $B_H(f(u), r)$.

    Let $S'_i := B_H(f(S_i), r)$ for every $i \in [N]$.
    Then, by \ref{itm:Nabla:FarApart} and \ref{quasiisom:1} of $f$,
    \begin{enumerate}[label=\rm{(\arabic*)}]
        \setcounter{enumi}{1}
        \item \label{itm:Delta:FarApart} $d_H(S'_i, S'_j) \geq 1$ for all $i \neq j \in [N]$.
    \end{enumerate}

    For every $i \in [N]$, recall that $\ell_i$ is the (unique) leaf of $B(G)$ that is contained in $S_i$ but not in $\SSS(\Delta_i)$. Let $\ell'_i$ be the (unique) leaf of $S_i$ that is not $\ell_i$.
    By \cref{claim:ManySTPathsInH}, there are for every $i \in [N]$ at most two paths $P_j$ that meet $B_H(\{f(\ell_i), f(\ell'_i)\}, r)$. Hence, as $m = n^2 = 2N+n$ and we have $m$ paths~$P_j$, there are at least $n$ paths amongst the $P_j$ which avoid $\bigcup_{i \in [N]} B_H(\{f(\ell_i), f(\ell'_i)\},r)$. Without loss of generality assume that $P_1, \dots, P_n$ avoid $\bigcup_{i \in [N]} B_H(\{f(\ell_i), f(\ell'_i)\}, r)$.

    We now obtain a model of $K_n$ in $H$ as follows. Its branch sets $U_j$ are the sets $V(P_j)$ for $j \in [n]$. To define the branch paths, let $\{e_1, \dots, e_N\}$ be an enumeration of the edges in $K_n$. 
    Then for every edge $e_j = ik \in E(K_n)$ we choose a branch path $E_{e_j}$ as follows. Since $S'_j$ separates $\SSS(H)$ from $\TTT(H)$ in $H - B_H(\RR(H), Mq+A)$ by \cref{obs:SubtrianglesSeparateST} and \cref{lem:QIPreservesSeps}, both $P_i$ and $P_k$ meet $S'_j$. In particular, by the definition of $S'_j$ and because $P_i,P_k$ avoid $B_H(\{f(\ell_j),f(\ell'_i)\},r)$, both $P_i$ and $P_k$ meet $B_H(f(\SSS(\Delta_j),r)$. Let $a_i, a_k$ be vertices of $\SSS(\Delta_j)$ such that $P_i$ meets $B_H(f(a_i),r)$ and $P_k$ meets $B_H(f(a_k), r)$.
    Let $Q_{ji}, Q_{jk}$ be the spines of $G$ joining $\ell_j$ to $a_i, a_k$, respectively. By \cref{lem:QIPreservesConn}, $X_j := B_H(f(Q_{ji}) \cup f(Q_{jk}), r)$ is connected, and it is separated from $f(G-(Q_{ji} \cup Q_{jk}))$ by $B_H(\{f(\ell_j), f(a_i), f(a_k)\}, r)$ by \cref{lem:QIPreservesSeps}. Hence, no path~$P_a$ other than $P_i, P_k$ meets $X_i$ by \cref{claim:ManySTPathsInH}, so we can choose the branch path $E_{e_k}$ connecting $P_i$ and~$P_k$ inside~$X_j$. By the previous argument, $E_{e_j}$ avoids all other branch sets $V(P_a)$. Moreover, by \ref{itm:Nabla:FarApart}, the subsets~$X_j$ are disjoint, and hence also the branch paths $E_{e_j}$ are disjoint. Thus, we have found our desired model of $K_n$ in $H$. 
\end{proof}

\section{Counterexample to the coarse Erd\H{o}s-P\'{o}sa property for planar graphs}

In this section we prove \cref{main:ErdosPosaPlanar}, which we restate here for convenience:

\begin{customthm}{\cref*{main:ErdosPosaPlanar}} \label{main:ErdosPosaPlanar:copy}
    There exists a planar graph $X$ such that for every $K,D,n \in \N$ there is a graph $G'$ with the following properties:
    \begin{enumerate}[label=\rm{(\roman*)}]
        \item \label{itm:ErdosPosa:NoSmallHittingSet} For every set $Z$ of at most $n$ vertices of $G'$, there is a $K$-fat model of $X$ in $G'$ that avoids $B_{G'}(Z,D)$.
        \item \label{itm:ErdosPosa:NoThreeFatXs} $G'$ does not contain three $3$-fat models of $X$ that are pairwise at distance at least $3$ from each other.
    \end{enumerate}
\end{customthm}

We first describe the graph $X$ and the graphs $G'$, and then show that they satisfy the statement of \cref{main:ErdosPosaPlanar:copy}.

\begin{construction}[The graph $X$] \label{constr:X}
    \begin{figure}[ht]
        \centering
        \includegraphics[width=0.34\linewidth]{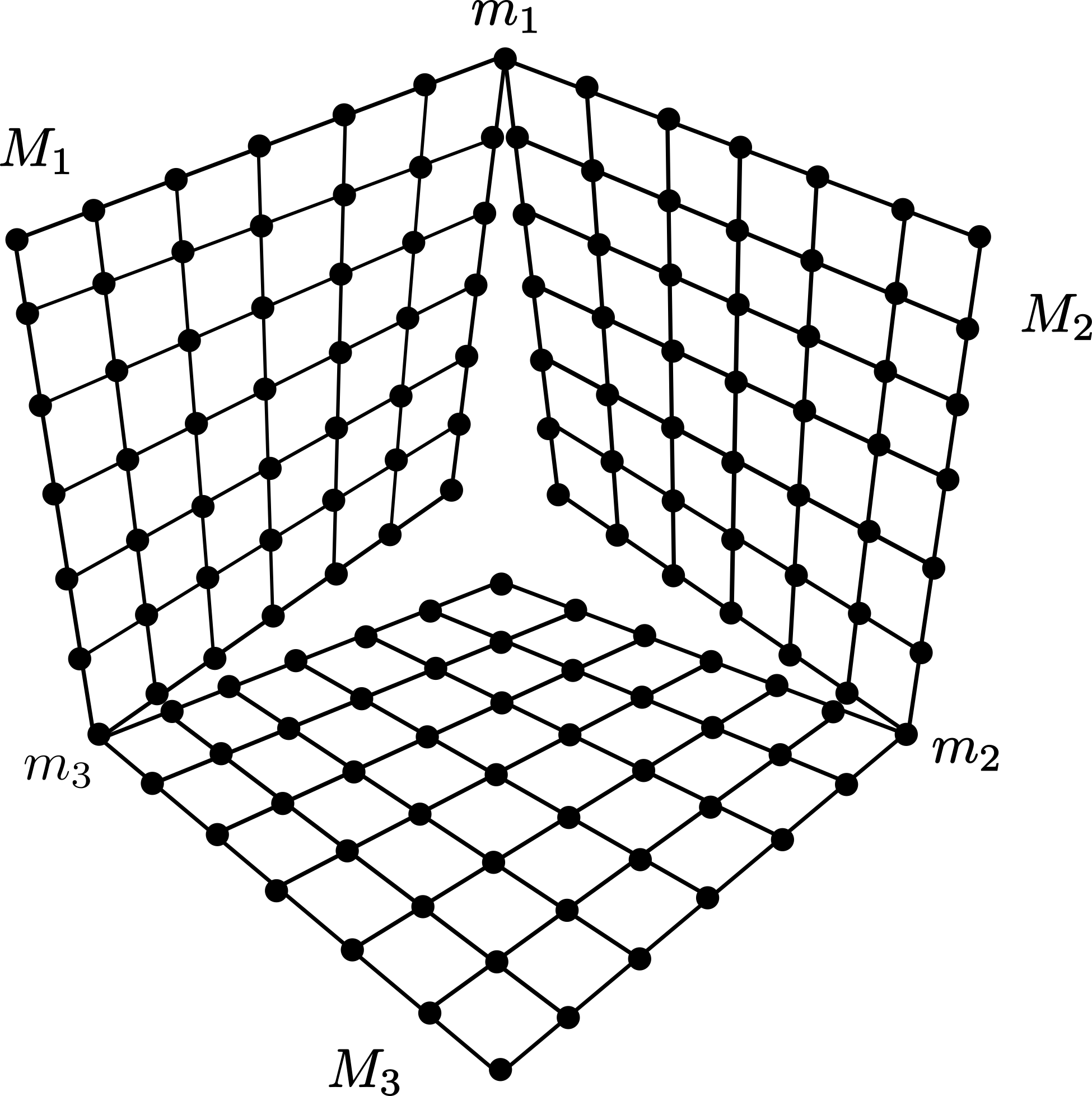}
        \caption{Depicted is the graph $X$ except that each of the three grids $M_1, M_2, M_3$ should be a $(154\times 154)$-grid (instead of a $(7 \times 7)$-grid).}
        
        \label{fig:ErdosPosa:X}
    \end{figure}
    Let $M_1, M_2, M_3$ be three disjoint copies of the $(154 \times 154)$-grid. Let \defn{$X$} be the graph obtained from $M_1\; \dot\cup\; M_2\; \dot\cup\; M_3$ by identifying, for every $i \in [3]$, the vertex of $M_i$ corresponding to $(154,154)$ with the vertex of $M_{i+1 \pmod{3}}$ corresponding to $(1,1)$ (see \cref{fig:ErdosPosa:X}). 

    We denote the identification vertices by $m_1,m_2,m_3$ so that $m_1$ is the (unique) vertex in $M_1 \cap M_2$, and $m_2$ is the (unique) vertex in $M_2 \cap M_3$, and $m_3$ is the (unique) vertex in $M_3 \cap M_1$ (see \cref{fig:ErdosPosa:X}).
\end{construction}

\begin{construction}[The graphs $G'$] \label{constr:Gprime}
    Let $h,d,m \in \N$. We define a graph \defn{$G'_{h,d,m}$} as follows. Let $L_1, \dots, L_m$ be disjoint copies of the subdivision of the $(154 \times 154)$-grid in which every edge is subdivided precisely $d$ times. Further, let $G$ be a copy of $G_{h,d,m}$. We (re-)enumerate the vertices in $\SSS(G)$ from `top' to `bottom', while we keep the enumeration of the the vertices in $\TTT(G)$ as it is (that means from `bottom' to `top') (see \cref{fig:ErdosPosa:G}). 
    Let $G'_{h,d,m}$ be the graph obtained from the disjoint union of $G$ and the $L_i$'s, for $i \in [m]$, by identifying, for every $i \in [m]$, the vertex of $L_i$ which correspond to the vertex $(1,1)$ of the $(154 \times 154)$-grid with $s_{i} \in \SSS(G)$, and the the vertex of $L_i$ that corresponds to $(154, 154)$ with $t_{i} \in \TTT(G)$ (see \cref{fig:ErdosPosa:G}).

    We use terms defined for $G = G_{h,d,m}$ also for $G'_{h,d,m}$, e.g.\ $\RR(G'_{h,d,m})$ is the root $\RR(G)$ of $G \subseteq G'_{h,d,m}$. 
\end{construction}

    \vspace{-5em}
    \begin{figure}[ht]
        \centering
        \begin{tikzpicture}[scale=0.49,auto=left]

\definecolor{dgreen}{rgb}{0,0.7,0}
\definecolor{dgrey}{rgb}{0.5,0.5,0.5}
\definecolor{lgrey}{rgb}{0.6,0.6,0.6}
\definecolor{llgrey}{rgb}{0.7,0.7,0.7}

\foreach \x in {2,4,6,8,10,12,14} {
\path [fill=llgrey,opacity=0.5]
(\x,0) to [bend right=20] (\x,1) to [bend right=20] (\x,2) to [bend right=20] (\x+2,2) to [bend right=20] (\x+2,1) to [bend right=20] (\x+2,0) to [bend right=20] (\x,0);
\draw[dgrey,line width=0.7] (\x,0) to [bend right=20] (\x,1);
\draw[dgrey,line width=0.7] (\x,1) to [bend right=20] (\x,2);
\draw[dgrey,line width=0.7] (\x,2) to [bend right=20] (\x+2,2);
\draw[dgrey,line width=0.7] (\x+2,2) to [bend right=20] (\x+2,1);
\draw[dgrey,line width=0.7] (\x+2,1) to [bend right=20] (\x+2,0);
\draw[dgrey,line width=0.7] (\x+2,0) to [bend right=20] (\x,0);
}

\tikzstyle{every node}=[gray, inner sep=1.5pt, fill=gray,circle,draw]

\node[orange,ultra thick] (v2) at (2,0) {};
\node (v4) at (4,0) {};
\node (v6) at (6,0) {};
\node (v8) at (8,0) {};
\node (v10) at (10,0) {};
\node (v12) at (12,0) {};
\node (v14) at (14,0) {};
\node[blue, ultra thick] (v16) at (16,0) {};

\node[ForestGreen,ultra thick] (w2) at (2,1) {};
\node (w4) at (4,1) {};
\node (w6) at (6,1) {};
\node (w8) at (8,1) {};
\node (w10) at (10,1) {};
\node (w12) at (12,1) {};
\node (w14) at (14,1) {};
\node[magenta, ultra thick] (w16) at (16,1) {};

\node[magenta,ultra thick] (u2) at (2,2) {};
\node (u4) at (4,2) {};
\node (u6) at (6,2) {};
\node (u8) at (8,2) {};
\node (u10) at (10,2) {};
\node (u12) at (12,2) {};
\node (u14) at (14,2) {};
\node[ForestGreen, ultra thick] (u16) at (16,2) {};

\node[blue, ultra thick] (t3) at (3,4) {};
\node (t7) at (7,4) {};
\node (t11) at (11,4) {};
\node[orange, ultra thick] (t15) at (15,4) {};

\draw[dgrey,dotted,line width=1.2] (t3) to [bend left=20] (v6);
\draw[dgrey,dotted,line width=1.2] (t3) to [bend left=20] (u6);
\draw[dgrey,dotted,line width=1.2] (t3) to [bend left=20] (w6);
\draw[dgrey,dotted,line width=1.2] (t7) to [bend left=20] (v10);
\draw[dgrey,dotted,line width=1.2] (t7) to [bend left=20] (w10);
\draw[dgrey,dotted,line width=1.2] (t7) to [bend left=20] (u10);
\draw[dgrey,dotted,line width=1.2] (t11) to [bend left=20] (v14);
\draw[dgrey,dotted,line width=1.2] (t11) to [bend left=20] (w14);
\draw[dgrey,dotted,line width=1.2] (t11) to [bend left=20] (u14);

\draw[dgrey,dotted,line width=1.2] (t7) to [bend right=20] (v4);
\draw[dgrey,dotted,line width=1.2] (t7) to [bend right=20] (w4);
\draw[dgrey,dotted,line width=1.2] (t7) to [bend right=20] (u4);
\draw[dgrey,dotted,line width=1.2] (t11) to [bend right=20] (v8);
\draw[dgrey,dotted,line width=1.2] (t11) to [bend right=20] (w8);
\draw[dgrey,dotted,line width=1.2] (t11) to [bend right=20] (u8);
\draw[dgrey,dotted,line width=1.2] (t15) to [bend right=20] (v12);
\draw[dgrey,dotted,line width=1.2] (t15) to [bend right=20] (w12);
\draw[dgrey,dotted,line width=1.2] (t15) to [bend right=20] (u12);

\node (s5) at (5,6) {};
\node (s13) at (13,6) {};

\draw[dgrey] (t3) -- (s5)--(t7);
\draw[dgrey] (t11) -- (s13)--(t15);

\node (r9) at (9,8) {};

\draw[dgrey] (s5) -- (r9)--(s13);

\draw[very thick] (4.2,6) to [bend right=28] (1.5,-0.5);
\draw [very thick] plot [smooth, tension=0.4] coordinates {(4.2,6) (9,8.5) (13.8,6)};
\draw[very thick] (13.8,6) to [bend left=28] (16.5,-0.5);
\draw[very thick] (1.5,-0.5) -- (16.5,-0.5);

\draw[dotted,blue,very thick] (3,4) to [bend left=80] (14,8);
\draw[dotted,blue,very thick] (14,8) to [bend left=60] (16,0);

\draw[dotted,blue,very thick] (5,7.9) to [bend left=50] (18.5,9);
\draw[dotted,blue,very thick] (9,9.6) to [bend left=40] (18.7,7);
\draw[dotted,blue,very thick] (12.4,9.4) to [bend left=40] (18.3,4);

\draw[dotted,blue,very thick] (4.7,9.2) to [bend left=60] (15.8,6.8);
\draw[dotted,blue,very thick] (9,12) to [bend left=50] (17,5);
\draw[dotted,blue,very thick] (13,13) to [bend left=45] (17.3,3.4);

\draw[dotted,blue,very thick] (3,4) to [bend left=80] (18,11);
\draw[dotted,blue,very thick] (18,11) to [bend left=30] (16,0);

\node[blue] (g1) at (14,8) {};
\node[blue] (g2) at (18,11) {};
\node[blue] (g3) at (5,7.9) {};
\node[blue] (g4) at (9.1,9.6) {};
\node[blue] (g5) at (12.4,9.4) {};

\node[blue] (g6) at (4.7,9.2) {};
\node[blue] (g7) at (9,12.05) {};
\node[blue] (g8) at (13,13) {};
\node[blue] (g9) at (18.55,9) {};
\node[blue] (g10) at (18.75,7) {};

\node[blue] (g6) at (18.3,4) {};
\node[blue] (g7) at (15.8,6.8) {};
\node[blue] (g8) at (16.9,5) {};
\node[blue] (g9) at (17.2,3.4) {};

\node[blue] (g6) at (9.15,10.95) {};
\node[blue] (g7) at (12.7,11.5) {};
\node[blue] (g8) at (16.25,10.65) {};

\node[blue] (g6) at (12.3,10.4) {};
\node[blue] (g7) at (14.95,10) {};
\node[blue] (g8) at (17.35,8.55) {};

\node[blue] (g6) at (14.4,9) {};
\node[blue] (g7) at (16.5,7.75) {};
\node[blue] (g8) at (17.8,5.85) {};


\draw[dotted,orange,very thick] (15,4) to [bend right=15] (15.5,5.5);
\draw[dotted,orange,very thick] (15,4) to [bend right=15] (16,5);

\draw[dotted,ForestGreen,very thick] (16,2) to [bend right=15] (16.4,3.5);
\draw[dotted,ForestGreen,very thick] (16,2) to [bend right=15] (16.9,3.3);

\draw[dotted,magenta,very thick] (16,1) to [bend right=15] (16.7,2.2);
\draw[dotted,magenta,very thick] (16,1) to [bend right=15] (16.95,2.1);

\draw[dotted,magenta,very thick] (2,2) to [bend left=15] (1.6,3.5);
\draw[dotted,magenta,very thick] (2,2) to [bend left=15] (1.1,3.3);

\draw[dotted,ForestGreen,very thick] (2,1) to [bend left=15] (1.2,2.3);
\draw[dotted,ForestGreen,very thick] (2,1) to [bend left=15] (0.9,2.05);

\draw[dotted,orange,very thick] (2,0) to [bend left=15] (1.1,1.2);
\draw[dotted,orange,very thick] (2,0) to [bend left=15] (0.8,0.9);

\tikzstyle{every node}=[]
\draw[gray] (9,6.9) node [] {$r=\RR(G)$};
\draw[orange,below right] (2,0.3) node [] {$s_4$};
\draw[magenta,above right] (2,1.85) node []  {$s_2$};
\draw[ForestGreen, right] (w2) node [] {$s_3$};
\draw[blue,below] (3,3.8) node [] {$s_1$};

\draw[blue,below left] (16,0.3) node [] {$t_1$};
\draw[ForestGreen,above left] (16,1.85) node []  {$t_3$};
\draw[magenta, left] (w16) node [] {$t_2$};
\draw[orange,below] (15,3.8) node [] {$t_4$};

\draw[blue] (19.5,10) node [] {$L_1$};
\draw (11.6,8) node [] {$G$};

\end{tikzpicture}
        \caption{Depicted is the graph $G_{2, d, 4}$ except that there should also be a subdivided grid~$L_i$ between $s_i$ and $t_i$ for every $i \in \{2,3,4$\}, and every $L_i$ should be a subdivided $(154 \times 154)$-grid (instead of a $(5\times 5)$-grid). Every dotted path has length $d+1$.}
        \label{fig:ErdosPosa:G}
    \end{figure}

We prove \cref{main:ErdosPosaPlanar} with the graph $X$ from \cref{constr:X} and with the graphs $G'$ defined in \cref{constr:Gprime}. We divide the proof into two lemmas.

\begin{lemma} \label{lem:ErdosPosa:NoSmallHittingSet}
    Let $X$ be the graph from \cref{constr:X}. For every $K,D,n \in \N$ there exist some $h,d,m \in \N$ such that for every set $Z$ of at most $n$ vertices of $G' := G'_{h,d,m}$, there is a $K$-fat model of $X$ in $G'$ that avoids $B_{G'}(Z,D)$. 
\end{lemma}

\begin{proof}
    Let $K,D,n \in \N$ be given, and let $h = h(K,D) := 2\cdot\max\{K,D\}+2$, $d = d(K,D) := \max\{3K,2D\}$ and $m = m(n):= n+3$. Set $G' := G'_{h,d,m}$, and let $G$ be the (unique) subgraph of $G'$ that is isomorphic to~$G_{h,d,m}$.

    Since $m = n+3$ and any two vertices in $\SSS(G') \cup \TTT(G')$ have distance at least $3d+3 > 2d+2D+2$, there are at least three indices $i_1, i_2, i_3 \in [m]$ such that none of $s_{i_j}, t_{i_j}$, for $j \in [3]$, lies in $B_{G'}(Z, d+D+1)$, and moreover, none of $L_{i_1}, L_{i_2}, L_{i_3}$ meets $B_{G'}(Z,D)$. We choose $i_1, i_2, i_3$ so that $i_1 < i_2 < i_3$ and so that they are as small as possible.

    For every $j \in [3]$ there is a natural way to obtain a $K$-fat model of $M_j$ inside $L_{i_j}$: for every vertex $x \in V(M_j)$, we let $U^j_x := B_{G'}(x', K)$ where $x'$ is the vertex of $L_{i_j}$ `corresponding' to $x$. (For this recall that $M_j$ is a copy and $L_{i_j}$ is a subdivision of the $(154 \times 154)$-grid.) Since $d \geq 3K$, the sets~$U_x$ have pairwise distance at least $K$ from each other. Moreover, every edge $xy \in E(M_j)$ `corresponds' to a path of length $d \geq 3K$ between $x'$ and $x'$ in $L_{i_j}$; we let $E^j_{xy}$ be the middle segment of this path of length $d-2K-2$ (which then starts in $U_x$ and ends in $U_y$).
    Finally, we rename the branch sets and paths of the model $(\cU^2, \cE^2)$ of~$M_2$ so that $s_{i_2} \in U^2_{m_{2}}$ (instead of $U^2_{m_1}$) and $t_{i_2} \in U^2_{m_1}$ (instead of $U^2_{m_2}$).
    It is straightforward to check that $(\cU^1\cup\cU^2\cup\cU^3, \cE^1\cup\cE^2\cup\cE^3)$ is a $K$-fat model of the disjoint union $M_1\, \dot\cup\, M_2\, \dot\cup\, M_3$. 
    
    To conclude the proof, it now suffices to make each of the sets $U^1_{m_1} \cup U^2_{m_1}$, $U^2_{m_2} \cup U^3_{m_2}$ and $U^3_{m_3} \cup U^1_{m_3}$ connected (while keeping them at distance at least $K$). For this, we have to find three paths $P_1,P_2,P_3$ that have pairwise distance at least~$K$ from each other so that $P_1$ starts in $t_{i_1} \in U^1_{m_1}$ and ends in $t_{i_2} \in U^2_{m_1}$, and $P_2$ starts in $s_{i_2} \in U^2_{m_2}$ and ends in $s_{i_3} \in U^3_{m_2}$, and $P_3$ starts in $t_{i_3} \in U^3_{m_3}$ and ends in $s_{i_1} \in U^1_{m_3}$ (see \cref{fig:ErdosPosa:Paths}).

    \begin{figure}[ht]
        \centering
        \begin{tikzpicture}[scale=0.49,auto=left]

\definecolor{dgreen}{rgb}{0,0.7,0}
\definecolor{dgrey}{rgb}{0.5,0.5,0.5}
\definecolor{lgrey}{rgb}{0.6,0.6,0.6}
\definecolor{llgrey}{rgb}{0.7,0.7,0.7}

\foreach \x in {8,12,18} {
\draw[dgrey,line width=0.7] (\x,0) to [bend right=20] (\x,1.5);
\draw[dgrey,line width=0.7] (\x,1.5) to [bend right=20] (\x,3);
\draw[dgrey,line width=0.7] (\x,3) to [bend right=20] (\x,4.5);
\draw[dgrey,line width=0.7] (\x,4.5) to [bend right=5] (\x+4,4.5);
\draw[dgrey,line width=0.7] (\x+4,4.5) to [bend right=20] (\x+4,3);
\draw[dgrey,line width=0.7] (\x+4,3) to [bend right=20] (\x+4,1.5);
\draw[dgrey,line width=0.7] (\x+4,1.5) to [bend right=20] (\x+4,0);
\draw[dgrey,line width=0.7] (\x+4,0) to [bend right=5] (\x,0);
}

\draw[dgrey,line width=0.7] (2,0) to [bend right=20] (2,1.5);
\draw[dgrey,line width=0.7] (2,1.5) to [bend right=20] (2,3);
\draw[dgrey,line width=0.7] (2,3) to [bend right=20] (2.2,4.5);
\draw[dgrey,line width=0.7] (2.2,4.5) to [bend right=5] (8,4.5);
\draw[dgrey,line width=0.7] (8,4.5) to [bend right=20] (8,3);
\draw[dgrey,line width=0.7] (8,3) to [bend right=20] (8,1.5);
\draw[dgrey,line width=0.7] (8,1.5) to [bend right=20] (8,0);
\draw[dgrey,line width=0.7] (8,0) to [bend right=0] (2,0);

\draw[dgrey,line width=0.7] (32,0) to [bend left=20] (32,1.5);
\draw[dgrey,line width=0.7] (32,1.5) to [bend left=20] (32,3);
\draw[dgrey,line width=0.7] (32,3) to [bend left=20] (31.8,4.5);
\draw[dgrey,line width=0.7] (31.8,4.5) to [bend left=20] (31.4,6);
\draw[dgrey,line width=0.7] (31.4,6) to [bend left=5] (26,6);
\draw[dgrey,line width=0.7] (26,6) to [bend left=8] (26,4.5);
\draw[dgrey,line width=0.7] (26,4.5) to [bend left=20] (26,3);
\draw[dgrey,line width=0.7] (26,3) to [bend left=20] (26,1.5);
\draw[dgrey,line width=0.7] (26,1.5) to [bend left=20] (26,0);
\draw[dgrey,line width=0.7] (26,0) to [bend left=0] (32,0);

\draw[dgrey,line width=0.7] (22,0) to [bend right=20] (22,1.5);
\draw[dgrey,line width=0.7] (22,1.5) to [bend right=20] (22,3);
\draw[dgrey,line width=0.7] (22,3) to [bend right=20] (22,4.5);
\draw[dgrey,line width=0.7] (22,4.5) to [bend right=20] (22,6);
\draw[dgrey,line width=0.7] (22,6) to [bend right=8] (26,6);
\draw[dgrey,line width=0.7] (26,6) to [bend right=20] (26,4.5);
\draw[dgrey,line width=0.7] (26,4.5) to [bend right=20] (26,3);
\draw[dgrey,line width=0.7] (26,3) to [bend right=20] (26,1.5);
\draw[dgrey,line width=0.7] (26,1.5) to [bend right=20] (26,0);
\draw[dgrey,line width=0.7] (26,0) to [bend right=20] (26,0);
\draw[dgrey, line width=0.7] (26,0) to [bend right=8] (22,0);

\foreach \x in {2,4} {
\draw[dgrey,line width=0.7] (\x,0) to [bend right=20] (\x,1.5) to [bend right=8] (\x+2,1.5) to [bend right=20] (\x+2,0) to [bend right=8] (\x,0);
}

\foreach \x in {28,30} {
\draw[dgrey,line width=0.7] (\x,0) to [bend right=20] (\x,1.5) to [bend right=20] (\x,3) to [bend right=8] (\x+2,3) to [bend right =20] (\x+2,1.5) to [bend right=20] (\x+2,0) to [bend right=8] (\x,0);
}

\tikzstyle{every node}=[gray, inner sep=1.5pt, fill=gray,circle,draw]

\node[ForestGreen,ultra thick] (v2) at (2,0) {};
\node[black] (w2) at (2,1.5) {};
\node[magenta,ultra thick] (u2) at (2,3) {};
\node[black] (x2) at (2.2,4.5) {};
\node[blue, ultra thick] (y2) at (2.6,6) {};
\node[black] (z2) at (3.2,7.5) {};

\node[black] (v3) at (32,0) {};
\node[blue, ultra thick] (w3) at (32,1.5) {};
\node[black] (u3) at (32,3) {};
\node[magenta, ultra thick] (x3) at (31.8,4.5) {};
\node[black] (y3) at (31.4,6) {};
\node[ForestGreen, ultra thick] (z3) at (30.8,7.5) {};

\node () at (8,0) {};
\node () at (8,1.5) {};
\node () at (8,3) {};
\node () at (8,4.5) {};

\node () at (12,0) {};
\node () at (12,1.5) {};
\node[ForestGreen, ultra thick] () at (12,3) {};
\node () at (12,4.5) {};

\node () at (16,0) {};
\node[ForestGreen,ultra thick] () at (16,1.5) {};
\node () at (16,3) {};
\node () at (16,4.5) {};

\node () at (18,0) {};
\node () at (18,1.5) {};
\node[ForestGreen,ultra thick] () at (18,3) {};
\node () at (18,4.5) {};

\node () at (22,0) {};
\node[ForestGreen, ultra thick] () at (22,1.5) {};
\node () at (22,3) {};
\node () at (22,4.5) {};
\node () at (22,6) {};

\node () at (26,0) {};
\node () at (26,1.5) {};
\node () at (26,3) {};
\node () at (26,4.5) {};
\node () at (26,6) {};

\node () at (4,0) {};
\node[magenta,ultra thick] () at (4,1.5) {};
\node () at (6,0) {};
\node[magenta,ultra thick] () at (6,1.5) {};

\node[blue,ultra thick] () at (30,0) {};
\node () at (30,1.5) {};
\node () at (30,3) {};
\node () at (28,0) {};
\node[blue,ultra thick] () at (28,1.5) {};
\node () at (28,3) {};

\draw[ForestGreen,very thick] (y2) to [bend left=20] (12,3);
\draw[ForestGreen,very thick] (z3) to [bend right=38] (22,1.5);
\draw[ForestGreen,very thick] (12,3) to [bend right=5] (16,1.5);
\draw[ForestGreen,very thick,dotted] (16,1.5) to [bend right=10] (18,3);
\draw[ForestGreen,very thick] (18,3) to [bend left=7] (22,1.5);

\draw[magenta,very thick] (2,3) to [bend left=20] (6,1.5) to [bend left=30] (4,1.5) to [bend left=20] (2,0);
\node[ForestGreen,ultra thick] () at (2,0) {};

\draw[blue,very thick] (x3) to [bend right=20] (28,1.5) to [bend right=20] (30,0) to [bend left=15] (32,1.5);

\draw[very thick] (3,8) to [bend right=20] (1.5,-0.5);
\draw [very thick] plot [smooth, tension=0.4] coordinates {(3,8) (17,11) (31,8)};
\draw[very thick] (31,8) to [bend left=20] (32.5,-0.5);
\draw[very thick] (1.5,-0.5) -- (32.5,-0.5);

\tikzstyle{every node}=[]
\draw[blue,below right] (y2) node [] {$s_{i_1}$};
\draw[magenta,above right] (2.1,3) node []  {$s_{i_2}$};
\draw[ForestGreen,above right] (2,0.2) node [] {$s_{i_3}$};

\draw[ForestGreen,below] (z3) node [] {$t_{i_3}$};
\draw[magenta,above left] (x3) node []  {$t_{i_2}$};
\draw[blue,above left] (32,1.5) node [] {$t_{i_1}$};

\draw[ForestGreen] (8,6) node [] {\large $P_3$};
\draw[magenta] (5,3.2) node [] {\large $P_2$};
\draw[blue] (29,4) node [] {\large $P_1$};
\draw[dgrey] (17.1,3.5) node [] {\LARGE $\dots$};
\draw[dgrey] (17.1,1) node [] {\LARGE $\dots$};
\draw[dgrey] (7,0.75) node [] {\LARGE $\dots$};
\draw[dgrey] (27.1,0.75) node [] {\LARGE $\dots$};
\draw[dgrey] (27.1,2.25) node [] {\LARGE $\dots$};

\draw[ForestGreen,below left] (12,3) node [] {$v_3$};
\draw[ForestGreen,below left] (16,1.5) node [] {$v_4$};
\draw[ForestGreen,above right] (18,3) node [] {$v_{c'}$};
\draw[ForestGreen,below left] (22,1.5) node [] {$v_{c'+1}$};

\draw[dgrey] (3,-1.2) node [] {$H^{j_2-1}_1$};
\draw[dgrey] (5,-1.2) node [] {$H^{j_2-1}_2$};
\draw[dgrey] (29,-1.2) node [] {$H^{i_2-1}_{c''-1}$};
\draw[dgrey] (31,-1.2) node [] {$H^{i_2-1}_{c''}$};

\draw[dgrey] (5,5) node [] {$H^{j_1-1}_1$};
\draw[dgrey] (11,5) node [] {$H^{j_1-1}_2$};
\draw[dgrey] (14,5) node [] {$H^{j_1-1}_3$};
\draw[dgrey] (20,5) node [] {$H^{j_1-1}_{c'}$};
\draw[dgrey] (24,6.5) node [] {$H^{i_1-1}_{c-1}$};
\draw[dgrey] (29,6.5) node [] {$H^{i_1-1}_c$};

\draw (25,10.5) node [] {\large $G$};

\end{tikzpicture}
        \caption{Depicted are the paths $P_1, P_2, P_3$ in $G$ from the proof of \cref{lem:ErdosPosa:NoSmallHittingSet}.}
        \label{fig:ErdosPosa:Paths}
    \end{figure}

    We first choose the path $P_3$ between $s_{i_1}$ and $t_{i_3}$. 
    Set $j_1 := m-i_1+1$ (so $j_1$ is the index of $s_{i_1}$ in the enumeration of $\SSS(G_{h,d,m})$ from `bottom' to `top' (instead of `top' to `bottom')).
    Since the situation is symmetric in $s_{i_1}$ and $t_{i_3}$, we may assume without loss of generality that $j_1 \leq i_3$. Let $c$  
    be the number of subgraphs $H_k^{i_3-1}$ of $G$ that are isomorphic to $G_{h, d, i_3-1}$, and let $c'$ 
    be the number of subgraphs $H_k^{j_1-1}$ of $H_1^{i_3-1} \cup \dots \cup H_{c-2}^{i_3-1}$ that are isomorphic to of $G_{h,d,j_1-1}$ (see \cref{fig:ErdosPosa:Paths}). 

    Since we chose $i_1$ as small as possible, there is for every $i < i_1$ some $z \in Z$ such that $d_{G'}(z, \{s_{i_1}, t_{i_1}\}) \leq d+D+1$. This implies that at most $n-i_1+1 = j_1-3$ vertices of $Z$ can have distance less than $d+D+1$ to $H_3^{j_1-1} \cup \dots \cup H_{c'}^{j_1-1}$. Hence, for every $k \in \{3, \dots, c'+1\}$ there exists some $v_k \in V_k^{j_1-1} = \SSS(H^{j_1-1}_k)$ that avoids $B_{G'}(Z, d+D+1)$.
    By \cref{lem:NoSmallSeparator} (more precisely, statement \eqref{eq:NoSmallSep:IndHyp} in the proof of \cref{lem:NoSmallSeparator}) there is for every $k \in \{3, \dots, c'\}$ a path in $H^{j_i-1}_k$ between $v_k$ and $v_{k+1}$ that avoids $B_{G'}(Z \cup \{\RR(G')\},D)$. Concatenating these paths yields a path $P'_3$ from $V^{j_1-1}_k$ to $V^{j_1-1}_{c'+1}$. Since $s_{i_1}$ and $t_{i_3}$ avoid $B_{G'}(Z, d+D+1)$, all spines from $s_{i_1}$ to $V^{j_1-1}_{3}$ and from $t_{i_3}$ to $V^{j_1-1}_{c'+1}$ avoid $B_{G'}(Z, D)$. Hence, we may extend $P'_3$ to an $s_{i_1}$--$t_{i_3}$ path which is contained in $H^{j_1-1}_3 \cup \dots \cup H^{j_1-1}_{c'}$ together with the spines starting in $s_{i_1}$ or $t_{i_3}$ (see \cref{fig:ErdosPosa:Paths}). In particular, $P_3$ avoids $B_{G'}(Z \cup \{\RR(G')\}, D)$ and $H^{j_1-1}_1 \cup H^{i_3-1}_c$.

    Next, we choose the path $P_2$ between $s_{i_2}$ and $s_{i_3}$. For this, let $j_2 := m-i_2+1$, and let $H^{j_2-1}_1, H^{j_2-1}_2$ be the first two (left-most) subgraphs of $G$ that are isomorphic to $G_{h,d,j_2-1}$ (see \cref{fig:ErdosPosa:Paths}). Similar as above, there exist two vertices $v_2 \in V^{j_2-1}_2$ and $v_3 \in V^{j_2-1}_3$ that avoid $B_{G'}(Z, d+D+1)$. Then again by (the statement \eqref{eq:NoSmallSep:IndHyp} in the proof of) \cref{lem:NoSmallSeparator}, there exist a path in $H^{j_2-1}_1$ between $s_{i_3}$ and $v_2$ and a path in $H^{j_2-1}_2$ between $v_2$ and $v_3$ that avoid $B_{G'}(Z \cup \{\RR(G')\}, D)$. By concatenating these two paths and adding the spine from $s_{i_2}$ to $v_3$ (which avoids $B_{G'}(Z,D)$ by the choice of $s_{i_2}$), we obtain an $s_{i_2}$--$s_{i_3}$ path $P_2$ that avoids $B_{G'}(Z \cup \{\RR(G')\}, D)$, and that is contained in $H^{j_2-1}_1 \cup H^{j_2-1}_2$ (see \cref{fig:ErdosPosa:Paths}).

    Finally, we choose $P_1$ analogously to $P_2$, except that $P_1$ has endvertices $t_{i_2}$ and $t_{i_1}$, and is contained in $H^{i_2-1}_{c''-1} \cup H^{i_2-1}_{c''}$ where $c''$ is the number of subgraphs $H^{i_2-1}_k$ of $G$ that are isomorphic to $G_{h,d,i_2-1}$ (and hence $H^{i_2-1}_{c''-1}, H^{i_2-1}_{c''}$ are the last two (right-most) such subgraphs of $G$; see \cref{fig:ErdosPosa:Paths}). In particular, $P_1$ is contained in $H^{i_2-1}_{c''-1} \cup H^{i_2-1}_{c''}$.

    Since $P_2 \subseteq H^{j_2-1}_1 \cup H^{j_2-1}_2$, and $P_1 \subseteq H^{i_2-1}_{c''-1} \cup H^{i_2-1}_{c''}$, and $P_1$ avoids $H^{j_1-1}_1 \cup H^{i_3-1}_c$ (see \cref{fig:ErdosPosa:Paths}), it follows by the choice of $d, h \geq K$ that adding $P_1$ to the set $U^1_{m_1} \cup U^2_{m_1}$, and $P_2$ to the set $U^2_{m_2} \cup U^3_{m_2}$ and $P_3$ to the set $U^3_{m_3} \cup U^1_{m_3}$ yields a $K$-fat model of $X$ in $G'$.
\end{proof}

\begin{lemma} \label{lem:ErdosPosa:NoThreeFatXs}
    Let $X$ be the graph from \cref{constr:X}, and let $h,d,m \in \N$. Then $G'_{h,d,m}$ does not contain three $3$-fat models of $X$ that have pairwise distance at least $3$ from each other.
\end{lemma}

\begin{proof}
    Let $h,d,m \in \N$ be given, and set $G' := G'_{h,d,m}$. Let $G$ be the (unique) subgraph of $G'$ that is isomorphic to $G_{h,d,m}$. We prove the following statement:
    \labtequtag{eq:ErdosPosa:FatX}
    {\emph{Every $3$-fat model of $X$ in $G'$ either contains $\RR(G')$, or it includes a path in $G$ between $\SSS(G)$ and $\TTT(G)$.}}
    {$\triangle$}
    By \cref{lem:NoTwoPaths}, this immediately implies the lemma.

    To prove \eqref{eq:ErdosPosa:FatX}, let $(\cU, \cE)$ be a $3$-fat model of $X$ in $G'$. Without loss of generality assume that $(\cU, \cE)$ is minimal, in that there is no $3$-fat model of $X$ in $G'$ whose vertex set is a proper subset of $Y:= (\bigcup_{x \in V(X)} U_x) \cup (\bigcup_{e \in E(X)} V(E_e))$.
    Since we are done if $(\cU, \cE)$ contains $\RR(G)$ or a path in $G$ between $\SSS(G)$ and $\TTT(G)$, we may assume that $Y$ contains neither $\RR(G)$ nor any such path. 

    We first show that each of the submodels of $(\cU, \cE)$ corresponding to some $M_i$ is essentially some $L_{i_j}$. More precisely, there are $i_1, i_2, i_3 \in [m]$ with $i_1<i_2<i_3$ such that $L_{i_1}, L_{i_2}, L_{i_3} \subseteq Y$, and such that each branch set $U_{m_1}, U_{m_2}, U_{m_3}$ contains precisely two of $s_{i_1}, s_{i_2}, s_{i_3}, t_{i_1}, t_{i_2}, t_{i_3}$ (but not both $s_{i_j}$ and $t_{i_j}$ for any $j \in [3]$).
    For this, let us note that since every $L_i$ can be separated from $G$ by removing two vertices ($s_i$ and $t_i$), and because the $(154\times 154)$-grid is `almost' $3$-connected (except for its four corners), the minimality condition on $(\cU, \cE)$ implies that for every $j \in [3]$, the submodel of $(\cU, \cE)$ corresponding to $M_j$ either meets each $L_i$ at most in some path, or it is almost contained in some $L_i$, except for its branch sets corresponding to $m_j$ and $m_{j-1 \pmod{3}}$ which meet $L_i$ but need not be included in it. 
    If the former case applies to one of $M_1, M_2, M_3$, then it follows that $\tilde{G} := \tilde{G}_{h, \max\{d,3K\} m}$ contains a $3$-fat model of $M_j$, where $\tilde{G}$ is the graph obtained from $G_{h,d,m}$ by adding, for every $i \in [m]$, a path of length $d+1$ between $t_i$ and $s_{m-i+1}$. But since $M_j$ is isomorphic to the $(154 \times 154)$-grid, this contradicts \cref{lem:NoFatGrids:EP}.  

    Hence, for every $j \in [3]$ there is some $i_j \in [m]$ such that the submodel of $(\cU,\cE)$ corresponding to $M_j$ is almost contained in $L_j$, except that the branch sets corresponding to $m_j$ and $m_{j-1 \pmod{3}}$ only meet $L_{i_j}$ but need not be included in $L_{i_j}$. In fact, by the structure of $X$, it follows that each branch set $U_{m_1}, U_{m_2}, U_{m_3}$ meets $G$, and in particular contains precisely two of $s_{i_1}, s_{i_2}, s_{i_3}, t_{i_1}, t_{i_2}, t_{i_3}$ (but not both $s_{i_j}$ and $t_{i_j}$ for any $j \in [3]$).

    Since branch sets are connected, it follows that there are paths $P_1 \subseteq G'[U_{m_1}]$, $P_2 \subseteq G'[U_{m_2}]$ and $P_3 \subseteq G'[U_{m_3}]$ whose endvertices are in $\{s_{i_1}, s_{i_2}, s_{i_3}, t_{i_1}, t_{i_2}, t_{i_3}\}$, but none of $P_{1}, P_2, P_3$ has both endvertices in $\{s_{i_j}, t_{i_j}\}$ for any $j \in [n]$. Since $L_{i_1}, L_{i_2}, L_{i_3}$ are `blocked' by the submodels of $(\cU, \cE)$ corresponding to $M_1, M_2, M_3$, respectively, $P_1, P_2, P_3$ are contained in $\bar{G} := G' - (V(L_{i_1} \cup L_{i_2} \cup L_{i_3}) \setminus V(G))$. 

    We need the following auxiliary statement:

    \begin{claim} \label{claim:ErdosPosa:NoTwoTPaths}
        Suppose there are $j \in \{1,3\}$ and $Q \in \{P_1, P_2, P_3\}$ such that $Q$ contains $s_{i_j}$ but not $s_{i_2}$. 
        If a subpath $Q'$ of $Q$ has endvertices $s_{\ell}, s_{\ell'} \in \SSS(G)$ such that $\ell < i_2 < \ell'$, then $Q' \not\subseteq G$.
    \end{claim}

    We remark that by the symmetry of $G$, \cref{claim:ErdosPosa:NoTwoTPaths} is also true with $\TTT(G)$ instead of $\SSS(G)$.
    \smallskip

    \begin{claimproof}
        Let $Q'$ be any such subpath of $Q$, and suppose for a contradiction that $Q' \subseteq G$. 
        Let further $W \in \{P_1, P_2, P_3\}$ be the path containing $s_{i_2}$, and note that $W \neq Q$ by the assumption on $Q$.
        
        Let us first observe that since $W \subseteq Y$, and $Y$ does not contain a path in $G$ between $\SSS(G)$ and $\TTT(G)$, and because $t_{i_2} \notin V(W)$ (as already $s_{i_2} \in V(W)$), it follows that $W$ meets $\SSS(G)$ in at least one other vertex than $s_{i_2}$. Let $W'$ be the (unique) subpath of $W$ that starts in $s_{i_2}$, ends in $\SSS(G)\setminus \{s_{i_2}\}$, and is internally disjoint from $\SSS(G)$. In particular, $W' \subseteq G$. Let $s_{k}$ be the other endvertex of $W'$ in $\SSS(G)$, and let $k^{\min} := \min\{i_2, k\}$ and $k^{\max} := \max\{i_2, k\}$.
        
        Now since $\ell < i_2 < \ell'$, it follows that $\ell, k^{\min} < \ell', k^{\max}$. Since $Q', W'$ are both paths in $G \cong G_{h,d,m}$ with one endvertex in $\{s_{k^{\min}}, s_{\ell}\}$ and the other in $\{s_{k^{\max}}, s_{\ell'}\}$, it follows by \cref{cor:NoTwoPathsBetweenSameAdhesion} that either $d_{G}(Q', W') \leq 2$, or one of $Q', W'$ contains $\RR(G) = \RR(G')$. Since $(\cU, \cE)$ is $3$-fat, the former cannot hold, and since $(\cU, \cE)$ does not contain $\RR(G)$ by assumption, neither the latter can hold (as $Q', W'$ are subpaths of distinct paths of $P_1, P_2, P_3$, and hence are contained in (distinct) branch sets of $(\cU,\cE)$), a contradiction.
    \end{claimproof}

    By the symmetry of $X$, we may assume without loss of generality that $s_{i_2} \in V(P_2)$. 
    Then at most one of $s_{i_1}, s_{i_3}$ is contained $P_2$, and hence at least one of them is contained in $P_1 \cup P_3$; we may assume that $s_{i_1} \in V(P_1)$, the other case is analogous.
    We can now use \cref{claim:ErdosPosa:NoTwoTPaths} to determine the other endvertex of $P_1$.

    \begin{claim} \label{claim:ErdosPosa:EndvertexOfP3}
        The other endvertex of $P_1$ is $t_{i_2}$.
    \end{claim}

    \begin{claimproof}
        Let $P'_1$ be a maximal subpath of $P_1$ which starts in $s_{i_1}$, is contained in $G$, and ends in $\SSS(G) \cup \TTT(G)$. Since $(\cU, \cE)$, and hence $P_1$, does not contain a path in $G$ between $\SSS(G)$ and $\TTT(G)$, the subpath $P'_1$ of $P_1$ ends in a vertex $s_j \in \SSS(G)$.
        Since $i_1 < i_2$, it follows by \cref{claim:ErdosPosa:NoTwoTPaths} that $j < i_2$.
        In particular, as $i_2 < i_3$, we have $j \neq i_3$. Hence, $P'_1 \subsetneq P_1$. As $P'_1$ was chosen maximal, it follows that $P_1$ goes through $L_{j}$ after it reached $s_j$, and hence (as $L_{j}$ is separated from $G$ by $s_j, t_j)$, the path $P_1$ eventually meets $t_j$.

        \begin{figure}[ht]
            \centering
            \begin{tikzpicture}[scale=0.49,auto=left]

\definecolor{dgreen}{rgb}{0,0.7,0}
\definecolor{dgrey}{rgb}{0.5,0.5,0.5}
\definecolor{lgrey}{rgb}{0.6,0.6,0.6}
\definecolor{llgrey}{rgb}{0.7,0.7,0.7}

\tikzstyle{every node}=[gray, inner sep=1.5pt, fill=gray,circle,draw]

\node[ForestGreen,ultra thick] (v2) at (2,0) {};
\node[magenta,ultra thick] (u2) at (2,3) {};
\node[blue,ultra thick] (y2) at (2.6,5.5) {};
\node[blue,ultra thick] (z2) at (3.2,7.5) {};

\node[blue,ultra thick] (v3) at (26,0) {};
\node[blue, ultra thick] (w3) at (26,2) {};
\node[magenta, ultra thick] (x3) at (25.8,4.5) {};
\node[ForestGreen, ultra thick] (z3) at (24.8,7.5) {};

\draw[blue,very thick] (z2) to [bend left=50] (y2);
\draw[blue,very thick,->] (y2) to [bend left=10] (1,5.9);
\draw[blue,very thick,->] (w3) to [bend left=10] (24.8,2.7);
\draw[blue,very thick] (27.7,1.9) to [bend left=10] (w3);

\draw[very thick] (3,8) to [bend right=20] (1.5,-0.5);
\draw [very thick] plot [smooth, tension=0.4] coordinates {(3,8) (14,11) (25,8)};
\draw[very thick] (25,8) to [bend left=20] (26.5,-0.5);
\draw[very thick] (1.5,-0.5) -- (26.5,-0.5);

\draw[magenta,very thick,->] (2,3) to [bend right=8] (4,2.6);

\draw[dgrey,very thick,dash pattern={on 10pt off 4pt}] (0.5,4) to [bend right=10] (10,3);
\draw[dgrey,very thick,dash pattern={on 10pt off 4pt}] (27.5,5.5) to [bend left=10] (18,4.5);

\draw[orange,very thick,dashed,->] (4,6) to [bend left=40] (3.2,1);
\draw[orange,very thick,dashed,->] (24.3,2.5) to [bend left=8] (4,1);

\draw[orange,very thick,dashed,->] (4.2,6.3) to [bend left=8] (24,7);
\draw[orange,very thick,dashed,->] (24.5,2.9) to [bend left=40] (24.1,6.3);

\tikzstyle{every node}=[]

\draw[decorate, decoration={brace, amplitude=1ex, raise=1ex}]
  (0.5, -0.5) -- (0.5, 4) node [pos=.5, left=2.5ex] {$S'$};
\draw[decorate, decoration={brace, amplitude=1ex, raise=1ex}]
  (0.5, 4.2) -- (0.5, 8) node [pos=.5, left=2.5ex] {$S$};

\draw[decorate, decoration={brace, amplitude=1ex, raise=1ex}]
  (27.5, 5.5) -- (27.5, -0.5) node [pos=.5, right=2.5ex] {$T$};
\draw[decorate, decoration={brace, amplitude=1ex, raise=1ex}]
  (27.5, 8) -- (27.5, 5.7) node [pos=.5, right=2.5ex] {$T'$};

\tikzstyle{every node}=[]
\draw[blue,right] (z2) node [] {$s_{i_1}$};
\draw[blue,below right] (y2) node [] {$s_{j}$};
\draw[magenta,below right] (1.9,2.8) node []  {$s_{i_2}$};
\draw[ForestGreen,above right] (2,0.1) node [] {$s_{i_3}$};

\draw[ForestGreen,below] (z3) node [] {$t_{i_3}$};
\draw[magenta,below] (26,4.3) node []  {$t_{i_2}$};
\draw[blue,above left] (26,-0.1) node [] {$t_{i_1}$};
\draw[blue,below left] (26.1,2) node [] {$t_{j}$};

\draw[blue] (1.5,6.6) node [] {$P_1$};

\draw[orange] (4.6,4) node [] {\Huge \ding{55}};
\draw[orange] (14,0.8) node [] {\Huge \ding{55}};
\draw[orange] (23.8,3.7) node [] {\Huge \ding{55}};
\draw[orange] (14,7.4) node [] {\Huge \ding{55}};

\draw (22,10) node [] {\large $G$};

\end{tikzpicture}
            \caption{The blue path $P_1$ starting from $s_{i_1}$ can `jump' back and forth between $S := \{s_1, \dots, s_{i_2-1}\} \subseteq \SSS(G)$ and $T:= \{t_{i_2}, \dots, t_m\} \subseteq \TTT(G)$, but it cannot contain any vertices of $S' := \SSS(G) \setminus S$ and $T' := \TTT(G) \setminus T$: 
            any subpath of $P_1$ from $S$ to $S'$ that is contained in $G$ contradicts \cref{claim:ErdosPosa:NoTwoTPaths}, and
            any subpath of $P_1$ from $\TTT(G) \supseteq T$ to $S'$ that is contained in $G$ contradicts the assumption on $P_1$. (Similar for $T'$). Hence, $s_{i_3}, t_{i_3} \notin V(P_1)$, and thus $P_1$ has to end in $t_{i_2}$.}
            \label{fig:ErdosPosa:PathP1}
        \end{figure}
        Now if $P_1$ does not contain $t_{i_2}$, then, since $j < i_2$, \cref{claim:ErdosPosa:NoTwoTPaths} (applied to $\TTT(G)$ instead of $\SSS(G)$) implies that $P_1$ does not contain a subpath $P'_1$ with endvertices $t_{k}, t_{k'} \in \TTT(G)$ such that $P'_1 \subseteq G$, and such that $k < i_2 < k'$.
        But since $i_3 > i_2$, and as $P_1$ does not contain a subpath in $G$ between $\SSS(G)$ and $\TTT(G)$, it follows that $P_1$ contains neither $s_{i_3}$ nor $t_{i_3}$ (cf.\ \cref{fig:ErdosPosa:PathP1}). But since it neither contains $s_{i_2}, t_{i_2}$ by assumption, nor $t_{i_1}$ (as it contains $s_{i_1}$), this is a contradiction. 
    \end{claimproof}

    \begin{claim}
        The other endvertex of $P_2$ is $t_{i_1}$.
    \end{claim}

    \begin{claimproof}
        Let $j \in \{2,3\}$ such that $t_{i_1} \in V(P_j)$. By the symmetry of $G$ and since $t_{i_2} \in V(P_1)$, and hence $t_{i_2} \notin V(P_j)$, the same argument as in \cref{claim:ErdosPosa:EndvertexOfP3} shows that $s_{i_2} \in V(P_j)$. As $s_{i_2} \in V(P_2)$ by assumption, this implies that $P_2$ ends in $t_{i_1}$.
    \end{claimproof}

    Hence, $P_1$ has endvertices $s_{i_1}, t_{i_2}$ and $P_2$ has endvertices $s_{i_2}$ and $t_{i_1}$. But then $P_3$ must have endvertices $s_{i_3}, t_{i_3}$, a contradiction because $P_3 \subseteq U_{m_3}$ but $s_{i_3}, t_{i_3}$ are opposite corners of $L_{i_3}$, and are hence contained in distinct branch sets of $(\cU,\cE)$.
\end{proof}

We are now ready to prove \cref{main:ErdosPosaPlanar:copy}:

\begin{proof}[Proof of \cref{main:ErdosPosaPlanar:copy}]
    Let $X$ be the graph from \cref{constr:X}. Given $K,D,n \in \N$, we let $G := G_{h,d,m}$ as provided by \cref{lem:ErdosPosa:NoSmallHittingSet}. Then \ref{itm:ErdosPosa:NoSmallHittingSet} follows from \cref{lem:ErdosPosa:NoSmallHittingSet} and \ref{itm:ErdosPosa:NoThreeFatXs} follows from \cref{lem:ErdosPosa:NoThreeFatXs}.
\end{proof}

\section{Concluding remarks}\label{sec:con}

One motivation for the (weak) fat minor conjecture was that it would of allowed for the graph minor structure theorem of Robertson and Seymour \cite{robertson2003graph} to instantly be lifted via quasi-isometry to the coarse setting where some fat minor is forbidden instead of just a minor.
Unfortunately, \Cref{main:Counterexample} shows that this is not possible. Of course, this does not rule out the possibility of some kind of coarse structure theorem for graphs forbidding a fat minor being possible, it just needs to be more complicated than was hoped.
Possibly the construction in \Cref{main:Counterexample} can provide some clue as to what this more complicated structure could be.
In this section we discuss some weakening of \Cref{conj:CoarseGrid} that might still hold.

\Cref{conj:CoarseGrid} might still hold for 2-fat minors (which in this case is essentially the same as induced minors).

\begin{conjecture}
    Let $n \in \mathbb{N}$.
    Then, there exist some $M,A,g\in \mathbb{N}$ such that every graph with no $(n\times n)$-grid induced minor is $(M,A)$-quasi-isometric to a graph of tree-width at most $g$.
\end{conjecture}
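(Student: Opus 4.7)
My plan starts by testing whether the paper's own counterexample $G_{h,d,m}$ contains a large $(n\times n)$-grid as an induced minor. Since $G_{h,d,m}$ is already not quasi-isometric to any graph of bounded tree-width (by \cref{lem:KnMinors}, using that $K_n$-minor-free graphs have tree-width less than $n$), showing that $G_{h,d,m}$ avoids the $(n\times n)$-grid as an induced minor for $n$ sufficiently large would refute the conjecture immediately. The authors' framing in \cref{sec:con} suggests they believe $G_{h,d,m}$ \emph{does} contain large grid induced minors, in which case the conjecture is still alive; but this should be the first thing checked by hand, carefully analysing how $n^2$ branch sets could be laid out across the binary tree $B(G)$ and the nested copies $H^j_i$ while respecting the rigid adjacency pattern of the grid.

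Assuming $G_{h,d,m}$ is not itself a counterexample, I would attempt to prove the conjecture using the characterisation of graphs quasi-isometric to bounded tree-width due to Nguyen, Scott, and Seymour and independently to Hickingbotham (cited in the introduction): such graphs are exactly those admitting a tree-decomposition whose bags are each contained in the union of boundedly many balls of bounded radius. Given $G$ with no $(n\times n)$-grid induced minor, the plan is an iterative coarse decomposition: at each step locate a small collection of bounded-radius balls that coarsely separate $G$ into strictly smaller pieces, and recurse. If the recursion terminates one produces the desired tree-decomposition; if it fails, the obstruction should be distilled into a large grid induced minor, contradicting the hypothesis.

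The hard part will be the coarse separator step. The classical grid-minor theorem provides large grid \emph{minors} inside high-tree-width graphs, but induced-minor models are rigid --- they forbid any extra edge between non-adjacent branch sets --- and in the coarse setting one has only a distance-based hypothesis to work with. I would try to combine known induced-minor variants of the grid theorem in the classical setting with the coarse separation techniques used by Nguyen, Scott, and Seymour in their proof of the coarse path-width theorem, and with the ideas of \cref{lem:NoSmallSeparator} from this paper. Bridging the gap between ``a coarsely grid-like structure exists'' and ``a genuine induced-minor model of the grid exists'' appears to be the crux, and is presumably the reason the conjecture remains open.
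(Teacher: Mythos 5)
There is nothing to compare here: the statement you were asked about is posed in \cref{sec:con} as an \emph{open conjecture} --- the paper offers no proof of it, and neither do you. Your submission is a research plan, not a proof. Its decisive step, the ``coarse separator step'' in which a failure of the recursion is to be converted into a genuine induced-minor model of the grid, is exactly the content of the conjecture and is left entirely unaddressed; you concede this yourself in your closing sentence. A proposal whose crux is ``bridging the gap \ldots appears to be the crux'' does not establish the statement.

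Your preliminary test in the first paragraph also cannot settle matters with the tools available in the paper. \cref{lem:NoFatGrids} only excludes the $(154\times 154)$-grid as a \emph{$3$-fat} minor of $G_{h,d,m}$; forbidding a $3$-fat minor is strictly weaker than forbidding a $2$-fat (equivalently, induced) minor, so the lemma says nothing about whether $G_{h,d,m}$ contains large grids as induced minors. Indeed, the reason the authors work with $3$-fat minors (a graph forbidding a $K$-fat minor is quasi-isometric to one forbidding the same graph as a $3$-fat minor) is precisely why the induced-minor variant survives their counterexample, and this conjecture is stated because that case remains open. So neither branch of your plan --- refutation via the construction, nor proof via a coarse decomposition scheme --- is carried out, and the statement remains unproven in your write-up, as it is in the paper.
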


A positive result in this direction is a theorem of Korhonen \cite{korhonen2023grid} that graphs of bounded degree forbidding a grid as an induced minor have bounded tree-width.
Possibly this could be strengthened to fat minors.

\begin{conjecture}
    Let $K,d,n \in \mathbb{N}$.
    Then, there exists some $g\in \mathbb{N}$ such that every graph with maximum degree at most $d$ and with no $K$-fat $(n\times n)$-grid minor has tree-width at most $g$.
\end{conjecture}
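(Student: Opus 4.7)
The plan is to derive this conjecture from Korhonen's theorem on induced grid minors~\cite{korhonen2023grid} via a combinatorial extraction argument. Korhonen guarantees that for every $d, N \in \N$, there exists $g' = g'(d, N)$ such that any graph of maximum degree at most $d$ and tree-width exceeding $g'$ contains the $(N \times N)$-grid as an induced minor. It therefore suffices to establish the following combinatorial statement: for every $d, K, n \in \N$, there exists $N = N(d, K, n)$ such that every induced $(N \times N)$-grid minor in a graph of maximum degree at most $d$ yields a $K$-fat $(n \times n)$-grid minor. The conjecture then follows by the contrapositive applied to Korhonen's theorem.

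To extract the $K$-fat sub-grid, I would first normalise the induced minor model $(\cU, \cE)$ so that each branch set $U_{i,j}$ is a minimum Steiner tree with terminals at its (at most four) attachment vertices to the neighbouring branch paths. I would then seek a sub-grid at spacing $c = c(d,K)$ with $cn \le N$: the candidate branch sets are the original $U_{ci, cj}$ for $(i,j) \in [n]^2$, and the candidate branch path between $(ci, cj)$ and $(ci, c(j+1))$ is obtained by concatenating the $c$ intervening original branch paths and the $c-1$ intervening original branch sets $U_{ci, cj+1}, \ldots, U_{ci, cj+c-1}$. For this sub-model to be $K$-fat, we need distance at least $K$ in $G$ between any two non-adjacent candidate branch sets and between any two non-incident candidate branch paths/sets.

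The bounded-degree hypothesis enters via the volume bound $|B_G(v, K)| \le d^{K+1}$, which limits how many branch sets or paths can lie within distance $K$ of any given vertex. Averaging over the $c^2$ shifts of the sub-grid pattern in $[0,c)^2$ and counting the expected number of ``bad'' (i.e.\ $K$-close but formally non-adjacent) pairs using the volume bound, one hopes to show that for $c$ large in terms of $d, K$ and $N$ large in terms of $c, n$, there is a shift with zero bad pairs --- yielding the desired $K$-fat $(n \times n)$-grid minor.

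The main obstacle is that, unlike in the classical induced-minor setting, we have no a priori metric control relating grid coordinates in $(\cU, \cE)$ to graph distances in $G$: individual branch sets may have unbounded size, and a single branch set may be geometrically close in $G$ to many others whose grid-coordinates lie far away. A naive volume argument fails because the ``weight'' each branch set contributes to the close-pair count depends on its own size, which the normalisation step does not bound. Overcoming this will likely require a more refined discharging argument leveraging the total budget $\sum_{i,j} |U_{i,j}| \le |V(G)|$ together with isoperimetric properties of the grid --- in particular, that any large-diameter branch set must ``steal mass'' from only a limited portion of the grid. This metric-versus-combinatorial mismatch is the crux of the difficulty and may well require genuinely new ideas beyond a direct pigeonhole or shifting argument.
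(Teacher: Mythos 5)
This statement is one of the paper's open conjectures in the concluding remarks (\cref{sec:con}); the paper offers no proof of it, only the observation that Korhonen's theorem \cite{korhonen2023grid} is evidence in its favour. Your proposal does not prove it either. Everything rests on the unproven ``combinatorial statement'' that, in a graph of maximum degree at most $d$, every induced $(N\times N)$-grid minor with $N$ huge yields a $K$-fat $(n\times n)$-grid minor. But since tree-width is minor-monotone, an induced $(N\times N)$-grid minor already forces tree-width at least $N$; so, modulo Korhonen's theorem, your extraction claim is essentially \emph{equivalent} to the conjecture itself. The proposed route therefore reformulates the problem in the language of induced minors rather than reducing it to anything known, and the conjecture remains exactly as open after your argument as before it.

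The concrete failure point is the averaging step, and you identify it yourself. Bounded degree gives the per-vertex volume bound $|B_G(v,K)|\le d^{K+1}$, but the objects whose pairwise $K$-separation you must certify are the candidate branch sets and the concatenated branch paths of the sub-model, and these have unbounded size and diameter: the Steiner-tree normalisation minimises with respect to at most four terminals but places no bound on how long such a tree is, and a single large branch set can lie within distance $K$ of an unbounded number of grid-distant pieces. Consequently the expected number of ``bad pairs'' over the $c^2$ shifts is not controlled by any function of $d$, $K$, $c$ alone, and no choice of $c$ and $N$ in your scheme forces a shift with zero bad pairs. Your closing admission that overcoming this ``may well require genuinely new ideas'' is accurate, but it means the proposal is a plan with a gap at its crux, not a proof.
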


Note that for graphs with bounded degree, having bounded tree-width is (quantitively) equivalent to being quasi-isometric to a graph of bounded tree-width due to results on coarse tree-width \cite{hickingbotham2025graphs,nguyen2025coarse}.

The counterexample \cite{CounterexAgelosPanosConjecture} to the fat minor conjecture \cite{GP23} has bounded degree.
Possibly the bounded degree version of the weak fat minor conjecture \cite{CounterexAgelosPanosConjecture} could still hold.

\begin{conjecture}
    Let $K,d \in \mathbb{N}$, and let $H$ be a graph.
    Then, there exist some $M,A\in \mathbb{N}$ and a graph $H'$ such that every graph with maximum degree at most $d$ and with no $K$-fat $H$ minor is $(M,A)$-quasi-isometric to a graph with no $H'$ minor. 
\end{conjecture}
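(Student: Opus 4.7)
Given $M,A,n\in\N$ with $M\geq 1$, the plan is to choose parameters $q,d,h,m$ sufficiently large (with $m=n^2$, $q=\lceil\log_2(N+1)\rceil$ for $N=n(n-1)/2$, and $d,h$ chosen as large polynomials in $M,A,r(M,A)$ so that \cref{lem:NoSmallSeparator} applies), then suppose for contradiction that $f\colon V(G)\to V(H)$ is an $(M,A)$-quasi-isometry from $G:=G_{h,d,m}$ to a graph $H$, and construct an honest $K_n$ minor in $H$. The branch sets of the $K_n$ minor will come from $n$ disjoint paths of $H$ between $\SSS(H):=f(\SSS(G))$ and $\TTT(H):=f(\TTT(G))$, and the branch paths will come from $f$-images of spines of $G$ inflated by a bounded-radius ball.

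The first key step is to produce $m$ pairwise disjoint $\SSS(H)$--$\TTT(H)$ paths $P_1,\ldots,P_m$ in $H-B_H(\RR(H),Mq+A)$, with the extra property that each ball $B_H(f(u),r)$ for $u$ in the distinguished vertex set $U:=\bigcup_{i\in[N]}S_i$ meets at most one $P_i$. Here $S_i=\SSS(\Delta(x_i))\cup\{\ell_i\}$, where $x_1,\dots,x_N$ are appropriately chosen nodes at the $q$-th level of $B(G)$ and $\ell_i$ is a suitable leaf of $B(G)$. The extra property is achieved by a contraction trick: contract $B_H(f(u),r)$ to a vertex $h_u$ for each $u\in U$ to form $H'$, note that $H$ and $H'$ are quasi-isometric so $f'=g\circ f$ is an $(M',A')$-quasi-isometry from $G$ to $H'$, then apply Menger in $H'$. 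If Menger fails, a small set $Y$ of size ${<}m$ would separate $\SSS(H')$ from $\TTT(H')$ in $H'-B_{H'}(\RR(H'),Mq+A)$; pulling $Y$ back via $f'$ produces ${<}m$ balls in $G$ separating $\SSS(G)$ from $\TTT(G)$ in $G-B_G(\RR(G),\cdot)$ (using \cref{lem:QIPreservesConn} to route an $\SSS(G)$--$\TTT(G)$ path provided by \cref{lem:NoSmallSeparator} back through $f'$), contradicting \cref{lem:NoSmallSeparator}.

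The second key step is to exploit the separators $S_i$. By \cref{obs:SubtrianglesSeparateST} and \cref{lem:QIPreservesSeps}, the set $S'_j:=B_H(f(S_j),r)$ separates $\SSS(H)$ from $\TTT(H)$ in $H-B_H(\RR(H),Mq+A)$, so each $P_i$ meets every $S'_j$. For each $j\in[N]$ at most two of the $P_i$ can come $r$-close to $f(\{\ell_j,\ell'_j\})$ (the two leaves in $S_j$), so discarding these $\le 2N$ offending paths leaves $m-2N=n$ paths $P_1,\ldots,P_n$ each meeting $B_H(f(\SSS(\Delta_j)),r)$ but avoiding $B_H(\{f(\ell_j),f(\ell'_j)\},r)$ for every $j$. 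Fixing an enumeration $e_1,\ldots,e_N$ of $E(K_n)$, for each $e_j=ik$ one picks $a_i,a_k\in\SSS(\Delta_j)$ such that $P_i\cap B_H(f(a_i),r)\ne\emptyset$ and $P_k\cap B_H(f(a_k),r)\ne\emptyset$, and uses the two spines $Q_{ji},Q_{jk}$ of $G$ from $\ell_j$ to $a_i$ and $a_k$ respectively; the set $X_j:=B_H(f(Q_{ji}\cup Q_{jk}),r)$ is connected by \cref{lem:QIPreservesConn}, contains points of $P_i$ and $P_k$, and is separated from every other $P_a$ by the ball $B_H(\{f(\ell_j),f(a_i),f(a_k)\},r)$ by \cref{lem:QIPreservesSeps}. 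Routing a branch path $E_{e_j}$ from $P_i$ to $P_k$ through $X_j$ therefore stays disjoint from the other branch sets, and the $X_j$ are mutually disjoint because the $S_j$ are far apart in $G$. This yields the $K_n$ model.

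The main obstacle is the first step: getting $m$ disjoint $\SSS(H)$--$\TTT(H)$ paths that moreover intersect the relevant image-balls only once. Without the single-intersection property one cannot later pin down distinct $a_i,a_k\in\SSS(\Delta_j)$ on the two paths $P_i,P_k$ assigned to edge $e_j$, and the $K_n$ minor falls apart. The contraction-then-Menger argument, combined with a careful choice of parameters $d,h\gg Mr$ so that \cref{lem:NoSmallSeparator} applies with enough slack, is what makes this delicate step go through.
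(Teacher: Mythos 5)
Your proposal does not address the statement you were asked to prove. The statement is the bounded-degree weak fat minor conjecture from the concluding remarks: for all $K,d$ and every graph $H$, \emph{every} graph of maximum degree at most $d$ with no $K$-fat $H$ minor should be $(M,A)$-quasi-isometric to a graph with no $H'$ minor, for some $M,A,H'$ depending only on $K,d,H$. This is posed as an open conjecture; the paper contains no proof of it, and proving it would require a structural argument that works for an arbitrary bounded-degree graph excluding a fat minor. What you have written instead is a proof of \cref{lem:KnMinors}, i.e.\ that the specific graphs $G_{h,d,m}$ are \emph{not} $(M,A)$-quasi-isometric to any $K_n$-minor-free graph. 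That lemma points in the opposite direction: it is part of the machinery for \emph{disproving} the (unrestricted) weak fat minor conjecture and \cref{conj:CoarseGrid}, and it says nothing about graphs of bounded maximum degree excluding a fat minor --- note in particular that $G_{h,d,m}$ has unbounded degree (the root is identified with the roots of all copies $H_i^{m-1}$ at every level of the recursion, and the leaves of $B(G)$ carry $2m-2$ spines), so the lemma does not even bear on the bounded-degree conjecture as a potential counterexample.

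Concretely, the gap is that nothing in your argument produces, for an arbitrary bounded-degree graph $G$ with no $K$-fat $H$ minor, a target graph excluding some fixed minor $H'$ together with a quasi-isometry with uniform constants; your construction of a $K_n$ model in the image graph $H$ is a non-existence argument for a quasi-isometry, not an existence argument. As a reproduction of the paper's proof of \cref{lem:KnMinors} your sketch follows the paper's route closely (the contraction-plus-Menger step via \cref{lem:NoSmallSeparator}, the separators $S_i$ and \cref{lem:QIPreservesSeps}, discarding the at most $2N$ paths near the leaves, and routing branch paths through images of spines), but that is a different theorem, and the conjecture in question remains unproved by your proposal.
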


Although \cref{conj:CoarseGrid} is false, there can still be witnesses other than large grid fat minors to a graph not having a quasi-isometry to a graph of bounded tree-width.
Many proofs of the grid theorem \cite{robertson1986graph} proceed by first showing that a graph having large tree-width is witnessed instead by some structure with suitably high connectivity properties. Grids can then be found as the canonical witness for large tree-width within such structures with Ramsey-type arguments that make use of Menger's theorem.
Since the (weak) coarse Menger conjecture is false \cite{nguyen2025counterexample,NewCounterexToCoarseMenger}, the Menger's step in the strategy certainly breaks down in the coarse setting.
However, the first half of showing that large tree-width is witnessed by some structure with suitably high connectivity properties is interesting in its own right, and perhaps there could still be coarse analogues of this.
We propose one such coarse analogue.

\begin{conjecture}\label{conj:weakgrid}
    Let $K,n,m,r \in \mathbb{N}$.
    Then, there exist some $M,A,g\in \mathbb{N}$ such that every graph with no $(M,A)$-quasi-isometry to a graph of tree-width at most $g$ contains connected sets $U_1,\ldots,U_n$ that are pairwise at least $K$ apart and such that for every pair $1 \le i<j\le n$, there exist no $m$ balls of radius at most~$r$ hitting all paths between $U_i$ and $U_j$.
\end{conjecture}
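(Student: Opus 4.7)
Since the final statement is a conjecture rather than a theorem, the plan is to sketch a research strategy for attacking it and flag where the argument is most likely to break. The conjecture is asking for a coarse analogue of the classical Seymour--Thomas duality between tree-width and brambles (or tangles): large ``coarse tree-width'' should be witnessed by $n$ pairwise $K$-apart connected sets that are pairwise coarsely highly connected (no $m$ balls of radius $r$ separate any two of them). The coarse Menger condition between each pair is doing the job that ``pairwise touching'' does in the classical bramble definition.

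The natural bridge is the characterization of coarse bounded tree-width due to Nguyen--Scott--Seymour \cite{nguyen2025coarse} and Hickingbotham \cite{hickingbotham2025graphs}: $G$ is $(M,A)$-quasi-isometric to a graph of tree-width at most $g$ if and only if $G$ admits a tree-decomposition whose bags are each covered by a bounded number of balls of bounded radius. So the contrapositive of the conjecture becomes: if $G$ admits no such coarse tree-decomposition, then $G$ contains the asserted family $U_1,\dots,U_n$. I would introduce a notion of \emph{coarse $(m,r)$-separation} (a ball cover $S = B_G(X,r)$ with $|X|\leq m$ whose removal splits $V(G)\setminus S$ into two sides with no edges between them) and a \emph{coarse tangle of order $n$}: a consistent orientation of every coarse $(m,r)$-separation toward a ``big side''. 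Classical tangle theory would then suggest a duality: either $G$ admits a tree-decomposition whose adhesion sets are coarse $(m,r)$-separators, or $G$ carries a coarse tangle of large order.

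Given a coarse tangle of sufficiently large order, I would extract the family $U_1,\dots,U_n$ as follows. Each $U_i$ is a ``thick'' connected subgraph living on the tangle's big side of every coarse $(m,r)$-separation. A pigeonhole/Ramsey-type pruning then selects $n$ of these that are pairwise at least $K$ apart, using the slack built into the tangle's order. For any pair $(U_i,U_j)$, if $m$ balls of radius $r$ hit every $U_i$--$U_j$ path, then the resulting coarse $(m,r)$-separation would have both $U_i$ and $U_j$ lying on its big side, violating tangle consistency. The dependence of $M,A,g$ on $K,n,m,r$ would then be tracked back through the duality step and the coarse tree-width characterization.

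The main obstacle is establishing the coarse duality itself. Classically, the proof that the nonexistence of a tangle of order $k$ forces tree-width $<k$ proceeds by iteratively composing ``tight'' separations into a canonical tree-decomposition, crucially using that $X\cup Y$ separates whatever $X$ and $Y$ jointly do. Coarsely, ball-separators do not compose cleanly: a ball of the same radius around $X\cup Y$ need not separate, and enlarging the radius inflates the parameters after each step. Controlling this slack through an iterative construction---perhaps by adapting the submodular techniques used in the proof of the coarse path-width theorem by Nguyen--Scott--Seymour \cite{nguyenasymptotic3}---is where I expect the hardest work to lie. A secondary difficulty is enforcing the pairwise $K$-apart condition on the $U_i$: unlike a classical bramble, which tolerates touching sets, here one must find room within each big side for a substantially larger pool of candidates before Ramsey-style pruning. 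Concretely, this likely forces the tangle order $n'$ needed in the duality to grow like $n \cdot f(K)$ for some function depending on local growth of $G$, which must then be absorbed back into the final constants $M,A,g$.
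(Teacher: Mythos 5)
This statement is \cref{conj:weakgrid} of the paper: the authors pose it as an open weakening of \cref{conj:CoarseGrid} and prove nothing about it beyond the easy (qualitative) converse sketched in a footnote. So there is no proof in the paper for your attempt to be measured against, and your submission must stand on its own as a proof --- which it does not. What you have written is a research program: you yourself label the central step (a coarse analogue of the tangle/tree-width duality) as the place ``where I expect the hardest work to lie'' and offer no argument for it.

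Concretely, the gap is that the asserted duality --- either a tree-decomposition whose adhesion sets are coarse $(m,r)$-separators, or a coarse tangle of large order --- is exactly the unresolved content of the conjecture once one routes through the Nguyen--Scott--Seymour/Hickingbotham characterization of coarse tree-width; asserting it does not discharge it. The classical Seymour--Thomas proof relies on the fact that separators compose ($X\cup Y$ separates whatever $X$ and $Y$ jointly separate), and as you note this fails coarsely: each composition inflates the radius and the number of balls, and no mechanism is given to control this inflation over an unbounded iteration. Moreover, the present paper shows that Menger-type tools, which underpin the classical extraction of highly connected objects from tangles, are unavailable in the coarse setting (the weak coarse Menger conjecture is false), so the extraction step is also not routine: passing from ``$m$ balls of radius $r$ hit every $U_i$--$U_j$ path'' to a coarse separation that violates tangle consistency needs care, since those balls may meet $U_i$ or $U_j$ themselves and a tangle orientation speaks of separations rather than path-hitting families. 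In short, the proposal is a plausible sketch of a strategy, but the statement remains exactly as open after your argument as it is in the paper.
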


\noindent Note that if a graph contains a large grid as a $D$-fat minor for large enough $D$, then it contains such connected sets $U_1, \ldots U_n$, and therefore \cref{conj:weakgrid} is a weakening of \cref{conj:CoarseGrid}.
Moreover, note that the (qualitative) converse of \cref{conj:weakgrid} is straightforward to check.\footnote{If $K$ is large compared to $M,A$, then for every graph $G$ with such sets $U_i$ and every $(M,A)$-quasi-isometry $f$ from $G$ to some graph $H$, we can find connected, disjoint sets $W_i$ in $V(H)$, each containing $f(U_i)$, by \cref{lem:QIPreservesConn}. If also $r$ is large compared to $M,A$, then applying Menger's theorem in $H$ yields between any two sets $W_i,W_j$ at least $m$ disjoint paths $P^{ij}_k$. Now the sets $W_i, V(P^{ij}_k)$ can easily be turned into a bramble~$\cB$ in $H$ of order at least $\ell := \min\{m,n\}$, witnessing that $H$ has tree-width at least $\ell-1$ \cite{ST1993GraphSearching} (for every $(i_0, \dots, i_{n}) \in [n] \times [m]^n$, put the set $W_{i_0} \cup \bigcup_{j \in [n]} V(P^{i_0j}_{i_j})$ in $\cB$).}
\medskip

If \cref{conj:weakgrid} is true, then it might be possible that \cref{conj:weakgrid} could be further refined to more precisely give the structural obstructions to being quasi-isometric to a graph of bounded tree-width.
Of course, in light of the counterexample in this paper, such a list of obstructions is likely very complicated.
Towards this it would certainly be helpful to know exactly what obstructions causes the (weak) coarse Menger conjecture \cite{albrechtsen2024menger,GP23,nguyen2025counterexample} to be false.
Nguyen, Scott, and Seymour have very recently shown that the coarse Menger conjecture \cite{albrechtsen2024menger,GP23} is true for graphs of bounded path-width \cite{AS5} and for `paths across a disc' \cite{AS6}.

The coarse Menger conjecture \cite{albrechtsen2024menger,GP23} is false for graphs of bounded tree-width since Nguyen, Scott, and Seymour's \cite{nguyen2025counterexample} construction has bounded tree-width \cite{CounterexAgelosPanosConjecture}.
However, using the Erd\H{o}s-P\'{o}sa property for subtrees \cite{gyarfas1970helly}, it is also possible to prove the weak coarse Menger conjecture~\cite{nguyen2025counterexample} for graphs of bounded tree-width.
Since the property that class of graphs satisfies the (weak) coarse Menger conjecture is preserved by quasi-isometry, it is possible to show that the construction in the present paper is a counterexample to \cref{conj:CoarseGrid} using this fact (as it is also a counterexample to the weak coarse Menger conjecture by \cref{lem:NoTwoPaths,lem:NoSmallSeparator}).
This avoids needing to prove \cref{lem:KnMinors} to disprove \cref{conj:CoarseGrid}, but of course this lemma is still needed to prove \cref{main:Counterexample} in full and show that the construction is furthermore a counterexample to the weak fat minor conjecture \cite{CounterexAgelosPanosConjecture}.
\medskip

Although \cref{main:ErdosPosaPlanar} shows that not all planar graphs have the coarse Erd\H{o}s-P\'{o}sa property, curiously it still leaves open the case of having two copies.

\begin{conjecture}
    For every planar graph $H$ there exists a constant $c$ such that he following holds for every graph $G$.
    If $G$ does not contain two $K$-fat minor-models of $H$ that are at distance at least $K$ from each other, then there is a set $Z$ of at most $c$ vertices of $G$ such that the ball~$B_{G}(Z,cK)$ of radius $cK$ around $Z$ in $G$ meets all $K$-fat $H$ minors models in $G$.
\end{conjecture}

There are other related variants of the Erd\H{o}s-P\'{o}sa property.
For instance, Ahn, Gollin, Huynh, and Kwon~\cite{AGHKInducedErdosPosa} proved an induced Erd\H{o}s-P\'{o}sa theorem and Dujmović, Joret, Micek, and Morin~\cite{DJMMErdosPosalong} showed the Erd\H{o}s-P\'{o}sa property for far apart cycles.
While planar graphs do not have the coarse Erd\H{o}s-P\'{o}sa property, we conjecture a strengthening of Dujmović, Joret, Micek, and Morin's theorem for planar graphs as follows.

\begin{conjecture}
    For every planar graph $H$ there is a function $f : \mathbb{N} \to \mathbb{N}$ and a constant $c$ such that the following holds for every graph $G$.
    If $G$ does not contain $n$ disjoint $H$ minor-models that have pairwise distance at least $d$ from each other, then  there is a set $Z$ of at most $f(n)$ vertices of $G$ such that
    $G-B_{G}(Z,cd)$ contains no $H$ minor.
\end{conjecture}

\section*{Acknowledgements}

We thank Raphael Jacobs for suggesting to consider $K$-fat $n$-path-connected sets in the proof of \cref{lem:NoFatGrids}.
We also thank Robert Hickingbotham for helpful comments on the paper, and in particular for pointing out that the weak coarse Menger conjecture holds for graphs of bounded tree-width.
We also thank Marthe Bonamy and Romain Bourneuf for helpful discussions leading us to \Cref{main:ErdosPosaPlanar}.

\printbibliography

@article{GP23,
    title={Graph minors and metric spaces}, 
    author={Georgakopoulos, Agelos and Papasoglu, Panos},
    journal = {Combinatorica},
    volume = {45},
    number = {33},
    year = {2025}
}

@book{Bibel,
	author = {Diestel, Reinhard},
	publisher = {Springer-Verlag},
	title = {Graph Theory \emph{(6th edition)}},
	year = 2024}

@article{FatK4,
    author = {Albrechtsen, Sandra and Jacobs, Raphael W. and Knappe, Paul and Wollan, Paul},
    title = {A structural characterisation of graphs quasi-isometric to {$K_4$}-minor-free graphs},
    journal = {Combinatorica},
    volume = {45},
    number = {61},
    year = {2025}
}

@unpublished{SmallCounterexFatMinorConj,
    author = {Albrechtsen, Sandra and Distel, Marc and Georgakopoulos, Agelos},
    title = {Small counterexamples to the fat minor conjecture},
    note = {preprint},
    eprinttype = {arXiv},
    eprint = {2601.05761},
    year = {2026}
}

@unpublished{NewCounterexToCoarseMenger,
    author = {Nguyen, Tung and Scott, Alex and Seymour, Paul},
    title = {Asymptotic structure. IV. A counterexample to the weak coarse Menger conjecture},
    note = {preprint},
    eprinttype = {arXiv},
    eprint = {2508.14332},
    year = {2025}
}

@unpublished{AS5,
    title={Asymptotic structure. V. The coarse Menger conjecture in bounded path-width},
  author={Nguyen, Tung and Scott, Alex and Seymour, Paul},
  note = {preprint},
    eprinttype = {arXiv},
    eprint = {2509.08762},
    year = {2025}
}

@unpublished{AS6,
title={Asymptotic structure. VI. {D}istant paths across a disc},
  author={Nguyen, Tung and Scott, Alex and Seymour, Paul},
  note = {preprint},
    eprinttype = {arXiv},
    eprint = {2509.07174},
    year = {2025}
}

@unpublished{CounterexAgelosPanosConjecture,
    author={Davies, James and Hickingbotham, Robert and Illingworth, Freddie and McCarty, Rose},
    title={Fat minors cannot be thinned (by quasi-isometries)},
    year={2024},
    note={preprint},
    eprinttype={arXiv},
    eprint={2405.09383}
}

@article{FatK23Minor,
    shorthand={CDN$^+$12},
    author={Chepoi, Victor and Dragan, Feodor and Newman, Ilan and Rabinovich, Yuro and Vaxes, Yann},
    title={Constant approximation algorithms for embedding graph metrics into trees and outerplanar graphs},
    journal={Discrete \& Computational Geometry},
    volume={47},
    pages={187–214},
    year={2012}
}

@article{robertson1986graph,
  title={Graph minors. V. Excluding a planar graph},
  author={Robertson, Neil and Seymour, Paul},
  journal={Journal of Combinatorial Theory, Series B},
  volume={41},
  number={1},
  pages={92--114},
  year={1986},
  publisher={Elsevier}
}

@article{robertson1983graph,
  title={Graph minors. I. Excluding a forest},
  author={Robertson, Neil and Seymour, Paul},
  journal={Journal of Combinatorial Theory, Series B},
  volume={35},
  number={1},
  pages={39--61},
  year={1983},
  publisher={Elsevier}
}

@unpublished{nguyenasymptotic3,
  title={Asymptotic structure. III. Excluding a fat tree},
  author={Nguyen, Tung and Scott, Alex and Seymour, Paul},
  note = {preprint},
    eprinttype = {arXiv},
    eprint = {2509.09035},
    year = {2025}
}

@unpublished{nguyen2025coarse,
  title={Asymptotic structure. I. Coarse tree-width},
  author={Nguyen, Tung and Scott, Alex and Seymour, Paul},
  note={preprint},
  eprint={2501.09839},
  eprinttype={arXiv},
  year={2025}
}

@unpublished{abrishami2025coarse,
  shorthand={ACK$^+$25},
  title={On coarse tree decompositions and coarse balanced separators},
  author={Abrishami, Tara and Czy{\.z}ewska, Jadwiga and Kluk, Kacper and Pilipczuk, Marcin and Pilipczuk, Micha{\l} and Rz{\k{a}}{\.z}ewski, Pawe{\l}},
  note={preprint},
  eprinttype={arXiv},
  eprint={2502.20182},
  year={2025}
}

@unpublished{chudnovsky2025coarse,
  title={Coarse Balanced Separators and Tree-Decompositions},
  author={Chudnovsky, Maria and Hickingbotham, Robert},
  note={preprint},
  eprint={2505.06550},
  eprinttype={arXiv},
  year={2025}
}

@unpublished{hickingbotham2025graphs,
  title={Graphs quasi-isometric to graphs with bounded treewidth},
  author={Hickingbotham, Robert},
  note={preprint},
  eprint={2501.10840},
  eprinttype={arXiv},
  year={2025}
}

@article{berger2024bounded,
  title={Bounded-diameter tree-decompositions},
  author={Berger, Eli and Seymour, Paul},
  journal={Combinatorica},
  volume={44},
  number={3},
  pages={659--674},
  year={2024},
  publisher={Springer}
}

@article{nguyen2025counterexample,
  title={A counterexample to the coarse Menger conjecture},
  author={Nguyen, Tung and Scott, Alex and Seymour, Paul},
  journal={Journal of Combinatorial Theory, Series B},
  volume={173},
  pages={68--82},
  year={2025},
  publisher={Elsevier}
}

@article{albrechtsen2024menger,
  shorthand={AHJ$^+$24},
  title={A Menger-type theorem for two induced paths},
  author={Albrechtsen, Sandra and Huynh, Tony and Jacobs, Raphael W and Knappe, Paul and Wollan, Paul},
  journal={SIAM Journal on Discrete Mathematics},
  volume={38},
  number={2},
  pages={1438--1450},
  year={2024},
  publisher={SIAM}
}

@article{korhonen2023grid,
  title={Grid induced minor theorem for graphs of small degree},
  author={Korhonen, Tuukka},
  journal={Journal of Combinatorial Theory, Series B},
  volume={160},
  pages={206--214},
  year={2023},
  publisher={Elsevier}
}

@article{robertson2003graph,
  title={Graph minors. XVI. Excluding a non-planar graph},
  author={Robertson, Neil and Seymour, Paul},
  journal={Journal of Combinatorial Theory, Series B},
  volume={89},
  number={1},
  pages={43--76},
  year={2003},
  publisher={Elsevier}
}

@article{bonamy2023asymptotic,
  shorthand={BBE$^+$23},
  title={Asymptotic dimension of minor-closed families and Assouad--Nagata dimension of surfaces},
  author={Bonamy, Marthe and Bousquet, Nicolas and Esperet, Louis and Groenland, Carla and Liu, Chun-Hung and Pirot, Fran{\c{c}}ois and Scott, Alexander},
  journal={Journal of the European Mathematical Society},
  volume={26},
  number={10},
  pages={3739--3791},
  year={2023}
}

@article{ST1993GraphSearching,
	author = {Seymour, Paul AND Thomas, Robin},
	date-added = {2013-03-23 20:26:22 +0000},
	date-modified = {2013-03-23 20:29:30 +0000},
	journal = {Journal of Combinatorial Theory, Series B},
	number = {1},
	pages = {22--33},
	title = {Graph Searching and a Min-Max Theorem for Tree-Width},
	volume = {58},
	year = {1993}}

@article{gyarfas1970helly,
  title={A Helly-type problem in trees},
  author={Gy{\'a}rf{\'a}s, Andr{\'a}s and Lehel, Jen{\"o}},
  journal={Combinatorial Theory and its applications},
  volume={4},
  pages={571--584},
  year={1970},
  publisher={North-Holland, Amsterdam}
}

@article{AGHKInducedErdosPosa,
    author = {Ahn, Jungho and Gollin, J. Pascal and Huynh, Tony and Kwon, O-joung },
    title = {A coarse Erd\H{o}s-P\'osa Theorem},
    journal = {Proceedings of the 2025 Annual ACM-SIAM Symposium on Discrete Algorithms (SODA)},
    pages = {3363 - 3381},
    year = {2025}
}

@unpublished{DJMMErdosPosalong,
    author = {Dujmovi\'{c}, Vida and Joret, Gwenaël and Micek, Piotr and Morin, Pat},
    title = {Erd\H{o}s-P\'osa property of cycles that are far apart},
    note = {preprint},
    eprinttype = {arXiv},
    eprint = {2412.13893},
    year = {2024}
}

@article{posa1965independent,
  title={On independent circuits contained in a graph},
  author={Erd\H{o}s, Paul and P{\'o}sa, Lajos},
  journal={Canadian Journal of Mathematics},
  volume={17},
  pages={347--352},
  year={1965},
  publisher={Cambridge University Press}
}

\end{document}